\author{Kathleen Smith}
\title{Connectivity and Convexity Properties of the Momentum Map for Group Actions on Hilbert Manifolds}
\begin{document}

\newcommand{\real}{\mathbb{R}}
\newcommand{\complex}{\mathbb{C}}

\newtheorem{thm}{Theorem}[section]
\newtheorem{cor}[thm]{Corollary}
\newtheorem{lem}[thm]{Lemma}
\newtheorem{claim}[thm]{Claim}
\newtheorem{prop}[thm]{Proposition}
\newtheorem{fact}[thm]{Fact}
\newtheorem{note}[thm]{Definition}
\newtheorem{eg}[thm]{Example}

\theoremstyle{remark}
\newtheorem{rem}[thm]{Remark}
\newtheorem{notation}[thm]{Notation}
\newtheorem*{thma}{Theorem 5.4.5.}
\newtheorem*{thma'}{Theorem 5.4.4.}
\newtheorem*{thmb}{Theorem 4.3.5.}

\newcommand{\abs}[1]{\lvert#1\rvert}

\begin{preliminary}

\maketitle

\begin{abstract}

In the early $1980$s a landmark result was obtained by Atiyah and independently Guillemin and Sternberg:  the image of the momentum map for a torus action on a compact symplectic manifold is a convex polyhedron.  Atiyah's proof makes use of the fact that level sets of the momentum map are connected.  These proofs work in the setting of finite-dimensional compact symplectic manifolds.  One can ask how these results generalize.  A well-known example of an infinite-dimensional symplectic manifold with a finite-dimensional torus action is the based loop group.  Atiyah and Pressley proved convexity for this example, but not connectedness of level sets.  A proof of connectedness of level sets for the based loop group was provided by Harada, Holm, Jeffrey and Mare in $2006$.

In this thesis we study Hilbert manifolds equipped with a strong symplectic structure and a finite-dimensional group action preserving the strong symplectic structure.  We prove connectedness of regular generic level sets of the momentum map.  We use this to prove convexity of the image of the momentum map.

\end{abstract}

\begin{acknowledgements}

I would like to thank my supervisors, Lisa Jeffrey and Yael Karshon, for their guidance and patience over the years.
 I would also like to thank Paul Selick for many helpful discussions and advice.

\end{acknowledgements}

\tableofcontents

\end{preliminary}

\chapter{Introduction}

	In the early $1980$s Atiyah \cite{MA82} and independently and simultaneously Guillemin and Sternberg \cite{GS82} arrived at a now famous finite-dimensional abelian convexity result.  Their result is:
	
	\begin{thm}[Atiyah-Guillemin-Sternberg]
		\label{finite convexity}
		
			Let $(M, \omega)$ be a compact connected symplectic manifold.  Let $T$ be an $n-$torus and let $\lambda \colon T \times M \rightarrow M$ be a Hamiltonian action of $T$ on $M$ with momentum mapping $\mu \colon M \rightarrow \mathfrak{t}^*$.  Let $M^T$ denote the fixed point set of $\lambda$.  Then
		\begin{list}{}{}
	\item{(i)} the image $\mu(M^T)$ is a finite subset of $\mathfrak{t}^*$;
	\item{(ii)} $\mu(M)$ is the convex hull of $\mu(M^T)$.
\end{list}

\noindent In particular the image $\mu(M)$ is a convex polyhedron.
	\end{thm}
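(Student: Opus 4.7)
The plan is to follow Atiyah's original argument, whose two pillars are a Morse-theoretic connectedness result for fibers of $\mu$ and the equivariant Darboux normal form for Hamiltonian torus actions near a fixed point. The overall structure is induction on $\dim T$: the case $\dim T = 1$ is handled directly by Morse theory, and the general case is bootstrapped from it by restricting attention to one-parameter subgroups of $T$.

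The key step is to establish that every nonempty fiber $\mu^{-1}(a)$ is connected. To this end I would verify that for any $\xi \in \mathfrak{t}$, the component $\mu^\xi := \langle \mu, \xi \rangle \colon M \to \real$ is a Morse-Bott function whose critical set is $M^{T_\xi}$, where $T_\xi = \overline{\exp(\real \xi)}$, and whose critical submanifolds all have even index and even coindex. This can be read off from the equivariant Darboux theorem: in a linearizing chart at a fixed point the tangent space splits as a sum of two-dimensional weight spaces, and $\mu^\xi$ is a sum of quadratic forms $\frac{1}{2}\alpha(\xi)(x_i^2 + y_i^2)$ whose Hessian eigenspaces are therefore even-dimensional. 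Choosing $\xi$ so that $T_\xi = T$, one then invokes the standard lemma that a Morse-Bott function on a compact connected manifold with no critical submanifold of index or coindex $1$ has connected level sets; this gives connectedness of every fiber of $\mu^\xi$ for such generic $\xi$. An induction on $\dim T$, using that an arbitrary fiber of $\mu$ is a nested intersection of fibers of one-parameter components and that the restriction of $\mu$ to such a fiber is again a momentum map for a subtorus action (after symplectic reduction), upgrades this to connectedness of every $\mu^{-1}(a)$.

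For (i), at each $p \in M^T$ all fundamental vector fields of $\mathfrak{t}$ vanish, so $d\mu_p = 0$; thus $\mu$ is locally constant on each component of $M^T$, and since $M^T$ is a closed submanifold of the compact manifold $M$ it has finitely many components, giving $\abs{\mu(M^T)} < \infty$. For (ii), the equivariant Darboux normal form shows that near any $p$ the image $\mu(U)$ is a neighbourhood of $\mu(p)$ inside a convex polyhedral cone determined by the weights of the isotropy representation on $T_pM$; in particular $\mu(M)$ is compact and locally convex. Combined with connectedness of the fibers of $\mu$ (which rules out the only way a locally convex, connected subset of $\mathfrak{t}^*$ can fail to be convex), this forces $\mu(M)$ to be convex. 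Since a compact convex set is the convex hull of its extreme points, and the local cone at $p$ is a proper cone (hence $\mu(p)$ can be extreme) only when $p$ is fixed by all of $T$, the extreme points of $\mu(M)$ lie in $\mu(M^T)$, proving $\mu(M) = \mathrm{conv}(\mu(M^T))$.

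The main obstacle is the Morse-theoretic connectedness lemma and its inductive extension to every fiber of $\mu$: it rests crucially on compactness of $M$ and on standard finite-dimensional Morse-Bott theory, and it is precisely these ingredients that will have to be replaced in the Hilbert-manifold setting that forms the body of this thesis.
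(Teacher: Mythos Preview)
The paper does not prove this theorem; it is stated in the Introduction as the classical Atiyah--Guillemin--Sternberg result, cited from \cite{MA82} and \cite{GS82}, and serves only as motivation for the infinite-dimensional analogues (Theorems~\ref{connectedness} and~\ref{convexity}) that constitute the thesis. There is therefore no proof in the paper to compare your proposal against.

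Your sketch follows Atiyah's route (connectedness of fibers $\Rightarrow$ convexity), which is exactly the strategy the paper describes and then generalizes. One point is worth flagging: the thesis explicitly identifies a gap in Atiyah's original inductive connectedness argument, namely the passage from connectedness of \emph{regular} fibers to connectedness of \emph{all} fibers ``by continuity,'' and gives a counterexample (Example~1.0.9, the map $h\colon S^2\to S^1$, $(x_1,x_2,x_3)\mapsto e^{i\pi x_3}$, whose singular fiber is two points). Your sentence ``an induction on $\dim T$ \ldots\ upgrades this to connectedness of every $\mu^{-1}(a)$'' glosses over precisely this step; a complete argument requires the Lerman--Tolman fix \cite{LT97} or an equivalent device.
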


	Atiyah's proof of Theorem \ref{finite convexity} makes use of the following connectivity result:  Under the same hypotheses as Theorem \ref{finite convexity},
	
	\begin{thm}
		\label{finite connectivity}
		
		For every $c \in \mathfrak{t}^*$, the level $\mu^{-1}(c)$ is connected (or empty).
	\end{thm}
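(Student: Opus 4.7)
The plan is to argue by induction on $n = \dim T$, using Morse--Bott theory on the components of the momentum map. The main technical input, which I would prove first, is that for every $\xi \in \mathfrak{t}$ the function $\mu^\xi := \langle \mu, \xi \rangle : M \to \real$ is a Morse--Bott function whose critical submanifolds all have even index and coindex. Since the Hamiltonian vector field of $\mu^\xi$ is the fundamental vector field $\xi_M$, its critical set coincides with the fixed point set $M^{T_\xi}$ of the closed subgroup $T_\xi = \overline{\exp(\real\xi)} \subseteq T$. Linearizing the $T_\xi$-action near a fixed point by means of an invariant $\omega$-compatible almost complex structure identifies a neighborhood with a unitary $T_\xi$-representation, so the Hessian of $\mu^\xi$ splits into rank-two weight blocks and its positive and negative eigenspaces are complex subspaces, giving the even parity.

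The base case $n=1$ is then the classical Morse--Bott statement: a Morse--Bott function on a compact connected manifold with no critical submanifold of index or coindex $1$ has connected level sets. The mechanism is that crossing a critical value changes sublevel sets only by attaching even-index cells, which cannot disconnect a connected sublevel set or its boundary, so connectedness of regular level sets (which is clear locally and propagates by compactness) extends to all levels.

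For the inductive step, choose a rational splitting $T = T' \times S^1$ with corresponding decomposition $\mu = (\mu', \mu_n)$. Given $c = (c', c_n)$, the induction hypothesis gives that $(\mu')^{-1}(c')$ is connected. If $c'$ is a regular value of $\mu'$, then the reduced space $M_{c'} := (\mu')^{-1}(c')/T'$ is a compact connected symplectic manifold (or orbifold if the action is only locally free), and $\mu_n$, which is $T'$-invariant by the Poisson-commutativity of the components of $\mu$, descends to a Hamiltonian $\tilde\mu_n$ for the residual $S^1$-action on $M_{c'}$. Applying the base case to $\tilde\mu_n$ shows that $\tilde\mu_n^{-1}(c_n) = \mu^{-1}(c)/T'$ is connected, which is equivalent to connectedness of $\mu^{-1}(c)$.

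The main obstacle is the case of singular $c'$, where $(\mu')^{-1}(c')$ fails to be a manifold and the reduction argument breaks down. I would handle this by a limiting argument: pick a sequence of regular values $c'_k \to c'$, obtain connected fibers $\mu^{-1}(c'_k, c_n)$ from the previous step, and use compactness of $M$ together with properness of $\mu$ to pass to a Hausdorff limit and conclude connectedness of $\mu^{-1}(c)$. Verifying that connectedness is preserved under this limit --- rather than producing, say, two components joined only by a thin bridge that collapses --- is the subtlest point, and would require using the local normal form of $\mu$ near the singular fiber to rule out such pathologies.
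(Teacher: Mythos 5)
Your even-index lemma and your base case are sound, and your inductive step at regular values of $\mu'$ (via symplectic reduction and the residual $S^1$-action) is a legitimate variant of the standard argument — Atiyah instead restricts $\mu_{n}$ to the connected level set $(\mu')^{-1}(c')$ and runs Morse theory there directly, which avoids the orbifold technicalities of reduction, but both routes work at regular values.

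The genuine gap is exactly where you suspect it: the passage from regular to singular values of $\mu'$. The limiting argument you propose — take regular $c'_k \to c'$, note each $\mu^{-1}(c'_k, c_n)$ is connected, and pass to a Hausdorff limit — is false as stated, and no amount of compactness or properness rescues it. Connectedness of all nearby regular fibers does not imply connectedness of a singular fiber: consider $h \colon S^2 \rightarrow S^1$, $(x_1,x_2,x_3) \mapsto e^{i\pi x_3}$. Every regular level set is a connected circle, yet the level set over $-1$ is the two-point set $\{(0,0,1),(0,0,-1)\}$. The Hausdorff limit of the nearby connected circles is the whole fiber $\{x_3 = \pm 1\}$ only as a set-limit; the "thin bridge that collapses" pathology you worry about is precisely what happens, and it cannot be ruled out by soft topology. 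This is, historically, the gap in Atiyah's own published argument (his appeal to "continuity" at singular values), and closing it is not a routine verification: it requires the Marle--Guillemin--Sternberg local normal form for the momentum map near points with nontrivial stabilizer, from which one shows that $\mu$ is locally an open map onto its image with locally connected fibers, and then deduces global connectedness of singular fibers from the regular ones. This is the content of Lerman--Tolman's resolution, and in your write-up it is deferred to a single sentence. As it stands, your proof establishes connectedness only for $c$ whose projection $c'$ is a regular value at every stage of the induction, i.e., for a dense (residual) set of levels rather than for all of them.
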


	He deduces Theorem \ref{finite convexity} from Theorem \ref{finite connectivity}.

	Over the last $30$ years there has been considerable interest in various infinite-dimensional Hamiltonian systems, namely, infinite-dimensional symplectic manifolds equipped with actions of finite-dimensional tori.  For example, Atiyah in \cite{MA82} asked whether Theorem \ref{finite convexity} could be extended in any interesting way to infinite-dimensions.  Atiyah and Pressley \cite{AP83}
 answered this question in the affirmative.  They proved an extension of Theorem \ref{finite convexity} for the based loop group, an infinite-dimensional symplectic manifold, with a finite-dimensional torus action.  Before we state this result more precisely we need the following definitions.
 
 Let $G$ be a compact, connected and simply connected Lie group.  Fix a $G$-invariant inner product on the Lie algebra $\mathfrak{g}$.  The \textbf{loop group} is defined as the set of maps from $S^1$ to $G$ that are of Sobolev class $H^1$.  We will denote the loop group by $M_1$.  So $$M_1 = H^1(S^1, G).$$  The subset $\Omega G$ of $M_1$ consisting of those loops $f \colon S^1 \rightarrow G$ for which $f(1)$ is the identity element in $G$ is called the \textbf{based loop group}.  We refer the reader to Chapter $6$ for more details regarding the loop group and the based loop group.
 
 Atiyah and Pressley in \cite{AP83} prove:  
 \begin{thm}
 	\label{loop group convexity}
 	
 	Let $G$ be a compact, connected and simply connected Lie group with maximal torus $T$.  Let $\Omega G$ be the based loop group.  Let $R := T \times S^1$ act on $\Omega G$ where
 	
 	\begin{list}{}{}
 		\item{(i)} the rotation group $S^1$ acts on $\Omega G$  by ``rotating the loop": \\ 
 		if $\gamma \in \Omega G$ and $e^{i \theta} \in S^1$, $\theta \in [0, 2 \pi]$, then $\left( e^{i \theta} \gamma \right)(s) := \gamma(s+ \theta) \gamma(\theta)^{-1}$, and;
 		\item{(ii)} the maximal torus acts on $\Omega G$ by conjugation: \\ 
 		if  $\gamma \in \Omega G$ and $t \in T$, then $(t \gamma)(s) := t \gamma(s) t^{-1}$.
 	\end{list}		
 		\noindent Note that these actions commute.  Then the image of the momentum map is convex and it is the convex hull of the images of the fixed points.
 
 \end{thm}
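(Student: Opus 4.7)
The plan is to derive Theorem \ref{loop group convexity} as an application of the general Hilbert manifold machinery developed later in the thesis, namely by verifying the hypotheses of the abstract connectivity and convexity results (Theorems 4.3.5 and 5.4.5 referenced in the preamble) for the based loop group, rather than by reworking the Morse-theoretic arguments of Atiyah--Pressley directly.

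First I would carefully set up the geometry. Equip $\Omega G$ with its standard Hilbert manifold structure modelled on $H^1(S^1, \mathfrak{g})$ and with the Atiyah--Pressley strong symplectic form $\omega$ built from the $G$-invariant inner product on $\mathfrak{g}$. Check directly that both the rotation action of $S^1$ and the pointwise conjugation action of $T$ preserve $\omega$, and compute the momentum map $\mu \colon \Omega G \to \mathfrak{r}^*$ for $R = T \times S^1$. The $S^1$-component is (up to an additive constant) the energy functional $E(\gamma) = \tfrac{1}{2}\int_0^{2\pi} |\gamma^{-1}\gamma'(s)|^2 \, ds$, while the $T$-component comes from projecting the standard loop-group momentum map into $\mathfrak{t}^*$. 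Next, identify the fixed-point set: being $T$-fixed under conjugation forces $\gamma$ to take values in $T$, and being $S^1$-fixed under rotation then forces $\gamma$ to be a group homomorphism $S^1 \to T$. Hence $(\Omega G)^R$ is the discrete lattice $\mathrm{Hom}(S^1, T) \cong \pi_1(T)$, on which $\mu$ takes explicit quadratic-in-lattice values computable directly from the momentum map formulas.

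The second step is to check that $\mu$ satisfies the hypotheses of the general theorems. The essential ingredient is a properness-type condition, which in this setting reduces to the classical fact that $E$ has compact sublevel sets on $\Omega G$ (via the compact embedding $H^1 \hookrightarrow C^0$ combined with control of $\gamma$ by $\gamma^{-1}\gamma'$ under the basing condition). Invoking Theorem 4.3.5 to get connectedness of regular generic level sets, and then Theorem 5.4.5, yields that $\mu(\Omega G)$ is convex. To upgrade convexity to the stronger statement that $\mu(\Omega G)$ equals the convex hull of $\mu((\Omega G)^R)$, I would identify the extreme points of the image as images of fixed components, using the same local-to-global extremal argument that underlies the Atiyah--Guillemin--Sternberg proof in the abstract framework.

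The main obstacle I anticipate is the properness and Fredholm-type condition required by the abstract theorem: $\mu$ is not proper on the basis of its $T$-component alone, since a loop is not controlled by its conjugation momentum, so one must show that the energy component $E$ compensates in such a way that the whole of $\mu$ is as well-behaved as the abstract theorem requires. A secondary subtlety is that $(\Omega G)^R$ and its image are infinite, so the convex hull is an unbounded polyhedral region rather than a compact polytope; this needs to be reconciled with the precise formulation of ``convex hull'' in the general theorem, likely by decomposing $\Omega G$ according to the connected components indexed by $\pi_1(G \backslash T)$ or by a direct asymptotic analysis of $\mu$ at infinity along the energy direction.
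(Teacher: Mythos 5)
Your overall strategy --- verify the hypotheses of the abstract Connectivity and Convexity Theorems for $\Omega G$ --- is exactly what the thesis does in Chapter $6$, but only for the statement ``the image of $\mu$ is convex.'' The paper explicitly does \emph{not} prove Theorem \ref{loop group convexity} in full: it quotes the result from Atiyah--Pressley \cite{AP83} and remarks that their detailed proof is specific to the based loop group. Two points in your proposal are genuine gaps. First, the claim that $E$ has compact sublevel sets on $\Omega G$ is false: sublevel sets of $E$ are bounded in $H^1$, hence precompact in $C^0$ by the compact embedding, but they are not compact in the $H^1$ topology of the Hilbert manifold $\Omega G$ (no infinite-dimensional Hilbert manifold is locally compact). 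The hypothesis the general theorems actually require is the Palais--Smale Condition (C), together with one-sided boundedness, for every component $\mu^{\xi}$ with $\xi$ off the hyperplane $H = \{x_1 = 0\}$ (i.e.\ every component with a nonzero energy coefficient); the paper verifies this by citing \cite[Proposition $2.9$]{HHJM06}, whose proof in turn relies on \cite{CLT89}. This is not a soft ``properness'' reduction but the substantive analytic input. You should also note that the Convexity Theorem additionally assumes $\mu(M)$ is closed, which must be checked for $\Omega G$ and is not automatic (the thesis itself flags closedness as an open issue in general).

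Second, the abstract Convexity Theorem delivers only that $\mu(\Omega G)$ is convex; it does not deliver the stronger conclusion that the image equals the convex hull of $\mu\bigl((\Omega G)^R\bigr)$. Your final sentence about ``the same local-to-global extremal argument that underlies the Atiyah--Guillemin--Sternberg proof'' is a placeholder, not an argument, and no such identification of extreme points with fixed-point images is carried out anywhere in the thesis machinery (unlike in the compact finite-dimensional case, where Atiyah's Theorem \ref{finite convexity}(i)--(ii) includes it). For that half of the statement the paper relies entirely on Atiyah--Pressley's loop-group--specific proof, and your proposal as written does not close that gap.
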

 
 \begin{rem}
 	Atiyah points out that the requirement that $G$ be simply connected may be weakened to semi-simple.  Notice that $\Omega G$ then has several connected components. In this case the image of each component of $\Omega G$ is a convex polyhedron; it is the convex hull of the images corresponding to the fixed points in that particular component.
 \end{rem}
 
 	We will not go into the very detailed proof of Theorem \ref{loop group convexity} which is \underline{specific} to this example of the based loop group.  	We do nevertheless note that Atiyah and Pressley in \cite{AP83} remark that their Theorem \ref{loop group convexity} could be proved by extending the method of proof of Theorem \ref{finite convexity} so as to cover their infinite-dimensional situation.  They do not carry out this argument nor do they provide any hints on what might be required to do so.
 	
 	In $2006$ in \cite{HHJM06}, Harada, Holm, Jeffrey, and Mare proved infinite-dimensional analogues (with respect to the based loop group $\Omega G$ example of Atiyah \cite{AP83}) of the well-known Theorem \ref{finite connectivity} result in finite-dimensional symplectic geometry.  Before we can recall these specific results we need another definition.
 	
 	The set $\Omega_{\textrm{alg}}$, the \textbf{algebraic based loop group}, is the subset of the based loop group $\Omega G$ consisting of loops which have a finite Fourier series (when $G$ is identified with a group of matrices).
 	
 	The main results of \cite{HHJM06} that we are concerned with are:
 	
 \begin{thm}
 	\label{lisa 1}
 	
 	Any level set of the momentum map $\mu$ of the $T \times S^1$ action restricted to $\Omega_{\textrm{alg}}$ is connected (for regular or singular values of the momentum map).
 \end{thm}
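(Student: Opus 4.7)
The plan is to reduce the infinite-dimensional connectedness question to the finite-dimensional Theorem~\ref{finite connectivity} by exhausting $\Omega_{\textrm{alg}}$ by a nested sequence of finite-dimensional, compact, $R$-invariant subvarieties.

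First I would introduce a filtration $\Omega_{\textrm{alg}} = \bigcup_{N \ge 0} X_N$ in which each $X_N$ is finite-dimensional, compact and $R$-invariant; a natural choice is the union of the Bruhat--Schubert subvarieties of length at most $N$, or equivalently the set of algebraic loops of polynomial degree at most $N$ after a matrix embedding of $G$. Each $X_N$ is a (generally singular) closed subvariety of $\Omega G$ inheriting the $R$-action, and the restriction $\mu_N := \mu|_{X_N}$ is a momentum map for this action. The inclusions $X_0 \subset X_1 \subset \cdots$ are $R$-equivariant with $\bigcup_N X_N = \Omega_{\textrm{alg}}$, so in particular $\mu^{-1}(c) = \bigcup_N \mu_N^{-1}(c)$ for every $c$.

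Next I would prove connectedness of each level set $\mu_N^{-1}(c)$. Since $X_N$ is in general singular, I would work with a Bott--Samelson desingularization $\pi_N \colon \widetilde{X}_N \to X_N$: a smooth, compact, connected symplectic manifold carrying an $R$-action making $\pi_N$ equivariant, surjective and with connected fibres, and whose momentum map is $\mu_N \circ \pi_N$. Applying Theorem~\ref{finite connectivity} to $\widetilde{X}_N$ yields connectedness of $(\mu_N \circ \pi_N)^{-1}(c)$, and because $\pi_N^{-1}(\mu_N^{-1}(c)) = (\mu_N \circ \pi_N)^{-1}(c)$ with $\pi_N$ surjective and of connected fibres, connectedness descends to $\mu_N^{-1}(c)$. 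Finally, the nested family $(\mu_N^{-1}(c))_N$ of connected sets stabilizes to be nonempty once $N$ is large enough (otherwise $\mu^{-1}(c)$ itself is empty), and a nested union of nonempty connected sets is connected; so $\mu^{-1}(c) = \bigcup_N \mu_N^{-1}(c)$ is connected.

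The main obstacle will be the second step: one must either construct the Bott--Samelson resolutions together with a compatible $R$-equivariant symplectic form whose momentum map pulls back to $\mu_N \circ \pi_N$, and verify that the fibres of $\pi_N$ are connected, or else extend Atiyah's Morse-theoretic argument directly to the singular $X_N$. The latter route requires a perfect Morse--Bott function (a generic component of $\mu_N$) on a singular stratified space, which is precisely what makes the inclusion of singular values $c$ in the statement delicate; it is here that one must do genuine work beyond the finite-dimensional smooth AGS-type argument.
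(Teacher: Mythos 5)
The theorem you are proving is not actually proved in this thesis: it is quoted from Harada--Holm--Jeffrey--Mare \cite{HHJM06}, and the thesis only records the statement (together with the remark that the relevant topology on $\Omega_{\textrm{alg}}$ is the direct limit topology induced by the Grassmannian model). So there is no internal proof to compare against, only the strategy of the cited source. Your overall architecture does match that strategy: exhaust $\Omega_{\textrm{alg}}$ by the nested finite-dimensional, compact, $R$-invariant projective subvarieties $X_N$ coming from the Bruhat/Grassmannian filtration, prove connectedness of each $\mu_N^{-1}(c)$, and conclude via the increasing union of nested connected sets (an argument that is valid in the direct limit topology, since the pieces are nested and eventually all contain a common nonempty piece).

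The genuine gap is the step you yourself flag as the main obstacle, and it is not a technicality that can be deferred. A Bott--Samelson (or any) resolution $\pi_N \colon \widetilde{X}_N \to X_N$ does not produce a compact symplectic manifold whose momentum map is $\mu_N \circ \pi_N$: the pullback $\pi_N^*\omega$ of the K\"ahler form is degenerate along the exceptional locus, so $(\widetilde{X}_N, \pi_N^*\omega)$ is not symplectic, and Theorem \ref{finite connectivity} cannot be applied to it. If instead you put a genuine K\"ahler form on $\widetilde{X}_N$ (e.g.\ by perturbing with classes supported on the exceptional divisor), the associated momentum map no longer factors through $\mu_N$, so connectedness upstairs does not descend to the level sets of $\mu_N$. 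The correct repair --- and what \cite{HHJM06} actually does --- is to avoid desingularizing and instead invoke a connectivity theorem for momentum maps on \emph{singular} irreducible projective varieties: each $X_N$ sits inside a finite-dimensional Grassmannian as a $T \times S^1$-invariant irreducible closed subvariety of a smooth projective Hamiltonian $T$-space, and for such subvarieties the level sets of the restricted momentum map are connected for all values, regular or singular. Until you either prove or correctly cite such a statement for singular invariant subvarieties, the middle step of your argument is missing, and with it the whole proof.
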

 	
 	\begin{thm}
 		\label{lisa 2 regular}
 	
	Let $\mu$ be the momentum map for the $T \times S^1$ action on $\Omega G$.  The level set $\mu^{-1}(c)$ of the momentum map is connected, provided that $c$ is a regular value.
 	\end{thm}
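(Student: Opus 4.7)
The plan is to deduce Theorem \ref{lisa 2 regular} from Theorem \ref{lisa 1} by exploiting the density of $\Omega_{\textrm{alg}}$ in $\Omega G$. Since $c$ is a regular value of the momentum map $\mu : \Omega G \to \mathfrak{t}^* \oplus \real$, the implicit function theorem for Hilbert manifolds tells us that $\mu^{-1}(c)$ is a smooth, closed Hilbert submanifold of $\Omega G$ of codimension $\dim T + 1$; in particular, it is locally path-connected. This local regularity is what will allow us to transport connectivity from the algebraic subgroup back up to all of $\Omega G$.

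The crucial intermediate step is to establish that $\mu^{-1}(c) \cap \Omega_{\textrm{alg}}$ is dense in $\mu^{-1}(c)$. Fix $p \in \mu^{-1}(c)$ and an open neighborhood $U$ of $p$ in $\Omega G$. Because $\Omega_{\textrm{alg}}$ is dense in $\Omega G$, we can first approximate $p$ by an algebraic loop $q \in U \cap \Omega_{\textrm{alg}}$, so that $\mu(q)$ lies close to $c$. Regularity of $c$ furnishes a finite-dimensional subspace of tangent directions at $p$, chosen to consist of algebraic loops (e.g.\ directions supported in finitely many Fourier modes), on which $d\mu_p$ is surjective onto $\mathfrak{t}^* \oplus \real$. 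An inverse function theorem argument along these algebraic directions then produces a small correction that pushes $q$ back exactly onto $\mu^{-1}(c)$ while keeping it algebraic, yielding a point of $\mu^{-1}(c) \cap \Omega_{\textrm{alg}}$ arbitrarily close to $p$.

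Once density is in hand, the proof is completed by a standard topological argument. Suppose for contradiction that $\mu^{-1}(c) = A \sqcup B$, with $A$ and $B$ disjoint, nonempty, and relatively open in $\mu^{-1}(c)$. By the density claim, both $A \cap \Omega_{\textrm{alg}}$ and $B \cap \Omega_{\textrm{alg}}$ are nonempty, and they form a relatively open, disjoint decomposition of $\mu^{-1}(c) \cap \Omega_{\textrm{alg}}$ whose union is the whole set. This directly contradicts Theorem \ref{lisa 1}, and so $\mu^{-1}(c)$ must be connected.

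The main obstacle, and the step where most of the work lies, is the density claim. The subtlety is that the finite-dimensional correction must be engineered to remain inside $\Omega_{\textrm{alg}}$: one must verify that the surjectivity of $d\mu_p$ can already be realized by directions that are themselves algebraic loops, so that the inverse function theorem applied along those directions yields an algebraic loop on the level set. This requires a careful Fourier-mode analysis of the tangent space $T_p \Omega G$, together with quantitative estimates ensuring that the inverse function theorem's open neighborhood is large enough to absorb the approximation error introduced by truncating $p$ to an algebraic loop. Making all of this compatible with the $H^1$ topology on $\Omega G$ is the technical crux.
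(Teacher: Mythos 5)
You should first be aware that the thesis does not prove this statement: Theorem \ref{lisa 2 regular} is quoted from Harada--Holm--Jeffrey--Mare \cite{HHJM06}, whose argument is Morse-theoretic (Condition (C) for the components $\mu^{\xi}$ with respect to the $H^1$ metric, evenness of the indices of their critical points, and an Atiyah-style connectedness argument carried out directly on $\Omega G$). Your proposal --- deduce the theorem from Theorem \ref{lisa 1} by showing that $\mu^{-1}(c)\cap\Omega_{\textrm{alg}}$ is dense in $\mu^{-1}(c)$ --- is therefore a genuinely different route. Its purely topological part is sound: if $\mu^{-1}(c)$ split into two nonempty relatively open pieces, a dense subset would inherit that separation, and since the direct limit topology on $\Omega_{\textrm{alg}}$ refines the subspace topology, this would contradict Theorem \ref{lisa 1}. (You do not actually need local path-connectedness of $\mu^{-1}(c)$ for that step; density alone suffices.)

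The gap is the density claim, and the mechanism you sketch for it does not work. The set $\Omega_{\textrm{alg}}$ is not a Hilbert submanifold of $\Omega G$; it is an increasing union of finite-dimensional strata, so there is no inverse function theorem available ``along algebraic directions, staying inside $\Omega_{\textrm{alg}}$.'' Concretely, a tangent direction supported in finitely many Fourier modes does not integrate to a curve of algebraic loops: for $G=SU(2)$ and $\xi(\theta)=(\cos\theta)X$ with $X\in\mathfrak{su}(2)$, the loops $\theta\mapsto\exp\left(t\cos\theta\,X\right)$ have matrix entries $\cos(t\cos\theta)$ and $\sin(t\cos\theta)$, which are not finite Fourier series; hence the correction produced by your inverse function theorem step has no reason to land in $\Omega_{\textrm{alg}}$. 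To run the argument inside a fixed stratum $\Omega^{(N)}_{\textrm{alg}}$ you would instead need surjectivity of $d\mu$ restricted to the tangent space of that stratum at the approximating point $q$, which is a different and unproven statement --- regularity of $c$ for $\mu$ on all of $\Omega G$ says nothing about its restriction to a stratum --- and you would separately need to show that $\mu^{-1}(c)\cap\Omega_{\textrm{alg}}$ is nonempty whenever $\mu^{-1}(c)$ is. As written, the density claim carries essentially the entire content of the theorem and is not established.
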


 \begin{rem}
The space $\Omega G$, being a Hilbert manifold, in particular has a topology.  Theorem \ref{lisa 2 regular} refers to the topology of $\Omega G$ as a Hilbert manifold.   The subset $\Omega_{\textrm{alg}}$ of $\Omega G$ can also be equipped with a topology.  Theorem \ref{lisa 1} refers to the direct  limit topology on $\Omega_{alg}$.  We direct the reader to Chapter $6$ for further details.  
	\end{rem}

	\begin{rem}
	The extra hypothesis that $c$ be a regular value of the momentum map in Theorem \ref{lisa 2 regular} is needed so that Morse-theoretic arguments in infinite-dimensions can be used in the proof.  In later years, Mare in \cite{ALM10} was able to eliminate the regular value hypothesis for the momentum map $\mu$.  Mare proved that the singular level sets of $\mu$ for the $T \times S^1$ action on $\Omega G$ are connected.  His argument works for the space of $C^{\infty}$ loops and also for the space of loops of Sobolev class $H^s$ for any $ s \geq 1$.	\end{rem}

\subsection{Thesis Outline}

The main results of this thesis are infinite-dimensional analogues of well-known connectedness and convexity results in finite-dimensional symplectic geometry.  Namely, we establish an analogue of Theorem \ref{finite convexity} and Theorem \ref{finite connectivity}.  We prove:

\begin{thma'}[Connectivity Theorem]
		
	Let $M$ be a connected strongly symplectic Hilbert manifold.  Suppose that we have an almost periodic $\mathbb{R}^n$ action on $M$ with momentum map $\mu : M \rightarrow \mathbb{R}^n$.  Suppose that the $\mathbb{R}^{n}$ action has isolated fixed points.  Suppose that there exists a complete invariant Riemannian metric on $M$ such that there exists a hyperplane $H$ of $\mathbb{R}^n$ such that for all $\xi \in \mathbb{R}^n \smallsetminus H$ the map  $\mu^{\xi} \colon \textrm{M} \rightarrow \mathbb{R}$ is bounded from one side and satisfies Condition (C) (See section \ref{Condition (C)}).  Then the momentum mapping $\mu$ satisfies
	
	\begin{list}{}{}
	\item[($A$)] The set $\{ c \in \mathbb{R}^n \hspace{2mm} | \hspace{2mm} c \textrm{ is a regular value of $\mu$ and } \mu^{-1}(c) \textrm{ is connected } \} \subseteq \mathbb{R}^n$ is residual.
	\end{list}
	
\end{thma'}

\begin{thma}[Convexity Theorem]
	
	Let $M$ be a connected strongly symplectic Hilbert manifold.  Suppose that we have an almost periodic $\mathbb{R}^n$ action on $M$ with momentum map $\mu : M \rightarrow \mathbb{R}^n$.  Suppose that the $\mathbb{R}^{n}$ action has isolated fixed points and suppose that $\mu(M)$ is closed.  Suppose that there exists a complete invariant Riemannian metric on $M$ such that there exists a hyperplane $H$ of $\mathbb{R}^n$ such that for all $\xi \in \mathbb{R}^n \smallsetminus H$ the map  $\mu^{\xi} \colon \textrm{M} \rightarrow \mathbb{R}$ is bounded from one side and satisfies Condition (C).  Then the momentum mapping $\mu$ satisfies
	
	\begin{list}{}{}
	
	\item[($B$)] the image $\mu(M)$ is convex.
	\end{list}
	
\end{thma}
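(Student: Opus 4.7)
The plan is to deduce convexity of $\mu(M)$ from the Connectivity Theorem (Theorem 5.4.4) together with the closedness hypothesis, following the classical strategy that Atiyah used in finite dimensions. The overall idea is to show that for each affine line $\ell \subset \mathbb{R}^n$, the intersection $\mu(M) \cap \ell$ is a connected subset of $\ell$; since $\mu(M)$ is closed, such a set is then a closed interval, and so for any $a, b \in \mu(M)$ the segment $[a,b]$ lies in $\mu(M)$, giving convexity.

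To establish connectedness of $\mu(M) \cap \ell$, I would pass to the preimage in $M$. Write $\ell = \{c \in \mathbb{R}^n : \langle \xi_i, c \rangle = c_i \text{ for } i = 1, \dots, n-1\}$ for linear functionals $\xi_1, \ldots, \xi_{n-1}$ spanning the annihilator of the direction of $\ell$, chosen outside the exceptional hyperplane $H$. Then $\mu^{-1}(\ell) = \bigcap_{i=1}^{n-1} (\mu^{\xi_i})^{-1}(c_i)$, so if this joint level set is connected in $M$, its image $\mu(\mu^{-1}(\ell)) = \mu(M) \cap \ell$ is a connected subset of $\ell$. This reduces the problem to connectedness of joint level sets of $n-1$ component functions.

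Connectedness of such joint level sets would be proved by an iterated Morse-theoretic argument, parallel to the proof of the Connectivity Theorem. For generic $\xi_1 \notin H$, the function $\mu^{\xi_1}$ is bounded from one side and satisfies Condition (C); its critical points are the $\mathbb{R}^n$-fixed points (by almost periodicity) and are thus isolated, and standard arguments using a compatible invariant almost complex structure give even index and coindex, so regular level sets of $\mu^{\xi_1}$ are connected smooth submanifolds. One then restricts the remaining $\mu^{\xi_i}$ to such a level set, obtaining an induced momentum-map system for an $\mathbb{R}^{n-1}$-action on a Hilbert submanifold, and iterates. After $n-1$ steps, one obtains connectedness of $\mu^{-1}(\ell)$ itself. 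A density/perturbation argument together with closedness of $\mu(M)$ then extends the conclusion from generic affine lines (those whose defining functionals lie in the residual set of the Connectivity Theorem) to arbitrary affine lines.

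The main obstacle is the inductive step. After restricting to a regular level set of $\mu^{\xi_1}$, one must verify that the induced system on the resulting Hilbert submanifold still satisfies the standing hypotheses: completeness of the induced invariant Riemannian metric, Condition (C) and one-sided boundedness of the restricted component functions, and, most delicately, isolated fixed points for the induced $\mathbb{R}^{n-1}$-action. In general the fixed-point set of a proper subaction is larger than that of the full $\mathbb{R}^n$-action and may be positive-dimensional, so either a Morse--Bott reformulation has to be carried throughout the iteration, or one must work only with sufficiently generic directions and regular values and appeal to closedness of $\mu(M)$ to fill in the remaining segments by a limiting argument. Controlling the infinite-dimensional analysis through the iteration, rather than the deduction of convexity per se, is the true technical heart of the proof.
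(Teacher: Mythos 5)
Your overall strategy --- reduce convexity to connectedness of $\mu^{-1}(\ell)$ for affine lines $\ell$, observe that $\mu(\mu^{-1}(\ell)) = \mu(M)\cap\ell$ is then an interval, and use genericity plus closedness of $\mu(M)$ to pass from generic lines to arbitrary segments --- is exactly the strategy of the paper (Lemma \ref{a_n B_n}, following McDuff--Salamon): there one picks an injective $A\in\mathbb{R}^{(n+1)\times n}$ whose transpose is a ``good projection,'' notes that $\mu_A^{-1}(c') = \{x \mid \mu(x)-\mu(x_0')\in\ker(A^T)\}$ is precisely the preimage of an affine line, concludes that the segment between $\mu(x_0')$ and $\mu(x_1')$ lies in $\mu(M)$ whenever that level set is connected, and handles arbitrary points by approximation and closedness of the image.

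There is, however, a genuine gap at the step you yourself flag as the ``technical heart.'' You propose to prove connectedness of $\mu^{-1}(\ell)$ by iteratively restricting to regular level sets of the $\mu^{\xi_i}$ and treating the result as an induced momentum-map system on a Hilbert submanifold, and you correctly observe that you cannot verify the standing hypotheses (isolated fixed points of the sub-action, completeness, Condition (C)) through that iteration. The paper does not do this in the convexity argument at all. The Connectivity Theorem is formulated as a statement $(A_k)$ quantified over \emph{every} connected strongly symplectic Hilbert manifold and \emph{every} almost periodic $\mathbb{R}^k$ action satisfying the hypotheses, so it is applied directly to the projected momentum map $\mu_A = A^T\circ\mu$ on $M$ itself; no restriction to submanifolds occurs in the convexity deduction (that restriction lives inside the proof of the Connectivity Theorem, where it is handled by a transversality argument for the critical set). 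The hypotheses for $\mu_A$ are supplied by the Good Projections Lemma \ref{good projection}: for a generic projection, $\ker(A^T)$ is orthogonal to some direction $\theta$ with rationally independent components, and Lemma \ref{rationally independent} then forces $\mathrm{Fix}(\mu_A) = M^T$, so the fixed points of the sub-action remain isolated; one-sided boundedness and Condition (C) for the components of $\mu_A$ are inherited because they are components of $\mu$ lying off the hyperplane $H$. Your worry that the fixed-point set of a proper sub-action may be positive-dimensional is thus answered by genericity of the projection, not by a Morse--Bott reformulation; without this ingredient (or an equivalent) your argument does not close.
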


Note that the Palais-Smale compactness condition, namely Condition (C) (see section \ref{Condition (C)}), is an important hypothesis for our connectedness and convexity theorems, Theorems \ref{connectedness} , \ref{convexity}.  Condition (C) is a ``compactness condition" on real-valued functions of class $C^1$ defined on a Riemannian manifold modelled upon a Hilbert space.  It is needed in order to extend Morse theory to our infinite-dimensional setting.

\noindent Let us now highlight the contents of each chapter in this thesis and, where appropriate, briefly explain how the respective material contributes to the main thesis results, the Convexity Theorem \ref{convexity}.

Chapter $2$, Background and Preliminaries, provides a basic review of relevant known facts and definitions from the theory of differential topology.  Throughout this thesis our manifold $M$ will always be a Hausdorff, paracompact Hilbert manifold modelled on a real separable Hilbert space.  That is, $M$ is equipped with an equivalence class of smooth (meaning $C^{\infty}$) atlases such that all charts take values in an infinite-dimensional separable real Hilbert space.

The purpose of Chapter $3$, Normal Forms, is to extend the existing theory on local normal forms for Hamiltonian group actions to infinite-dimensional Banach manifolds.  More specifically, we formalize the local linearization theorem for compact group actions on Banach manifolds (Theorem \ref{locallinearization}) originally noted by Weinstein (without proof) in \cite{AL69}.  We also establish a symplectic version of this local linearization theorem (Theorem \ref{locallinearizationsymplectic}).  In so doing, we provide a $G$-equivariant version of Moser's argument (Lemma \ref{moserthm}) suitable for our goal.  It is the symplectic version of the local linearization theorem that is needed later in the thesis to help prove Theorem \ref{even index} which is an infinite-dimensional analogue of a lemma of Atiyah \cite[Lemma $2.2$]{MA82} and Guillemin and Sternberg \cite[Theorem $5.3$]{GS82}.  

Chapter $4$, Connectedness - The Base Case, introduces the notion of what it means for a Riemannian metric on a Hilbert manifold $M$ to be standard near each critical point of a smooth real-valued function on $M$.  Suppose that we are given a complete Riemannian metric $g$ on a Hilbert manifold $M$ and let $f \colon M \rightarrow \mathbb{R}$ be a smooth function.  For $g$ to be standard (near each critical point $p$ of $f$) means that $g$ coincides with some Riemannian metric on $M$ whose gradient vector field is standard near each $p$.  
For a complete and precise definition see Definition \ref{standard def} and the subsequent Remark \ref{std rephrased}.  With this ``standard" hypothesis on the Riemannian metric we are able to provide an alternate proof of the known Global (Un) Stable Manifold Theorem, Theorem \ref{W^s submfld}, which tells us that the stable and unstable sets of $p$ are in fact manifolds.  However, the main feature of Chapter $4$ is the Connected Levels Theorem, Theorem \ref{connectedlevels}:

\begin{thmb}[Connected Levels]
			
		Let $M$ be a connected Hilbert manifold and let $f \colon M \rightarrow \mathbb{R}$ be a Morse function that is bounded from below and none of whose critical points have index or coindex equal to $1$.  Suppose that there exists a complete Riemannian metric on $M$ such that $f$ satisfies Condition $(C)$.  Then the level set $f^{-1}(c) \subset M$ is connected for every $c$ in $\mathbb{R}$.  
		
\end{thmb}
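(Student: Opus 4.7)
The plan is to adapt the classical finite-dimensional Morse-theoretic proof to the Hilbert setting, using Condition $(C)$ in place of compactness of sublevel sets. I would simultaneously track the connectedness of the sublevel sets $M_c := f^{-1}((-\infty,c])$ and of the level sets $f^{-1}(c)$, and show by induction along the critical values that both are connected (or empty) for every $c \in \mathbb{R}$.

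The opening move is to prove that, under these hypotheses, $f$ admits a \emph{unique} local minimum. Existence of a global minimum follows from $f$ being bounded below together with Condition $(C)$ on the complete Riemannian manifold $M$. Uniqueness follows from a mountain-pass argument: two distinct strict local minima $p_1 \neq p_2$ would produce, via a minimax over paths from $p_1$ to $p_2$ combined with a deformation lemma for pseudo-gradient vector fields (both available under Condition $(C)$), a nondegenerate critical point of Morse index exactly $1$, contradicting the hypothesis. Let $p_0$ be this unique minimum and set $c_0 = f(p_0)$.

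The bulk of the argument is the Morse-theoretic induction. For $c < c_0$ both $M_c$ and $f^{-1}(c)$ are empty; for $c$ slightly above $c_0$, the Hilbert-space form of the Morse lemma (cf.\ the normal forms in Chapter~3) identifies a neighborhood of $p_0$ with a model $f = c_0 + \|y\|^2$, so $M_c$ is a disk and $f^{-1}(c)$ is a sphere in the tangent Hilbert space; since the coindex of $p_0$ is not $1$, the coindex is $\geq 2$ or infinite, and this sphere is connected. As $c$ increases through a regular interval, the downward gradient flow (using completeness and Condition $(C)$) gives a diffeomorphism between level sets and a deformation retract between sublevel sets, preserving connectedness. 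The crucial step is crossing a critical value $c_*$: at each critical point $p_i$ at level $c_*$ of index $k_i$ and coindex $\ell_i$, the Hilbert Morse lemma gives the local model $f = c_* - \|x\|^2 + \|y\|^2$, and a pseudo-gradient flow produced from the standard metric realizes the transition from $M_{c_*-\epsilon}$ to $M_{c_*+\epsilon}$ as attaching a handle of index $k_i$, and the transition on level sets as a local surgery replacing $S^{k_i-1} \times D^{\ell_i}$ by $D^{k_i} \times S^{\ell_i-1}$. Because $k_i, \ell_i \in \{0\} \cup \{2, 3, \ldots, \infty\}$, direct inspection of these models shows: cell attachments with $k_i \geq 2$ do not change $\pi_0$ of $M_c$ (the attaching sphere is connected); surgeries with both $k_i, \ell_i \geq 2$ do not change $\pi_0$ of the level set; and the only way a new component of $M_c$ can appear is through a critical point of index $0$, which by the first step happens only at $p_0$. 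Closedness of the ``good set'' of $c$ under increasing limits follows from $M_c = \bigcap_{c' > c} M_{c'}$ together with Condition $(C)$ applied to flow lines.

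The main obstacle is making the local handle-attachment and level-set surgery descriptions rigorous in the infinite-dimensional setting. In finite dimensions one uses compactness of the critical set at each level to build a well-behaved pseudo-gradient; here Condition $(C)$ plays that role, ensuring compactness of the critical set at each level and that flow lines do not escape to infinity, while the standardness of the metric near each critical point (Chapter~4) is needed to guarantee that the gradient flow actually has the expected local normal form so that the local surgery diagrams assemble into the correct global statement. A secondary subtlety arises when $\ell_i = \infty$, where the ``model level sphere'' is an infinite-dimensional Hilbert sphere: one must observe that such a sphere is connected (indeed contractible), so that no spurious disconnection is introduced during the surgery. Once these infinite-dimensional bookkeeping issues are dispatched, the combinatorial verification that attaching cells of dimension $\neq 1$ and performing surgeries with $k_i, \ell_i \neq 1$ preserves $\pi_0$ is elementary, completing the induction.
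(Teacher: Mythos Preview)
Your approach is correct but genuinely different from the paper's. You run the classical Morse-theoretic induction along critical values, tracking $\pi_0$ of sublevel sets and level sets through handle attachments and surgeries; the hypothesis that no index or coindex equals $1$ is exactly what makes each surgery $\pi_0$-preserving, and Condition $(C)$ supplies the compactness of critical sets at each level needed to make this rigorous in the Hilbert setting. The paper instead follows a flow-based argument in the spirit of Bryant: after modifying the metric so that the negative gradient is standard near each critical point (Lemma~\ref{condition C preserved}), it shows that the stable manifold $M_0 = W^s(p_0)$ of the unique index-zero critical point has complement a locally finite union of submanifolds of codimension $\ge 2$ (Lemma~\ref{complement}); given two regular points in a level $f^{-1}(c)$, it perturbs them into $M_0$, flows both down by the normalized gradient flow into a controlled Morse neighbourhood $U_0$ of $p_0$, connects them there by a path chosen transverse to the finitely many unstable manifolds below level $c$ (each of codimension $\ge 2$ since coindex $\ne 1$), and then flows that path back up to $f^{-1}(c)$. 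Critical points in a level are handled by the local Morse model (the cone picture) to reach a nearby regular point first.

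Your mountain-pass argument for uniqueness of the minimum and the surgery bookkeeping give a self-contained inductive proof that also yields connectedness of sublevel sets along the way; the paper's argument is more global and avoids the detailed handle analysis by exploiting the single open dense stable manifold $M_0$ directly. Both routes rest on the same two ingredients---Condition $(C)$ to control the flow and the codimension-$\ge 2$ condition on (un)stable manifolds coming from index, coindex $\ne 1$---but deploy them differently.
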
	

\noindent This result is interesting in its own right.  Its proof relies on Morse theoretic arguments that follow from the fact that there exists a complete Riemannian metric on $M$ for which $f$ satisfies the Palais Smale Condition (C) and such that the negative gradient field of $f$ is standard near each critical point of $f$.  Notice that Theorem \ref{connectedlevels} establishes the connectivity of \underline{all} level sets of $f$.  The $n=1$ case of the Connectivity Theorem, Theorem \ref{convexity}, will follow from Theorem \ref{connectedlevels}; details of this $n=1$ claim are provided in the next chapter within the proof of Theorem \ref{convexity}.

Chapter $5$, Convexity and Connectedness, defines one of the main ingredients in the Connectivity and Convexity Theorems.  Specifically, the chapter begins by defining what is meant by an almost periodic $\mathbb{R}^n$ action on a Hilbert manifold $M$.  See Definitions \ref{almost periodic R} and \ref{almost periodic}.  The reader may think of an almost periodic $\mathbb{R}^n$ action as a generalization of a torus action.  We prove that in the presence of an almost periodic $\mathbb{R}^n$ action on $M$, the set of singular values of the resulting momentum map is contained in a countable union of hyperplanes (Theorem \ref{reg values residual}).  (In particular, the set of regular values of the momentum map is residual in $\mathbb{R}^n$.)  Then, the chapter ends with the statement and proof of the thesis main results, the Connectivity Theorem \ref{connectedness} and Convexity Theorem \ref{convexity}.  Following the method of Atiyah \cite{MA82}, the Connectivity Theorem is established by induction on the dimension of the almost periodic $\mathbb{R}^n$ action on $M$.  Note that in the finite-dimensional convexity result, Theorem \ref{finite convexity}, Guillemin and Sternberg prove convexity but not through connectedness (see \cite{GS82}).  They do not provide any results for connectedness.  Atiyah proves convexity using connectedness (see \cite{MA82}) but there is a gap in his argument for connectedness.  This occurs in his induction step where he claims that the connectedness of the regular level sets of the momentum map implies that all level sets of the momentum map are connected  \textit{by continuity}.  A nice example to illustrate the problem is provided below. 
	
	\begin{eg}
	Let $h \colon S^2 \rightarrow S^1$ be the map that sends  $(x_1, x_2, x_3) \mapsto e^{i \pi x_3}$.  This map has exactly one singular value (at $x_3=-1$).  All the regular level sets are connected; they are circles.  But the singular level set above $-1$, namely $\{(0,0,1), \hspace{1mm} (0,0,-1) \}$, is disconnected.  See Figure \ref{fig:example}
	
	\begin{figure}[htpb]
	\begin{center}
	\includegraphics{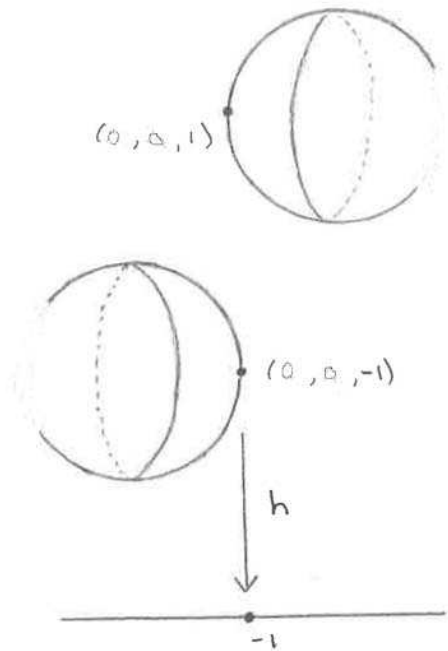}
	\end{center}
	\caption{Singular level set of $h$ above $-1$ for example $1.0.9$. }
	\label{fig:example}
	\end{figure}

	\end{eg}
	
	\noindent This \textit{by continuity} matter was resolved by Lerman and Tolman in \cite[sections $\S 4$ and $\S 5$]{LT97}.

Lastly, Chapter $6$ illustrates that the Convexity Theorem reproduces known infinite-dimensional convexity results for a significant example (see \cite{HHJM06}, \cite{AP83}).  Namely, it reproduces the connectivity and convexity results with regards to the based loop group.

\chapter{Background and Preliminaries}

This chapter consists of two parts.  We review a selection of well known results and some standard definitions from the theory of differentiable manifolds, differential topology and point set topology.  As well, we declare some notational conventions.

The material of these sections borrows from many sources.  We use Lang \cite{SL01}, Palais \cite{RP63} and Royden \cite{HLR88} for basic foundational results.
\section{Function-Analytic Preliminaries}		
		
	Let $M$ be a Hausdorff, paracompact Hilbert manifold modelled on a real separable Hilbert space $ \left( \mathbb{H}, \langle \cdot, \cdot \rangle \right)$.  That is, $M$ is equipped with an equivalence class of $C^{\infty}$ atlases such that all charts take values in a separable real Hilbert space $\mathbb{H}$.  
	
	Recall that a smooth \textbf{vector field}, say $X$, on $M$ is a smooth cross-section of the tangent bundle $TM$, i.e., a smooth map $X \colon M \rightarrow TM$ such that $\pi \circ X = id$.
	
\begin{note}
\label{smooth riemannian str}
	Let $M$ be a Hilbert manifold.  
	
	For each $x \in M$ a \textbf{strongly nondegenerate inner product} $g_x$ on $T_xM$ is a positive-definite, symmetric, bilinear form $$g_x( \cdot , \cdot ) \colon T_xM \times T_xM \rightarrow \mathbb{R}$$  \noindent such that the norm $\Vert \cdot \Vert_x = g_x( \cdot , \cdot )^{\frac{1}{2}}$ defines the topology of $T_{x}M$. Moreover, we require that $g_ x$ determine a bounded, invertible operator $T_xM \rightarrow  (T_xM)^*$ with bounded inverse. 
	
	For each point in $M$ there exists a neighbourhood $D \subseteq M$ and a chart with target a Hilbert space. 	Let $\phi$ be a chart in $M$ having as target a Hilbert space $( \mathbb{H} \hspace{1mm}, \hspace{1mm} \left\langle \cdot , \cdot \right\rangle )$ such that the following holds:  for each $x \in D$ we define the operator $G(x) \colon \mathbb{H} \rightarrow \mathbb{H}$ as follows:  Identify $T_xM$ with $\mathbb{H}$ by the Hilbert space isomorphism $$ \left( d \phi |_x \right)^{-1} \colon \mathbb{H} \rightarrow T_xM.$$ Then $$ \left\langle G(x)u,v \right\rangle = g \left( \left( d \phi |_x \right)^{-1} (u) , \left( d \phi |_x \right)^{-1} (v) \right) \textrm{ for all } u,v \in \mathbb{H}.$$ \noindent Thus $x \mapsto G(x)$ is a map from $D$ to the space of positive definite symmetric bounded operators on $\mathbb{H}$ with the operator norm.  If we require the map $x \mapsto G(x)$ to be smooth with respect to the operator topology (it follows that $x \mapsto G^{-1}(x)$ is also smooth) then we call $x \mapsto g_x( \cdot , \cdot )$ a \textbf{(smooth) Riemannian metric } (or \textbf{(smooth) Riemannian structure}) on $M$. 
	
	A (strong) \textbf{Riemannian manifold} $(M,g)$ is a manifold $M$ equipped with a smooth Riemannian metric $g$.
	
\end{note}	

\noindent Note that we require a strong Riemannian metric on $M$.  Fix one such metric on $M$.  For each $x \in M$, we will denote by $\langle \cdot , \cdot \rangle_x$ the inner product in the tangent space $T_{x}M$.  

\begin{rem}
	\label{same topology}
	Note that the topology given by the smooth Riemannian metric is the given topology of $M$ (see \cite[pg. $311$]{RP63}).
\end{rem}

		Let $f \colon M \rightarrow \mathbb{R}$ be a smooth function on $M$.  
		Then $df \colon TM \rightarrow \mathbb{R}$,  the differential of $f$, is a cross-section of the cotangent bundle, $T^*M$, of $M$.  Hence, there is a uniquely determined vector field $\nabla f \colon M \rightarrow TM$, the \textbf{gradient of $f$}, such that $df_x(v) = \langle v, \nabla f(x) \rangle_x$ for all $x \in M$, $ v \in T_xM$.   
		
		The reader should note that $\nabla f$ will play a central role throughout this thesis.
		
		Recall that a \textbf{critical point of $f$} is a point $x \in M$ such that $df_x \colon T_xM \rightarrow \mathbb{R}$ satisfies $df_x = 0$, equivalently where $\nabla f_x$ vanishes.  Throughout this thesis let us denote the set of critical points of $f$ by $Crit(f)$, i.e., $$Crit(f) := \{ x \in M \hspace{2mm} | \hspace{2mm} df_x = 0 \}.$$  If $df_x \neq 0$ then the point $x \in M$ is called a \textbf{regular point of f}.  Let $c \in \mathbb{R}$.  
		If the level set $f^{-1}(c)$ consists only of regular points of $f$ then $c$ is a \textbf{regular value of $f$}.  If the level set $f^{-1}(c)$ contains at least one critical point of $f$ then we say that $c$ is a \textbf{critical value of $f$}.

\begin{note}
\label{Hessian}
	
	At a critical point $p$ of $f$ there is a uniquely determined continuous bilinear form $H_p(f) \colon T_{p}M \times T_{p}M \rightarrow \mathbb{R}$, the \textbf{Hessian of f at p}, such that if $\phi$ is any chart around $p$ $$H_{p}(f)(u, v) = d^{2}(f \circ \phi^{-1}) \left( d \phi |_p (u), d \phi |_p (v) \right),$$ where $d^2$ is defined below. 
\end{note}

\begin{rem}
\label{Hessian remark}
	\begin{enumerate}
		\item Suppose that $h$ is a continuously differentiable mapping of an open set $W$ of a Hilbert space $E$ into $\mathbb{R}$.  Then $dh$ is a continuous mapping of $W$ into the Hilbert space  $\mathcal{L}(E; \mathbb{R})$.  If that mapping is differentiable at a point $x \in W$, recall that $h$ is \textbf{twice differentiable} at $x$, and the derivative of $dh$ at $x$ is called the \textbf{second derivative} of $h$ at $x$, and written $d^2h |_x$.  This is an element of $\mathcal{L}(E; \mathcal{L}(E; \mathbb{R}))$.  We make the canonical identification of  $\mathcal{L}(E; \mathcal{L}(E; \mathbb{R}))$ with the space $\mathcal{L}(E \times E; \mathbb{R}))$ of continuous bilinear mappings of $E \times E$ into $\mathbb{R}$:  we recall that this is done by identifying $u \in \mathcal{L}(E; \mathcal{L}(E; \mathbb{R}))$ with the bilinear mapping $(s,t) \rightarrow (u \cdot s) \cdot t$.

		\item Note that the Hessian quadratic form in Definition \ref{Hessian} is independent of the choice of chart $\phi$.  Moreover, $H_p(f)$ determines a \underline{bounded} operator $A \colon T_pM \rightarrow T_pM$ by $$H_{p}(f)(u,v) = \langle Au, v \rangle_p$$  Because $H_p(f)$ is symmetric, the operator $A$ is self-adjoint.

		In what follows, we choose a smooth Riemannian metric and then identify $H_p(f)$ with the operator $A$.  The interpretation will be clear from the context.  
	\end{enumerate}
\end{rem}
	The critical point $p$ is called (strongly) \textbf{nondegenerate} if $A$ is invertible with bounded inverse.  Henceforth, we \textit{assume that $f$ has only nondegenerate critical points}.  

\begin{note}
	Let $p \in Crit(f)$.  The \textbf{index} (\textbf{coindex}) of $p$ is the index (coindex) of the Hessian $H_p(f)$, i.e., the supremum of the dimensions of all linear spaces where $H_p(f)$ is negative (positive) definite.  We shall denote the index of $p$ by $index_p(f)$ and the coindex by $coindex_p(f)$.
\end{note}
	 
\begin{eg}
	Let $\mathbb{H}$ be a Hilbert space and let $\mathbb{H}_{\pm} \subset \mathbb{H}$ be closed subspaces such that $\mathbb{H} = \mathbb{H}_{+} \oplus \mathbb{H}_{-}$.  Let $x := (x_+, x_- ) \in \mathbb{H}$ and let $f^{\mathbb{H}} \colon \mathbb{H} \rightarrow \mathbb{R}$ be a smooth function defined by $f^{\mathbb{H}}(x) = ||x_+||^2 - ||x_-||^2$.  For $p=0 \in Crit(f^{\mathbb{H}})$ we see that $\textrm{index}_p(f^{\mathbb{H}}) = \dim(\mathbb{H}_-)$ and $\textrm{coindex}_p(f^{\mathbb{H}}) = \dim(\mathbb{H}_+)$.
\end{eg}

\section{Two Important Theorems}
\medskip

\subsection{Baire Category Theorem}

\begin{note}  Let $M$ be a topological space.  A set $E \subset M$ is said to be \textbf{nowhere dense} if $\left( \overline{E} \right)^{\circ} = \emptyset$, i.e., $\overline{E}$ has empty interior.
\end{note}

Notice that $E$ is nowhere dense is equivalent to $$M = \left( \left( \overline{E} \right)^{\circ} \right)^c = \overline{ \left( \left( E^c \right)^{\circ} \right)}.$$
\noindent That is to say that $E$ is nowhere dense if and only if $E^c$ has dense interior.

\begin{thm}[Baire Category Theorem]
	\label{Baire}
	Let $M$ be a complete metric space.
	\begin{list}{}{}
	\item[(i)] If $\{ V_n \}_{n=1}^{\infty}$ is a sequence of dense open sets, then $\displaystyle \cap_{n=1}^{\infty} V_n$ is dense in $M$.
	\item[(ii)] If $\{ E_n \}_{n=1}^{\infty}$ is a sequence of nowhere dense sets, then $M \neq \displaystyle \cup_{n=1}^{\infty} E_n$.
	\end{list}	
\end{thm}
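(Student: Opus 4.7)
The plan is to establish (i) directly by a nested-closed-ball argument that exploits completeness of $M$, and then to deduce (ii) from (i) by taking complements. For (i), I would show that any nonempty open set $U \subseteq M$ meets $\bigcap_{n=1}^{\infty} V_n$. Since $V_1$ is dense and open, $U \cap V_1$ is a nonempty open set, so I can pick $x_1 \in U \cap V_1$ and a radius $0 < r_1 < 1$ with $\overline{B(x_1, r_1)} \subseteq U \cap V_1$. Inductively, having produced $x_k$ and $r_k < 1/k$, I would use density of $V_{k+1}$ in the open ball $B(x_k, r_k)$ to pick $x_{k+1} \in B(x_k, r_k) \cap V_{k+1}$ and then $0 < r_{k+1} < 1/(k+1)$ with $\overline{B(x_{k+1}, r_{k+1})} \subseteq B(x_k, r_k) \cap V_{k+1}$.

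Since $r_k \to 0$, the sequence $(x_k)$ is Cauchy in $M$. Completeness produces a limit $x \in M$, and because $x_j \in \overline{B(x_k, r_k)}$ for every $j \geq k$ and each such closed ball is closed, the limit satisfies $x \in \overline{B(x_k, r_k)} \subseteq V_k$ for every $k$, and also $x \in U$. This exhibits a point of $U \cap \bigcap_n V_n$, proving density.

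For (ii), I would set $V_n := \bigl( \overline{E_n} \bigr)^c$. Each $V_n$ is open by construction, and the nowhere-dense hypothesis that $\overline{E_n}$ has empty interior is equivalent (as observed in the remark preceding the theorem statement) to $V_n$ being dense in $M$. Applying (i), the intersection $\bigcap_n V_n = \bigl( \bigcup_n \overline{E_n} \bigr)^c$ is dense, hence nonempty, which forces $\bigcup_n E_n \subseteq \bigcup_n \overline{E_n} \neq M$.

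The only technical point requiring care is the inductive choice of radii in (i): at each step the closed ball $\overline{B(x_{k+1}, r_{k+1})}$ must sit strictly inside the previous \emph{open} ball $B(x_k, r_k)$, so I must shrink the radius slightly past what density and openness alone supply. This is where metric-space (rather than merely topological) structure enters, and it is what guarantees that the Cauchy limit constructed from completeness actually lies in every member of the nested chain simultaneously.
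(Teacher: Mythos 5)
Your proof is correct: it is the classical nested-closed-ball argument for the Baire Category Theorem, with the one genuinely delicate point (shrinking each radius so that $\overline{B(x_{k+1},r_{k+1})}$ lands inside the previous \emph{open} ball, so the Cauchy limit survives into every $V_k$) handled properly, and part (ii) deduced from part (i) by complementation exactly as the remark preceding the statement suggests. Note that the thesis does not actually prove this theorem --- it is quoted as standard background (from Royden et al.) --- so there is no in-paper argument to compare against; your write-up supplies the standard proof that the paper implicitly relies on.
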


\begin{note}
 A subset $E \subset M$ is of \textbf{first Baire category} (or is \textbf{meager}) if $$E = \bigcup_{n=1}^{\infty} E_n$$
 \noindent where each $E_n$ is nowhere dense.  A set $F$ is called \textbf{residual} if $F^c$ is of first Baire category.
\end{note}

\begin{rem} The reader should think of first Baire category as being the topological analogue of sets of measure zero (so ``small''), and residual as being the topological analogue of sets of full measure (so ``big'').
\end{rem}

Let us collect some facts about residual sets and meager sets.  Let $M$ be a complete metric space.
\begin{enumerate}
\item A set $F \subset M$ is residual if and only if $F$ contains a countable intersection of open dense sets. \\
Indeed, if $F$ is residual then there exist nowhere dense sets $\{ E_n \}$ such that 
$$F^c  = \bigcup_{n=1}^{\infty} E_n \subset \bigcup_{n=1}^{\infty} \overline{E_n}.$$
Taking complements of this equation yields $$\bigcap_{n=1}^{\infty} \left( \overline{E_n} \right)^c \subset F,$$
\noindent i.e., $F$ contains a set of the form $\cap_{n=1}^{\infty} V_n$ where each $V_n  := \left( \overline{ E_n }\right)^c$ is an open dense subset of $M$.

\item A countable union of sets of first Baire category is of first Baire category.

\item If a set is of first Baire category then any subset of this set also is of first Baire category.

\item A countable intersection of residual sets is residual.
\end{enumerate}

\begin{rem}
	\label{Baire restatement}
	
	The Baire Category Theorem \ref{Baire} may now be re-stated as follows.  If $M$ is a complete metric space, then
	\begin{list}{}{}
	\item[(i)] all residual sets are dense in $M$, and
	\item[(ii)] $M$ is not of first Baire category.
	\end{list}	
 
\end{rem}

\subsection{Existence and Uniqueness Theorem for ODEs}

Let $M$ be an infinite-dimensional Hilbert manifold modelled on a real separable Hilbert space $( \mathbb{H}, \langle \cdot , \cdot \rangle )$.  Recall that given a smooth (meaning $C^{\infty}$) map $F \colon M \rightarrow \mathbb{R}^n$, a point $x \in M$ is called a \textbf{regular point of $F$} if the linear map $dF_x \colon T_xM \rightarrow T_{F(x)}\mathbb{R}^n$ is surjective.  A point $x \in M$ is called a \textbf{singular point of $F$} if it is not regular.  A point $y \in \mathbb{R}^n$ is called a \textbf{singular value} of $F$ if at least one point $x \in F^{-1}(y)$ is a singular point of $F$ and is called a \textbf{regular value} of $F$  if every $x \in F^{-1}(y)$ is a regular point of $F$, i.e., $y \in \mathbb{R}^n$ is called a regular value of $F$ if it is not a singular value for $F$.  Note that  if $F^{-1}(y) = \emptyset$, then $y$ is considered to be a regular value of $F$ because the definition of regular value is vacuously true. By the Implicit Function Theorem (see \cite{SL01} Chapter $1$, $\S 5$ page $19$), if $x$ is a regular point of $F$ and $y=F(x)$, then there is a neighbourhood $U_x \subset M$ of $x$  such that $U_x \cap F^{-1}(y)$ is a smooth submanifold of $M$.  Thus, if $y$ is a regular value of $F$ then $F^{-1}(y)$ is a smooth submanifold of $M$. 

Recall that if $X$ is a smooth vector field on $M$ then  
a \textbf{solution curve} for $X$ is a smooth map $\sigma$ of an open interval $(a, b) \subseteq \mathbb{R}$ into $X$ such that $\sigma'(t) = \left( X \circ \sigma \right) (t)$ for all $t \in (a,b)$. If $0 \in (a, b) $ and $x := \sigma(0)$ then we call $x$ the \textbf{initial condition} of the solution $\sigma$.
 
The next theorem is commonly called the local existence and uniqueness theorem for ordinary differential equations (or vector fields).  A detailed exposition of this fundamental theorem is presented in Chapter $IV$ of \cite{SL01} or Palais \cite{RP63} $\S 2$.

\begin{thm}[Local Existence and Uniqueness for Ordinary Differential Equations]
	\label{local ODE thm}
	
	 Let $X$ be a smooth vector field on an open set $\mathscr{O}$ in a Hilbert space $\mathbb{H}$.  Given $x \in \mathscr{O}$ there is a neighbourhood $U$ of $x$ included in $\mathscr{O}$, an $\epsilon > 0$, and a smooth map $\phi \colon U \times (- \epsilon, \epsilon ) \rightarrow \mathbb{H}$ such that:
\begin{enumerate}
	\item If $x' \in U$ then the map $\sigma_{x'} \colon (-\epsilon, \epsilon ) \rightarrow \mathbb{H}$ defined by $\sigma_{x'}(t) = \phi(x',t)$ is a solution of
$X$ with initial condition $x'$;
	\item If $\sigma \colon (a, b) \rightarrow \mathbb{H}$ is a solution curve of $X$ with initial condition $x' \in U$ then $\sigma(t) = \sigma_{x'}(t)$ for all $t \in (a,b) \cap (- \epsilon, \epsilon )$.
\end{enumerate}
\end{thm}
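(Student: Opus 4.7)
The plan is to reduce the ODE to an integral equation and then apply the Banach fixed point theorem in a suitable Banach space of curves, exactly as one does in the finite-dimensional case; the passage to a Hilbert space rather than $\real^n$ costs essentially nothing because the only properties used are completeness and the supremum norm estimates. So, first I would localize: since $X$ is smooth on $\mathscr{O}$, around the given $x \in \mathscr{O}$ I can choose a closed ball $\overline{B}(x, 2r) \subset \mathscr{O}$ on which $X$ is bounded by some constant $K$ and Lipschitz with some constant $L$ (the Lipschitz bound comes from the boundedness of $dX$ on the compact closure of a smaller ball, together with the mean value inequality applied along straight line segments in $\mathbb{H}$). I would then set $U := B(x, r)$ and choose $\epsilon > 0$ small enough that $\epsilon K < r$ and $\epsilon L < 1/2$.

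Next, let $\mathcal{X}$ be the Banach space of continuous maps $\phi \colon U \times [-\epsilon, \epsilon] \rightarrow \mathbb{H}$ with the sup norm, and let $\mathcal{X}_0 \subset \mathcal{X}$ be the closed subset consisting of those $\phi$ with $\phi(x',0) = x'$ and $\|\phi(x',t) - x'\| \leq r$ for all $(x',t)$; this is a complete metric space. On $\mathcal{X}_0$ define the operator
\[
  T(\phi)(x', t) \;=\; x' + \int_0^t X\bigl(\phi(x',s)\bigr)\,ds.
\]
The choice of $\epsilon$ ensures that $T$ maps $\mathcal{X}_0$ into itself (the bound on $\|X\|$ keeps the image within $\overline{B}(x, 2r)$) and that $T$ is a contraction with constant $\epsilon L < 1/2$ (from the Lipschitz estimate under the integral). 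The unique fixed point $\phi \in \mathcal{X}_0$ is then a continuous map satisfying the integral equation, and differentiating in $t$ shows $\sigma_{x'}(t) := \phi(x',t)$ is a $C^1$ solution with initial condition $x'$, giving (1) with continuity. Uniqueness (2) follows in the standard way: if $\sigma$ is any other solution with initial condition $x' \in U$, Gr\"onwall's inequality applied to $\|\sigma(t) - \sigma_{x'}(t)\|$ on $(a,b) \cap (-\epsilon, \epsilon)$ (using the Lipschitz bound on $X$ while both curves remain in $\overline{B}(x,2r)$, a property that holds on a nonempty open subinterval containing $0$) forces the two curves to agree there, and a standard connectedness argument extends the agreement to the whole overlap.

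The main obstacle, and the only place where some care is needed beyond the finite-dimensional argument, is upgrading continuity of $\phi$ to smoothness in both variables jointly. The cleanest route is to set up the contraction instead in the Banach space of $C^k$ maps for each $k$ and verify that $T$ still contracts; alternatively, one shows by hand that the candidate derivative $D_{x'}\phi(x',t)$ must satisfy the linearized variational equation
\[
  \tfrac{d}{dt}\bigl(D_{x'}\phi(x',t)\bigr) \;=\; dX_{\phi(x',t)} \circ D_{x'}\phi(x',t), \qquad D_{x'}\phi(x',0) = \mathrm{id},
\]
which is itself a linear ODE in the Banach space $\mathcal{L}(\mathbb{H},\mathbb{H})$ and so has a unique solution by the same fixed point argument; one then checks, using Taylor remainder estimates on $X$, that this candidate is indeed the Fr\'echet derivative of $\phi$ in $x'$. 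Iterating yields $C^k$ regularity for every $k$, and hence smoothness of $\phi$. At this point all three conclusions of the theorem are in hand.
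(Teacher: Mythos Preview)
Your argument is the standard Picard--Lindel\"of contraction-mapping proof, which is exactly what the paper defers to by citing Palais \cite{RP63} \S2 and Lang \cite{SL01} Chapter~IV; the paper gives no independent proof. One small slip worth correcting: you justify the local Lipschitz bound by invoking ``the compact closure of a smaller ball,'' but in an infinite-dimensional Hilbert space closed balls are never compact. The fix is immediate---continuity of $dX$ at $x$ gives a ball on which $\|dX\|$ is bounded (say by $\|dX(x)\|+1$), and the mean value inequality then yields the Lipschitz estimate---so the structure of your argument is unaffected.
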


\begin{proof}
	See Palais \cite{RP63} $\S2$ or Lang \cite{SL01} Chapter $IV$.
\end{proof}

The next result is a consequence of Theorem \ref{local ODE thm} for vector fields.

\begin{lem}
	\label{max soln curve}
	Let $M$ be a Hilbert manifold and let $X$ be a smooth vector field on $M$.  For each $x \in M$ there exists a unique solution curve $\sigma_x$ of $X$ with initial condition $x$ such that every solution curve of $X$ with initial condition $x$ is a restriction of $\sigma_x$
\end{lem}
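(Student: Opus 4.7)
The plan is a standard bootstrapping of the local existence/uniqueness result (Theorem \ref{local ODE thm}) into a global maximal flow line through $x$. First I would establish a uniqueness lemma: if $\sigma_1 \colon (a_1, b_1) \to M$ and $\sigma_2 \colon (a_2, b_2) \to M$ are two solution curves of $X$ with $\sigma_1(0) = \sigma_2(0) = x$, then $\sigma_1 \equiv \sigma_2$ on the intersection $I := (a_1, b_1) \cap (a_2, b_2)$. Let $S := \{ t \in I \mid \sigma_1(t) = \sigma_2(t) \}$. The set $S$ is nonempty since $0 \in S$, and is closed in $I$ because $M$ is Hausdorff. To see that $S$ is open in $I$, fix $t_0 \in S$ and set $y := \sigma_1(t_0) = \sigma_2(t_0)$; choose a chart around $y$ and apply Theorem \ref{local ODE thm} at $y$ to obtain a neighbourhood $U$ of $y$ and an $\epsilon > 0$ such that any solution curve through a point of $U$ agrees with the flow $\phi$. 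Both $t \mapsto \sigma_i(t_0 + t)$ ($i=1,2$) are solution curves with initial condition $y \in U$, so they coincide on $(-\epsilon, \epsilon)$, giving a neighbourhood of $t_0$ in $S$. Since $I$ is connected, $S = I$.

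Next I would define the maximal solution. Let $\mathcal{S}_x$ be the collection of all solution curves of $X$ with initial condition $x$, and let
\[
J_x := \bigcup_{\sigma \in \mathcal{S}_x} \mathrm{dom}(\sigma).
\]
Each $\mathrm{dom}(\sigma)$ is an open interval containing $0$, so $J_x$ is an open interval containing $0$. Define $\sigma_x \colon J_x \to M$ by $\sigma_x(t) := \sigma(t)$ for any $\sigma \in \mathcal{S}_x$ whose domain contains $t$. The uniqueness lemma above guarantees that this is well defined.

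Then I would verify that $\sigma_x$ is itself a smooth solution curve of $X$ with initial condition $x$, and that it is maximal. Smoothness and the ODE $\sigma_x'(t) = (X \circ \sigma_x)(t)$ are local conditions on $J_x$: around any $t_0 \in J_x$ there is some $\sigma \in \mathcal{S}_x$ whose domain contains $t_0$, and $\sigma_x$ coincides with $\sigma$ on a neighbourhood of $t_0$, so the required properties transfer from $\sigma$ to $\sigma_x$. The initial condition $\sigma_x(0) = x$ is immediate. Finally, if $\tau \colon (a,b) \to M$ is any solution curve with initial condition $x$, then $\tau \in \mathcal{S}_x$, so $(a,b) \subseteq J_x$ and, by the uniqueness lemma applied to $\tau$ and $\sigma_x$, we have $\tau = \sigma_x |_{(a,b)}$. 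Uniqueness of $\sigma_x$ with this maximality property is an immediate consequence.

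The only genuinely nontrivial step is the uniqueness lemma, and even there the main obstacle is purely bookkeeping: Theorem \ref{local ODE thm} is stated for vector fields on open subsets of a Hilbert space, whereas here $X$ lives on a manifold, so the local argument must be pulled back via charts. Since the solutions $\sigma_i$ are continuous, this pullback is straightforward. No global completeness or flow-box hypotheses beyond the smoothness of $X$ are needed, because we make no claim that $J_x = \mathbb{R}$.
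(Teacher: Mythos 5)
Your argument is correct and is essentially the standard proof: the paper itself does not spell one out but simply cites Palais \cite{RP63} \S 6, where the same strategy (local uniqueness via a connectedness argument, then gluing all solution curves over the union of their domains) is carried out. Note that the closedness of your agreement set $S$ uses that $M$ is Hausdorff, which the paper does assume throughout, so no gap there.
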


\begin{proof}
	See Palais \cite{RP63} $\S 6$.
\end{proof}

 The solution curve $\sigma_x$ above in Lemma \ref{max soln curve} is called the \textbf{maximum solution curve of $X$ with initial condition $x$}.  Define $\alpha \colon M \rightarrow (0,\infty]$ and $\beta \colon M \rightarrow [- \infty, 0)$ by the requirement
that the domain of $\sigma_x$ is $\left( \alpha(x), \beta(x) \right)$. The function $\alpha$ and $\beta$ are called respectively the positive and
negative escape time functions for $X$.

\begin{note}
	\label{domain D}
Let $M$ be a Hilbert manifold.  Let $$D := D(X) = \{ (x, t) \in M \times \mathbb{R} \hspace{2mm} | \hspace{2mm} \alpha(x) < t < \beta(x) \}$$ and for each $t \in \mathbb{R}$ let $D_t := D_t(X) = \{x \in M \hspace{2mm} | \hspace{2mm} (x, t) \in D \}$. Define $\phi \colon D \rightarrow M$ by $\phi(x,t) = \sigma_x(t)$ and $\phi_t \colon D_t \rightarrow M$ by $\phi_t(x) = \sigma_x(t)$. The set $\{ \phi_t \}$ is called the \textbf{maximum local one parameter group generated by X} or the \textbf{flow generated by $X$}.
\end{note}

\begin{thm}
	\label{cont ODE}
	
	In the set up of Definition \ref{domain D}, $D$ is open in $M \times \mathbb{R}$ and $\phi \colon D \rightarrow M$ is smooth.  For each $t \in \mathbb{R} \textrm{ the set } D_t$ is open in $M$ and $\phi_t$ is a smooth diffeomorphism of $D_t$ onto $D_{-t}$ having $\phi_{-t}$ as its inverse.  If $x \in D_t$ and $\phi_t(x) \in D_s$ then $x \in D_{t+s}$ and $\phi_{t+s}(x) = \phi_s \left( \phi_t (x) \right)$.
\end{thm}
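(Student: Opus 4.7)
The plan is to prove the four assertions in a logical order: first the cocycle identity (item 5), then the openness of $D$ together with smoothness of $\phi$, then the slice-wise statement that each $D_t$ is open and $\phi_t$ smooth, and finally the diffeomorphism claim. Throughout, the only real inputs are Theorem \ref{local ODE thm} (via charts) and the uniqueness part of Lemma \ref{max soln curve}.

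First I would prove the cocycle identity. Assume $x \in D_t$ and $\phi_t(x) \in D_s$. Define two curves: $\tau(u) := \sigma_{\phi_t(x)}(u)$, defined on $(\alpha(\phi_t(x)), \beta(\phi_t(x)))$, and $\eta(u) := \sigma_x(t+u)$, defined on the translated interval. Both are solution curves of $X$ with initial condition $\phi_t(x)$ at $u=0$, so by Lemma \ref{max soln curve} they coincide on the intersection of their domains; moreover $\eta$ is a restriction of a maximal solution starting at $\phi_t(x)$, so it extends no further than $\tau$. In particular $s$ lies in the domain of $\eta$, which forces $t+s$ to lie in $(\alpha(x), \beta(x))$, i.e.\ $x \in D_{t+s}$, and $\phi_{t+s}(x) = \sigma_x(t+s) = \tau(s) = \phi_s(\phi_t(x))$. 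A symmetric application with $s = -t$ shows $\phi_t(D_t) \subseteq D_{-t}$ and $\phi_{-t} \circ \phi_t = \mathrm{id}_{D_t}$, which will be used in the diffeomorphism step.

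The heart of the proof, and the main obstacle, is the openness of $D$ together with smoothness of $\phi$ on $D$; once this is done, everything else is immediate (each $D_t$ is a slice of the open set $D$, hence open; $\phi_t$ is smooth because $\phi$ is; and the diffeomorphism statement follows from the flow identity established above). Near $t=0$, Theorem \ref{local ODE thm} applied in a chart provides, for each $x_0 \in M$, an open neighbourhood $U_{x_0}$ and an $\epsilon_{x_0} > 0$ such that $U_{x_0} \times (-\epsilon_{x_0}, \epsilon_{x_0}) \subset D$ and $\phi$ is smooth there. To promote this to a neighbourhood of an arbitrary point $(x_0, t_0) \in D$, assume (after replacing $X$ by $-X$ if necessary) that $t_0 \geq 0$ and consider the compact set $K := \{\sigma_{x_0}(s) : 0 \leq s \leq t_0\}$. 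By local existence applied at each point of $K$ and compactness, choose a partition $0 = s_0 < s_1 < \cdots < s_N = t_0$ and positive numbers $\epsilon_0, \ldots, \epsilon_N$ together with open sets $V_i \ni \sigma_{x_0}(s_i)$ such that $V_i \times (-\epsilon_i, \epsilon_i) \subset D$, $\phi$ is smooth on each $V_i \times (-\epsilon_i, \epsilon_i)$, $s_{i+1} - s_i < \epsilon_i$, and $\phi_{s_{i+1}-s_i}(V_i) \subset V_{i+1}$. Starting from a neighbourhood $W_0 \subset V_0$ of $x_0$ small enough that $\phi_{s_1}(W_0) \subset V_1$, inductively define $W_{i+1} := \phi_{s_{i+1}-s_i}(W_i) \cap V_{i+1}$; the cocycle identity proved above gives, for $x$ in a shrunken neighbourhood of $x_0$ and $u$ close to $0$,
\[
\phi_{t_0 + u}(x) = \phi_u \circ \phi_{s_N - s_{N-1}} \circ \cdots \circ \phi_{s_1 - s_0}(x),
\]
a finite composition of maps each of which is smooth in its arguments on an open set. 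Therefore there exist a neighbourhood $W$ of $x_0$ and $\delta > 0$ with $W \times (t_0 - \delta, t_0 + \delta) \subset D$ and $\phi$ smooth on this product. This shows $D$ is open and $\phi \in C^\infty(D, M)$, which combined with the first paragraph completes all four assertions of the theorem.
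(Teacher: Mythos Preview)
The paper does not supply a proof of this theorem; it is stated without proof in the Background and Preliminaries chapter, the surrounding results being referenced to Palais \cite{RP63} and Lang \cite{SL01}. Your argument is the standard one found in those sources and the overall strategy (cocycle identity first, then a compactness-of-the-orbit-segment argument to propagate the local flow boxes) is correct.

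One genuine slip in the cocycle step: you write that $\eta$ ``extends no further than $\tau$'' and then conclude ``in particular $s$ lies in the domain of $\eta$.'' But from $\mathrm{dom}(\eta) \subseteq \mathrm{dom}(\tau)$ and $s \in \mathrm{dom}(\tau)$ you cannot infer $s \in \mathrm{dom}(\eta)$; the inclusion goes the wrong way. The fix is to invoke maximality of $\sigma_x$ rather than of $\tau$: since $\tau$ and $\eta$ agree on their common domain, the map $u \mapsto \tau(u-t)$ glues with $\sigma_x$ to produce a solution curve with initial condition $x$ defined on $(\alpha(x),\beta(x)) \cup \bigl(t+\alpha(\phi_t(x)),\, t+\beta(\phi_t(x))\bigr)$; maximality of $\sigma_x$ forces the second interval into the first, whence $t+s \in (\alpha(x),\beta(x))$ and $\phi_{t+s}(x)=\tau(s)=\phi_s(\phi_t(x))$. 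With this correction the rest of your proof, in particular the finite-composition argument for openness of $D$ and smoothness of $\phi$, goes through as written.
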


\chapter{Normal Forms}
		
	The purpose of this chapter is to extend the existing theory on local normal forms for Hamiltonian group actions to infinite-dimensional Banach manifolds.  More specifically, we formalize the local linearization theorem for compact group actions on Banach manifolds and establish a symplectic version of this local linearization theorem.  In so doing, we provide a $G$-equivariant version of Moser's argument suitable for our goal.
	
	\section{Statements}

	Our initial result is similar to the finite-dimensional local linearization theorem for compact group actions, found in \cite{JK53}.  In fact, in \cite{AL69} Weinstein notes without proof that the local linearization theorem holds for smooth actions of compact groups on Banach manifolds.  Following this lead (and for the sake of completeness here), we state and prove the following version of the local linearization theorem.

		\begin{thm}[The Local Linearization Theorem]
		\label{locallinearization}
		 Let a compact Lie group $G$ act on a real Banach manifold $M$ and let $m$ be a fixed point. 
		 Then there exists a $G$-equivariant diffeomorphism $f$ from an invariant neighbourhood of the origin in $T_{m}M$ onto an invariant neighbourhood of $m$ in $M$.
	\end{thm}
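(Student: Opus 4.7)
The plan is to adapt the classical Bochner linearization technique to the Banach manifold setting. The idea is to start with an arbitrary smooth chart centred at $m$, average it against the linearized $G$-representation on $T_mM$ via a Haar-measure Bochner integral to obtain a $G$-equivariant near-identity map, and then invert using the Banach-space inverse function theorem.

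More precisely, since $m$ is a fixed point the derivatives $\rho(g) := d(g\cdot)_m$ define a continuous linear representation $\rho \colon G \to GL(T_mM)$. Choose a smooth chart $\phi \colon U \to V \subset T_mM$ with $\phi(m) = 0$ and $d\phi_m = \mathrm{id}$, and write the action in the chart as $A_g(w) := \phi(g \cdot \phi^{-1}(w))$, so that $A_g(0) = 0$ and $dA_g|_0 = \rho(g)$. Compactness of $G$ and continuity of the action let one shrink $V$ to a $G$-invariant open neighbourhood $W$ of $0$ on which $A_g$ is defined for every $g \in G$. Fixing a normalized bi-invariant Haar measure $dg$ on $G$, I would then define the Bochner integral
$$
F(w) := \int_G \rho(g)^{-1} A_g(w) \, dg,
$$
which converges in $T_mM$ because the integrand is a continuous function on the compact group $G$.

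I would then verify three properties of $F$. First, $F$ is smooth on $W$ by differentiation under the Bochner integral, justified because the partial derivatives in $w$ are continuous in $g$ and uniformly bounded by compactness of $G$. Second, the substitution $k = gh$ combined with bi-invariance of Haar measure yields
$$
F(A_h w) \;=\; \int_G \rho(g)^{-1} A_{gh}(w) \, dg \;=\; \int_G \rho(h)\rho(k)^{-1} A_k(w) \, dk \;=\; \rho(h)\,F(w),
$$
so $F$ intertwines the chart action with the linearized representation. Third, $dF|_0 = \int_G \rho(g)^{-1}\rho(g)\, dg = \mathrm{id}$, so the Banach-manifold inverse function theorem (see Lang, Ch.~I, \S 5) produces a smooth local inverse $F^{-1}$ near $0$. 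After restricting to a $G$-invariant open neighbourhood $\Omega$ of $0$ contained in the image of $F$ (which exists by equivariance of $F$), the composition $f := \phi^{-1} \circ F^{-1}$ is the desired $G$-equivariant diffeomorphism from $\Omega$ onto a $G$-invariant open neighbourhood of $m$.

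The main technical obstacle is keeping all the domains $G$-invariant throughout the construction and after the final inversion. Compactness of $G$ is essential: it guarantees that $G$-intersections of the form $\bigcap_{g \in G} A_g^{-1}(V)$ and $\bigcap_{g \in G} \rho(g)(\Omega)$ remain open neighbourhoods of the origin, so that both the integral defining $F$ and the inverse $F^{-1}$ are defined on invariant domains. A secondary but standard point is the careful justification of smoothness of the Bochner integral and of differentiation under the integral in a Banach-valued setting; this reduces to uniform estimates on the integrand that again follow from compactness of $G$.
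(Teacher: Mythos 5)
Your proposal is correct and is essentially the same argument as the paper's: averaging a chart against the isotropy representation via the Haar integral and then applying the Banach inverse function theorem. Your integrand $\rho(g)^{-1}A_g(w)$ is just the substitution $g\mapsto g^{-1}$ away from the paper's average $\int_G g_*F(g^{-1}\cdot u)\,dg$, and your explicit attention to keeping the domains $G$-invariant is a welcome (if standard) refinement of a point the paper passes over quickly.
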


		We shall now review some relevant definitions and notions to be used in a symplectic version of the local linearization theorem, Theorem \ref{locallinearizationsymplectic}.  In the process we will point out differences from the finite-dimensional case when necessary.
		
		To begin, we wish to call attention to the fact that there exist various definitions of differential forms and other related such concepts.  For example, see \cite[Chapter VIII: Infinite Dimensional Differential Geometry]{KM97}.  For our purposes, it is enough to use the definitions found in \cite[p.61 and p.124]{SL01}.  That is, if $E$ is a real Banach \emph{space} and $U$ an open chart of $E$, then a \textbf{differential form} of degree $r$ (or simply an \textbf{$r$-form}) on $U$ is an $r$-multilinear and alternating (in the last $r$ variables) smooth map $U \times E \times \cdots \times E \rightarrow E$.  Let $L_{a}^{r}(TU)$ denote the bundle of $r$-multilinear continuous alternating 
forms on $U$.  Then $L_{a}^{r}(TU)$ is equal to $U \times L_{a}^{r}(E)$.  Thus, a differential form of degree $r$ on $U$ is a section of $L_{a}^{r}(TU)$ and is entirely determined by the projection on the second factor $L_{a}^{r}(E)$. The usual definition of the exterior derivative, and the proof of the Poincar\'{e} lemma, apply without modification \cite{SL01}.   
		
		Next, recall that on a vector space $E$, a bilinear form $\omega :E \times E \rightarrow \mathbb{R}$ is said to be \textbf{weakly nondegenerate} if for every $v \in E$,
		\begin{equation}
			\label{def1}
			\left( \omega(v,w) = 0 \hspace{3mm} \forall w \in E \right) \Rightarrow v = 0.
		\end{equation}

		Now assume $E$ is a Banach space.  Its dual, $E^*$, is the space of bounded linear functionals on $E$.  Recall also that $\omega$ defines a linear map $\omega^{\sharp } \colon E \rightarrow E^{*} : u \mapsto \omega(u,\cdot)$.  So weak nondegeneracy means $ker(E \rightarrow E^{*}) = 0$, this is, $E \rightarrow E^{*}$ is injective.	 If this map is also surjective, then $\omega$ is said to be \textbf{strongly nondegenerate}.
		
		In what follows we require our symplectic form to be nondegenerate in the strong sense.  Let $M$ be a Banach manifold endowed with a closed differential $2$-form $\omega$, which at each $m$ in $M$ is strongly nondegenerate as a bilinear form on $T_{m}M$.  Said in other words, $T_{m}M \rightarrow T^{*}_{m}M$ is a linear homeomorphism.  \textit{Notice that continuity of the inverse of this map is equivalent to the openness of  $T_{m}M \rightarrow T^{*}_{m}M$, which immediately follows from the Open Mapping theorem as $T_{m}M \rightarrow T^{*}_{m}M$ is surjective here.}
		
	\begin{thm}[The Local Linearization Theorem - symplectic version]
		\label{locallinearizationsymplectic}
		Let a compact Lie group $G$ act on a strongly symplectic Banach manifold $(M, \omega)$.  Let $m$ be a fixed point. 
				Then there exists a $G$-equivariant symplectomorphism $f$  from an invariant neighbourhood of the origin in $T_{m}M$ onto an invariant neighbourhood of $m$ in $M$.
			\end{thm}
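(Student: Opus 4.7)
The plan is to combine Theorem \ref{locallinearization} with a $G$-equivariant Moser argument adapted to the Banach setting. First I would invoke Theorem \ref{locallinearization} to produce a $G$-equivariant diffeomorphism $\phi$ from an invariant neighbourhood of $0 \in T_m M$ onto an invariant neighbourhood of $m$ in $M$ with $\phi(0)=m$; composing on the right with the linear isomorphism $(d\phi|_0)^{-1}$ of $T_m M$ (which is $G$-equivariant, since both source and target of $d\phi|_0$ carry the isotropy representation at $m$) I may further arrange $d\phi|_0 = \mathrm{id}$. The pullback $\omega_1 := \phi^*\omega$ is then a closed, strongly nondegenerate, $G$-invariant $2$-form on an invariant neighbourhood $U$ of $0$, satisfying $(\omega_1)_0 = \omega_m$. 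Let $\omega_0$ denote the translation-invariant $2$-form on $T_m M$ whose value at every point is $\omega_m$; because the isotropy representation preserves $\omega_m$, $\omega_0$ is $G$-invariant as well.

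Next I would consider the interpolation $\omega_t := (1-t)\omega_0 + t\omega_1$, $t\in[0,1]$. Each $\omega_t$ is closed, $G$-invariant, and equals $\omega_m$ at the origin, hence strongly nondegenerate at $0$. Since the set of invertible bounded operators $T_m M \to T_m^* M$ is open in the operator norm, by uniform continuity in $(t,x)$ I can shrink $U$ so that $\omega_t$ is strongly nondegenerate throughout $U$ for every $t \in [0,1]$. Applying the Banach Poincar\'e lemma to the closed form $\omega_1 - \omega_0$ on the star-shaped set $U$ via radial integration, a $G$-equivariant construction, yields a $G$-invariant $1$-form $\alpha$ with $d\alpha = \omega_1 - \omega_0$ and $\alpha_0 = 0$ (the latter because $(\omega_1-\omega_0)_0 = 0$, so the integrand in the radial homotopy formula vanishes at the origin).

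Now comes the Moser part. Strong nondegeneracy of $\omega_t$ lets me define a smooth $G$-invariant time-dependent vector field $X_t$ on $U$ by the equation $\iota_{X_t}\omega_t = -\alpha$. Since $\alpha_0 = 0$, $X_t$ vanishes at $0$, and Theorem \ref{cont ODE} applied on a possibly smaller invariant neighbourhood of $0$ produces a $G$-equivariant flow $\psi_t$ defined for all $t \in [0,1]$ with $\psi_0 = \mathrm{id}$ and $\psi_t(0) = 0$. The standard Moser computation
\[
\frac{d}{dt}\psi_t^*\omega_t = \psi_t^*\bigl(\mathcal{L}_{X_t}\omega_t + \tfrac{d}{dt}\omega_t\bigr) = \psi_t^*\bigl(d\iota_{X_t}\omega_t + (\omega_1-\omega_0)\bigr) = \psi_t^*(-d\alpha + d\alpha) = 0
\]
shows $\psi_1^*\omega_1 = \omega_0$, so $f := \phi \circ \psi_1$ is the desired $G$-equivariant symplectomorphism from an invariant neighbourhood of $0 \in T_m M$ (endowed with the constant form $\omega_m$) onto an invariant neighbourhood of $m \in M$.

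The principal obstacle, absent in finite dimensions, is controlling the functional-analytic subtleties peculiar to the Banach setting. First, I must ensure that \emph{strong}, rather than merely weak, nondegeneracy of $\omega_t$ persists uniformly on a single neighbourhood for all $t \in [0,1]$, so that the pointwise inversion $X_t = -(\omega_t^\sharp)^{-1}\alpha$ is well defined and smooth; this relies on the Open Mapping Theorem together with the continuity of $\omega_t^\sharp$ in $(t,x)$. Second, I must ensure that the time-dependent flow of $X_t$ exists on a $G$-invariant neighbourhood up to time $1$, which is handled by shrinking about the fixed point $0$ of $X_t$ and invoking Theorem \ref{cont ODE}. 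These are precisely the issues that the author's $G$-equivariant version of Moser's lemma (Lemma \ref{moserthm}) is formulated to address.
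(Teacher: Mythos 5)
Your proposal is correct and follows essentially the same route as the paper: reduce to the tangent space via the (averaged, equivariant) local linearization with derivative the identity at $m$, then run a $G$-equivariant Moser argument on the linear interpolation between $\omega$ and the constant form $\omega_m$. If anything, you are more explicit than the paper about the points it leaves implicit — the equivariant radial primitive $\alpha$ with $\alpha_0=0$, the uniform persistence of strong nondegeneracy of $\omega_t$ over $t\in[0,1]$, and the existence of the flow up to time $1$ because $X_t$ vanishes at the fixed point.
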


	\section{Proofs}
	
	The proof of Theorem \ref{locallinearization} is obtained by analogy with the finite-dimensional argument.  We begin with a simple lemma.  Suppose $G$ is a compact Lie group, and that $G$ acts on a Banach manifold $M$.

	\begin{lem}
		\label{equivariant1}
		Let $F$ be a diffeomorphism from an invariant neighbourhood of $m$ in $M$ onto a neighbourhood of the origin in a vector space $V$ such that $F(m)=0$.  Suppose that $G$ acts on $V$.  Define the $G$-average of $F$ as $\tilde{F}(u) := \int_{g \in G} (gF(g^{-1}u))dg$, where $dg$ is the normalized Haar measure on $G$.  Then the average $\tilde{F}$ is $G$-equivariant.
	\end{lem}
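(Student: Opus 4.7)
The plan is a direct change-of-variables computation using the left-invariance of the normalized Haar measure on the compact group $G$. First I would check that the formula makes sense: for $u$ in a sufficiently small $G$-invariant open neighbourhood $U$ of $0 \in V$ contained in the image of $F$, the orbit $G \cdot u$ still lies in this image, so the integrand $g \mapsto g\,F(g^{-1}u)$ is a continuous map from the compact group $G$ into the Banach space $V$. Consequently the Bochner integral $\tilde F(u) := \int_G g\,F(g^{-1}u)\,dg$ is well defined and $\tilde F$ is continuous on $U$.

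Now fix $h \in G$. Then
\[
\tilde F(hu) \;=\; \int_G g\,F(g^{-1}hu)\,dg.
\]
Performing the substitution $g' = h^{-1}g$, so that $g = hg'$ and $dg = dg'$ by left-invariance of Haar measure, this integral becomes
\[
\tilde F(hu) \;=\; \int_G (hg')\,F\bigl((hg')^{-1}hu\bigr)\,dg' \;=\; \int_G hg'\,F(g'^{-1}u)\,dg'.
\]
Since the action of $h$ on $V$ is linear and continuous, it commutes with the Bochner integral, yielding
\[
\tilde F(hu) \;=\; h\int_G g'\,F(g'^{-1}u)\,dg' \;=\; h\,\tilde F(u),
\]
which is precisely the required $G$-equivariance.

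The only point that is not entirely formal in the infinite-dimensional setting is the last step, where one must pull the continuous linear operator $h \colon V \to V$ through the Banach-space valued integral. This is a standard property of the Bochner integral for continuous integrands over a compact measure space, so it is not really an obstacle so much as a hypothesis to keep in mind: the action of $G$ on $V$ must be by continuous linear maps, which is the setting in which the lemma will subsequently be applied (with $V = T_m M$ equipped with the derivative representation). Everything else reduces to bi-invariance of Haar measure, exactly as in the finite-dimensional argument.
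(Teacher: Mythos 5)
Your proof is correct and follows essentially the same route as the paper: a change of variables $g \mapsto h^{-1}g$ using the invariance of the Haar measure, combined with pulling the (linear, continuous) action of $h$ through the integral. The only difference is cosmetic — you substitute first and extract $h$ second, while the paper does the reverse — and your added remarks on the Bochner integral are a harmless elaboration.
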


\begin{proof}
	Let $U \subset M$ be an invariant neighbourhood of $m$ and let $F : U \rightarrow V$ be any diffeomorphism onto a neighbourhood of the origin in $V$.  To ensure the  existence of such a diffeomorphism we use that there exists a chart near $m$ and that every open neighbourhood of $m$ contains an invariant open neighbourhood of $m$.  Let $\tilde{F} : U \rightarrow V$ given by $\tilde{F}(u) = \int_{g \in G} (gF(g^{-1}u))dg$ be its average.   We want to show $\tilde{F}(h \cdot u) = h \cdot \tilde{F}(u)$ $\forall h \in G$, $u \in U$.
	
	Consider  \begin{eqnarray*} \tilde{F}(h \cdot u) & = & \int_{g \in G} \left( g F \left( g^{-1} h \cdot u \right) \right) dg \textrm{ , by definition of } \tilde{F} \\
																   	   & = & h \left( \int_{g \in G} \left( h^{-1} g F \left( g^{-1}h \cdot u \right) \right) dg \right) \\
														 		   	   & = &  h \left( \int_{g \in G} h^{-1}g F \left( (h^{-1}g)^{-1} \cdot u \right) dg \right) \\
																   	   & = & h \left( \int_{g \in G} j F \left( j^{-1} \cdot u \right) dj \right)  \textrm{ , where $j=h^{-1}g$. Note $dg$ is invariant under $g \mapsto j$}\\
													   				   & = &	 h \cdot \tilde{F}(u) \textrm{ , as wanted.} \\
	\end{eqnarray*}
	
\end{proof}

	\begin{lem}
		\label{identity}
		Let $F$ be a diffeomorphism from an invariant neighbourhood of $m$ in $M$ to a neighbourhood of the origin in $V =T_{m}M$ with the isotropy action.   Let $\tilde{f}$ be its average.  Suppose the derivative of $F$ at $m$ is the 
		identity mapping on $T_{m}M$.  Then $d \tilde{f}|_{m} : T_{m}M \rightarrow T_{m}M$ is the identity.
	\end{lem}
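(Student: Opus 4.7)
The plan is to compute $d\tilde{F}|_m$ directly by differentiating under the integral sign in the definition $\tilde{F}(u) = \int_G g F(g^{-1} u)\, dg$. Since $G$ is compact and everything in sight is smooth, the interchange of $d$ and $\int_G$ is justified by a standard argument (the integrand depends smoothly on $(g,u)$ over the compact set $G$, so one can apply the usual theorem on differentiation of parameter integrals, localizing in a chart around $m$ if necessary).

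Next, I would apply the chain rule to the integrand $u \mapsto g \cdot F(g^{-1}\cdot u)$ at $u = m$. Because $m$ is a fixed point of the $G$-action, both $g^{-1}\cdot m = m$ and $g\cdot m = m$, so the differential at $m$ is the composition
\[
    d\bigl(g \circ F \circ g^{-1}\bigr)\big|_m = (dg)|_{F(m)} \circ dF|_m \circ (dg^{-1})|_m.
\]
Here $F(m) = 0 \in V = T_m M$ and the $G$-action on $V$ is the \emph{isotropy} (i.e.\ linear) action, which coincides with $dg|_m$. Under the identification of $V$ with itself, multiplication by $g$ on $V$ is its own derivative at $0$, so $(dg)|_{F(m)}$ is just the linear action of $g$ on $V$, and similarly $(dg^{-1})|_m$ is the linear action of $g^{-1}$ on $T_m M = V$.

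Using the hypothesis $dF|_m = \mathrm{id}_{T_m M}$, the integrand collapses: for any $v \in T_m M$,
\[
    d\bigl(g \circ F \circ g^{-1}\bigr)\big|_m(v) = g \cdot \bigl(g^{-1}\cdot v\bigr) = v.
\]
Integrating over $G$ with respect to the normalized Haar measure then gives $d\tilde{F}|_m(v) = \int_G v\, dg = v$, as desired.

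The only non-routine point is the justification of differentiation under the integral sign, but this is standard given compactness of $G$, smoothness of the map $(g,u)\mapsto gF(g^{-1}u)$, and the fact that in a chart around $m$ the derivative with respect to $u$ is continuous in $g$, so it is uniformly bounded on $G$ in operator norm. Everything else is a direct linear-algebraic computation using the fixed-point property of $m$ and the assumption on $dF|_m$.
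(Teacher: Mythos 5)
Your proof is correct and follows essentially the same route as the paper's: differentiate under the integral sign, apply the chain rule to $g\circ F\circ g^{-1}$ at the fixed point $m$, use that the linear (isotropy) action is its own derivative, and collapse the composition using $dF|_m=\mathrm{id}$. Your extra care in justifying the interchange of differentiation and integration is a reasonable addition that the paper takes for granted.
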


\begin{proof}
	Let $U \subset M$ be an invariant neighbourhood of $m$ and let $F : U \rightarrow T_{m}M$ be any diffeomorphism onto a neighbourhood of the origin in $T_{m}M$.
	
	We have for all $g \in G$, $g : U \rightarrow U$ and $g_{*} : T_{m}M \rightarrow T_{m}M$.  By definition $dg|_{m} = g_{*}$ and $dg_{*}|_{0} = g_{*}$ because $g_*$ is a linear map and $dg_*$ is also linear.
	
	So the average of $F$ is $\tilde{f} := \int_{g \in G} \left( g_{*}F \left( g^{-1} \cdot u \right) \right) dg$.  Therefore,  
	
	\begin{eqnarray*} 
	d \tilde{f}|_{m}(\cdot) & = & \int_{g \in G} d \left( g_{*} F g^{-1} \right) |_{m}(\cdot) dg \\
						& = & \int_{g \in G} \left( dg_{*}|_{0} \circ dF|_{m} \circ dg^{-1}|{m} \right) (\cdot)dg \textrm{ , by the chain rule} \\
						& = &  \int_{g \in G} \left( g_{*} \circ dF|_{m} \circ g_{*}^{-1} \right) (\cdot)dg \textrm{ , by the above choice of notation and since $dg_{*}|_{0} = g_{*}$} \\
						& = & \int_{g \in G} \left( g_{*} \circ g_{*}^{-1} \right) (\cdot) dg \textrm{ , because } dF|_{m} = {\rm identity} \textrm{  by assumption} \\
						& = & \int_{g \in G} (\cdot) dg \\
						& = & \textrm{identity} 
	\end{eqnarray*}

\end{proof}

	\begin{proof}[Proof of Theorem \ref{locallinearization}]
	Let $U \subset M$ be an invariant neighbourhood of $m$.	Let $F : U \rightarrow T_{m}M$ be any smooth map such that $dF |_{m} : T_{m}M \rightarrow T_{m}M$ is the 
	identity mapping.  
		 
	 Take any $g \in G$.  Note $g$ acts on both $U$ and $T_{m}M$; $ g : U \rightarrow U$ and $ g_{*} : T_{m}M \rightarrow T_{m}M$.  Let $dg|_{m} = g_{*}$ and $g_{*}|_{0} = g_{*}$.
	
	Consider  $g_{*} \circ F \circ g^{-1} : U \rightarrow T_{m}M$.  By construction, this map is also a diffeomorphism such that its derivative at $m$ is the identity mapping on $T_{m}M$.  The average $\tilde{f} : 
	U \rightarrow T_{m}M$, which is defined by $\tilde{f}(u) :=  \int_{g \in G} \left( g_{*} F \left( g^{-1} \cdot u \right) \right) dg$ where $dg$ is the invariant Haar measure on $G$, is a $G$-equivariant diffeomorphism such that 
	$d \tilde{f}|_{m} = {\rm identity}_{T_{m}M}$ by lemma \ref{equivariant1} with $V = T_{m}M$ and lemma \ref{identity}. 

	By the inverse function theorem for Banach manifolds (see \cite{SL01}) we can invert $\tilde{f}$ on a neighbourhood of $m$ to obtain the desired diffeomorphism $f$, as required. 
		
	\end{proof}
	
	In the paper \cite{AL69} Darboux's theorem for Banach manifolds is explained.  In \cite{AL71} a remark as to how to establish an equivariant version of the Darboux-Weinstein theorem is made.  To help in the analysis in the proof of Theorem \ref{locallinearizationsymplectic}, we will need an equivariant local version of Moser's theorem.  Toward this end, and using similar techniques found in \cite{AL69} and \cite{AL71}, we will employ the next lemma.
			
	\begin{lem}[Moser's Theorem]
		\label{moserthm}
		Let $M$ be a Banach manifold with strongly symplectic forms $\omega_{0}$ and $\omega_{1}$.  Let $m$ be in $M$.   Assume $\omega_{0}$ and $\omega_{1}$ coincide on $T_{m}M$.  
		Then there exists a neighbourhood $U$ of $m$ and there exists a diffeomorphism $\psi$ from $U$ to an open subset of $M$ such that $\psi^*\omega_1 = \omega_0|_{U}$.
\end{lem}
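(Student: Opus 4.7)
The plan is to adapt the standard Moser homotopy argument to the Banach setting, using the strong nondegeneracy in a crucial way to invert the relevant bundle map. First I would shrink $M$ to a contractible chart neighbourhood of $m$ so that the Poincar\'e lemma (valid as stated for Banach manifolds in the text cited from \cite{SL01}) applies: since $\omega_1 - \omega_0$ is closed, write $\omega_1 - \omega_0 = d\alpha$ on such a neighbourhood for some smooth $1$-form $\alpha$. By subtracting from $\alpha$ the $1$-form $(d\alpha|_m)(\cdot,\,\cdot - m)$ in a chart and a constant $1$-form, or equivalently by the standard homotopy operator applied relative to $m$, I can arrange that $\alpha_m = 0$.

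Next, consider the linear interpolation $\omega_t := (1-t)\omega_0 + t\omega_1$ for $t \in [0,1]$; each $\omega_t$ is closed, and since $\omega_0|_m = \omega_1|_m$ by hypothesis, we have $\omega_t|_m = \omega_0|_m$ for every $t$, so $\omega_t^{\sharp}|_m \colon T_mM \to T_m^*M$ is a linear homeomorphism. The set of bounded linear isomorphisms with bounded inverse between Banach spaces is open in the operator norm topology, so by continuity of $x \mapsto \omega_t^{\sharp}|_x$ and compactness of $[0,1]$, I can shrink the neighbourhood $U$ of $m$ so that $\omega_t^{\sharp}|_x$ is an isomorphism for all $(x,t) \in U \times [0,1]$, with inverse depending smoothly on $(x,t)$. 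This strong nondegeneracy uniformly in $t$ lets me define a smooth time-dependent vector field $X_t$ on $U$ by $\iota_{X_t}\omega_t = -\alpha$, and since $\alpha_m = 0$ we have $X_t(m) = 0$ for all $t$.

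Because $m$ is a zero of each $X_t$, standard existence theory for time-dependent ODEs on Banach manifolds (Theorem \ref{local ODE thm} applied in a chart, or the corresponding time-dependent version) shows that the flow $\psi_t$ of $X_t$ starting at the identity is defined on some possibly smaller neighbourhood of $m$ for all $t \in [0,1]$. Then I compute, using Cartan's formula $\mathcal{L}_{X_t} = d\iota_{X_t} + \iota_{X_t}d$ and $d\omega_t = 0$,
\[
\frac{d}{dt}\,\psi_t^*\omega_t \;=\; \psi_t^*\!\left(\mathcal{L}_{X_t}\omega_t + \tfrac{d}{dt}\omega_t\right) \;=\; \psi_t^*\!\left(d\iota_{X_t}\omega_t + (\omega_1 - \omega_0)\right) \;=\; \psi_t^*(-d\alpha + d\alpha) \;=\; 0.
\]
Hence $\psi_1^*\omega_1 = \psi_0^*\omega_0 = \omega_0$ on the neighbourhood where $\psi_1$ is defined, and $\psi := \psi_1$ is the required diffeomorphism.

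The main obstacle, compared with the finite-dimensional argument, is step two: we need the one-parameter family $\omega_t$ to remain \emph{strongly} nondegenerate near $m$ so that $\iota_{X_t}\omega_t = -\alpha$ is actually solvable for a smooth $X_t$. Weak nondegeneracy would not suffice, because we need not only injectivity of $\omega_t^{\sharp}$ but surjectivity with bounded inverse, in order to write $X_t = -(\omega_t^{\sharp})^{-1}\alpha$ as a smooth section of $TM$. This is exactly where the Open Mapping Theorem remark highlighted earlier in the section is used: openness of the isomorphisms in the bounded operator norm, together with compactness of $[0,1]$, yields the required uniform-in-$t$ isomorphism property on a sufficiently small neighbourhood of $m$.
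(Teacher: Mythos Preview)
Your proposal is correct and follows essentially the same route as the paper's proof: linear interpolation $\omega_t$, Poincar\'e lemma to produce a primitive vanishing at $m$, strong nondegeneracy near $m$ uniformly in $t$ to solve $\iota_{X_t}\omega_t=-\alpha$, and integration of the resulting time-dependent field. If anything, you are slightly more explicit than the paper about invoking openness of invertible operators together with compactness of $[0,1]$, and about why $X_t(m)=0$ guarantees the flow exists up to $t=1$ near $m$.
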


	\begin{proof}
	Denote $\omega_{t} := (1-t) \omega_{0} + t \omega_{1}$, where $\omega_{0} := \psi^{*} \omega |_{m}$ and $\omega_{1} := \omega$.  By the Poincar\'{e} Lemma \cite{SL01}, there exists a $1$-form $\sigma$ on $U$ 
	such that $\omega_{1} - \omega_{0} = d \sigma$.  Observe that we can arrange for $\sigma|_{T_mM} = 0$.  We now look for a smooth, time dependent, vector field $X_{t} : M \rightarrow M$ on a neighbourhood of $m$ with $X_{t}|_{m}=0$ and $\iota(X_t)\omega_t = - \sigma$.

	The main idea is to determine a family of diffeomorphisms $\psi_t \in$ Maps($(U \rightarrow M)$ with $\psi_t^{*} \omega_t = \omega_{0}|_U$ by representing them as the flow of a family of time-dependent vector fields $X_t$ on a neighbourhood of $m$. Thus we suppose that			

		\begin{equation}
			\label{eqns}
			\frac{d}{dt} \psi_t = X_t \circ \psi_t, \,\,\psi_0 = \textrm{id}.
		\end{equation}

		So we know 
		\begin{eqnarray*}
			\psi_t^*\omega_t = \omega & \Leftrightarrow& \frac{d}{dt}\left( \psi^*\omega_t \right) = 0 \textrm{  , for all $t$}\\
									&\Leftrightarrow& \psi_t^* \left(  \frac{d}{dt} \omega_t + \mathcal{L}_{X_t}\omega_t  \right) = 0 \textrm{  , where $\mathcal{L}_{X_t}$ is the Lie derivative of $\omega_t$ along $X_t$}\\
									&\Leftrightarrow & \psi_t^* \left( d\sigma + \iota(X_t)\,d\omega_t + d(\iota(X_t)\omega_t)\right) = 0 \textrm{  , by using Cartan's formula and the choice of $\sigma$ }\\
									&\Leftrightarrow & \psi_t^* \left( d\sigma + d(\iota(X_t)\omega_t) \right) = 0 \textrm{  , since $\omega_t$ is closed by assumption}\\
									&\Leftrightarrow & X_t \textrm{ satisfies the linear (over $\mathbb{R}$) equation } d\sigma+d(\iota(X_t)\omega_t) = 0 \\
									&\Leftrightarrow & d(\sigma + \iota(X_t) \omega_t) = 0.
		\end{eqnarray*}

		This last identity will hold if 
		\begin{equation}
			\label{eqn2}
			\sigma + \iota(X_t)\omega_t = 0.
		\end{equation}

				Observe that for all $t$, $\omega_t$ is strongly nondegenerate at $m$.  Thus, there exists a neighbourhood $U$ of $m$ such that for all $t$ \hspace{1mm} $\omega_t$ is strongly nondegenerate on $U$.  Let $\omega_{t}(X_{t},\cdot) = - \sigma$ where $\omega_{t} : T_{m}M \rightarrow T^{*}_{m}M$, $(\sigma)_{m} \in T^{*}_{m}M$.  Recall that if $s \mapsto A_s$ is a smooth family of invertible operators then the family $A_s^{-1}$ of inverses is smooth.  So $X_{t} = - (\omega_{t})^{-1} \sigma$ is a smooth (and also smooth in $t$), time-dependent vector field taking values in $M$.  So, for any choice of 1-form $\sigma$ equation (\ref{eqn2}) can always be solved for $X_t$.  
		Therefore, (reading this argument backwards) we see that we can always find an $X_t$ that satisfies $\frac{d}{dt} \omega_t + \mathcal{L}_{X_t}\omega_t = 0$ .  
		
		Hence, by integrating $X_{t}$\footnote{ See \cite{SL01} chapters $IV$ and $V$ for explicit conditions that guarantee integrability of a vector field on a Banach manifold} (and shrinking $U$ again if necessary), there exists a family $\psi_t$ of diffeomorphisms such that (\ref{eqns}) holds. From this we easily deduce $\psi_t^*\omega_t = \omega_0|_{U}$  and accordingly the required conditions are satisfied.  Let $\psi = \psi_1$.  That is to say, there exists an isotopy $\psi : U \times [0,1] \rightarrow M : (q, t) \mapsto \psi_{t}(q)$,\hspace{1mm} $\psi_{t} \in$ Maps$(U \rightarrow M)$, and $\psi_0 = \textrm{id}$  with  $\psi^*\omega_t = \omega_0$ for all $t \in [0,1]$. 
	\end{proof}

	\begin{proof}[Proof of Theorem \ref{locallinearizationsymplectic}]
				Let $U \subset M$ be an invariant neighbourhood of $m$.  Proceeding in the same manner as the proof of Theorem \ref{locallinearization}, let $F : U \rightarrow T_{m}M$ be any smooth map such that $dF|_{m} = {\rm identity}_{T_{m}M}$.  The average $\psi : U \rightarrow T_{m}M$, given by $$\psi(u) :=  \int_{g \in G} \left( g_{*} F \left( g^{-1} \cdot u \right) \right) dg$$ \noindent where $dg$ is the Haar measure on $G$, is smooth, $G$-equivariant (c.f. Lemma \ref{equivariant1}), and satisfies $d \psi |_{m} = {\rm identity}_{T_{m}M}$ (c.f. Lemma \ref{identity}).

	Given a symplectic form $\omega$ on $M$, let $\omega_{0} := \psi^{*} (\omega |_{m})$ and $\omega_{1} := \omega$.  These are $G$-invariant symplectic forms on $U \subset M$.  Notice that $\omega_{0}$ and $\omega_{1}$ 
	coincide on $T_{m}M$ becuase $d \psi|_m = id_{T_xM}$.	 Consider now the family $\omega_{t} := (1-t) \omega_{0} + t \omega_{1}$ of closed $2$-forms on $U$.  We can assume that $\omega_{t}$ is a symplectic form for all $t \in [0,1]$ by shrinking 
	$U$ if necessary.  We want a $G$-equivariant map $\psi_{t} : U \rightarrow T_{m}M$ such that $\frac{d}{dt} \psi^{*}_{t} \omega_{t} = 0$.  That is, we need a local \emph{equivariant} Moser's theorem.  This map is obtained by
	Lemma \ref{moserthm} (applied to a neighbourhood of $m$) with an additional restriction.  The $G$-equivariance of the $\psi_t$ provided in \ref{moserthm} can be achieved by restricting the choice of $\sigma$ to $G$-invariant $\sigma$; all of the constructions can then be made `equivariantly' with respect to $G$.
	
	Therefore, by the inverse function theorem \cite{SL01} we invert $\psi$ on a neighbourhood of $m$ to get the desired symplectomorphism $f$.
	\end{proof}

\chapter{Connectedness - The Base Case}

	We begin this chapter by collecting some facts on Morse Theory and gradient flows which are relevant and needed to prove the main results of this chapter, the Connected Levels Theorem (Theorem \ref{connectedlevels}).

\section{Morse Functions and Their Gradient Flows}

\begin{lem}
	\label{flow on H}
	
	Let $\mathbb{H}$ be a Hilbert space and let $\mathbb{H}_{\pm} \subset \mathbb{H}$ be closed subspaces such that $\mathbb{H} = \mathbb{H}_{+} \oplus \mathbb{H}_{-}$.  Let $f^{\mathbb{H}} \colon \mathbb{H} \rightarrow \mathbb{R}$ be defined by $$f(x_+,x_-) = \| x_+ \|^2 - \| x_- \|^2.$$	Then the trajectory of $- \nabla f^{\mathbb{H}}$ starting at $x = (x_+,x_-) \in \mathbb{H}$ is given by
	$$t \mapsto (e^{-2t}x_+, e^{2t}x_-).$$ 
\end{lem}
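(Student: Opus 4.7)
The plan is to compute $\nabla f^{\mathbb{H}}$ explicitly, which reduces the trajectory equation to two decoupled linear ODEs on the closed subspaces $\mathbb{H}_{\pm}$, and then to invoke uniqueness. First, since $\mathbb{H}_+$ and $\mathbb{H}_-$ are orthogonal complements (with respect to the given Hilbert space inner product on $\mathbb{H}$), the function $f^{\mathbb{H}}(x_+,x_-) = \langle x_+, x_+\rangle - \langle x_-, x_-\rangle$ is a sum of smooth quadratic forms on each factor. By bilinearity I would compute, for a tangent vector $v = (v_+, v_-)$, that
\[
df^{\mathbb{H}}_{x}(v) = 2\langle x_+, v_+\rangle - 2\langle x_-, v_-\rangle = \bigl\langle (2x_+, -2x_-), (v_+, v_-)\bigr\rangle,
\]
and from the defining relation $df_x(v) = \langle \nabla f(x), v\rangle_x$ read off $\nabla f^{\mathbb{H}}(x) = (2x_+, -2x_-)$, hence $-\nabla f^{\mathbb{H}}(x) = (-2x_+,\, 2x_-)$.

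Next, the trajectory equation $\dot{\gamma}(t) = -\nabla f^{\mathbb{H}}(\gamma(t))$ with $\gamma(0) = (x_+, x_-)$ splits, thanks to the invariance of each summand $\mathbb{H}_\pm$ under the linear vector field, into the two scalar-like linear ODEs
\[
\dot{\gamma}_+(t) = -2\gamma_+(t), \qquad \dot{\gamma}_-(t) = 2\gamma_-(t),
\]
on the Hilbert spaces $\mathbb{H}_+$ and $\mathbb{H}_-$ respectively. These are solved by the operator exponentials $\gamma_+(t) = e^{-2t}x_+$ and $\gamma_-(t) = e^{2t}x_-$; differentiating each directly verifies both the equation and the initial condition.

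Finally, because $-\nabla f^{\mathbb{H}}$ is a (bounded) linear vector field on $\mathbb{H}$, it is smooth, and the local existence and uniqueness theorem for ODEs on Hilbert spaces (Theorem \ref{local ODE thm}) together with Lemma \ref{max soln curve} guarantees that the solution curve with a given initial condition is unique; hence $t \mapsto (e^{-2t}x_+, e^{2t}x_-)$ is the unique maximal trajectory of $-\nabla f^{\mathbb{H}}$ through $x$. The only point requiring a little care—rather than being a real obstacle—is that $\mathbb{H}_{\pm}$ must be \emph{closed} orthogonal subspaces so that the induced inner product is the direct-sum one and the gradient calculation is valid; this is part of the hypothesis.
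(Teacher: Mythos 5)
Your proposal is correct and follows essentially the same route as the paper: compute $df_x(v)=2\langle x_+,v_+\rangle-2\langle x_-,v_-\rangle$, read off $-\nabla f^{\mathbb{H}}(x)=(-2x_+,2x_-)$, and check that $t\mapsto(e^{-2t}x_+,e^{2t}x_-)$ solves the resulting equation with the right initial condition. If anything you are slightly more careful than the paper, which only verifies the derivative condition at $t=0$ and leaves uniqueness implicit, whereas you verify the ODE for all $t$ and cite the uniqueness theorem explicitly.
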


\begin{proof}
	 Note that $ \left. (e^{-2t}x_+, e^{2t}x_-) \right|_{t=0}  = x$.  It is enough to show that $$\left. \frac{d}{dt} \right|_{t=0} \left( e^{-2t} x_+, e^{2t}x_- \right) \left( = \left( -2x_+, 2x_- \right) \right) = - \nabla f^{\mathbb{H}} \left|_x \right..$$  \noindent Recall that the gradient vector field $\nabla f^{\mathbb{H}}$on $\mathbb{H}$ is defined by the property that for all $x \in \mathbb{H}$, for all $v \in \mathbb{H}$, $df \left|_x \right. (v) = \langle \nabla f^{\mathbb{H}} \left|_x \right., v \rangle$.  So it is enough to show that for all $x, v \in \mathbb{H}$, $df \left|_x \right. (v) = - \langle (-2x_+, 2x_- ), v \rangle$.  
	 
	   Let $x = (x_+,x_-) \in \mathbb{H}$ and $v = (v_+, v_-) \in \mathbb{H}$.  Then
	  \begin{eqnarray*}
  df|_x(v) & = & df|_{(x_+,x_-)}(v_+, v_-) \\
  			 & = & D_{v_+} \left( ||x_+||^2 \right) - D_{v_-} \left( ||x_-||^2 \right) \textrm{ because $f^{\mathbb{H}}(x)=||x_+||^2-||x_-||^2$}\\
  			 & = & \left. \frac{d}{dt} \right|_{t=0} ||x_+ + tv_+ ||^2  - \left. \frac{d}{dt}\right|_{t=0} ||x_- + tv_- ||^2 \\
  			 & = & \left. \frac{d}{dt}\right|_{t=0} \left( ||x_+||^2 + 2t \langle x_+, v_+ \rangle + t^2 ||v_+||^2 \right) \\     
			 & - & \left. \frac{d}{dt}\right|_{t=0} \left( ||x_-||^2 + 2t \langle x_-, v_- \rangle + t^2 ||v_-||^2 \right) \\
			& = & 2 \langle x_+, v_+ \rangle - 2 \langle x_-, v_- \rangle \\
			& = & - \langle (-2x_+, 2x_-), v \rangle \\				  	  
	  \end{eqnarray*}

  Therefore, $(e^{-2t}x_+, e^{2t}x_-)$ gives the desired flow.
\end{proof}

	\begin{note}
	A smooth function $f \colon M \rightarrow \mathbb{R}$ on a Hilbert manifold $M$ is called a \textbf{Morse function} if all of its critical points are strongly nondegenerate.  That is, for every $x \in Crit(f)$, the operator $\nabla^2f |_x \colon T_xM \rightarrow T_xM$ obtained from the Hessian via the Riemannian metric is a linear isomorphism.
\end{note}	
	
\begin{rem}  \begin{enumerate}
\item Note that whether or not a function is Morse is independent of a choice of Riemannian metric.

\item Some references in the literature have weak nondegeneracy, that is the Hessian $H_p(f)$ induces only an injective map $\nabla^2f(x) \colon T_xM \rightarrow T_xM$, i.e. $\ker \left( \nabla^2f |_x \right)  = 0$, in their definition of a Morse function.

\end{enumerate}
\end{rem}	
	
	 In Morse theory, the Morse lemma introduces special coordinates around a critical point.  We recall this fundamental lemma now for Hilbert manifolds.
	 
\begin{lem}[The Morse Lemma]
	\label{Morse lemma}
	Let $f \colon M \rightarrow \mathbb{R}$ be a smooth function and let $p \in Crit(f)$.  Suppose that $p$ is strongly nondegenerate.  Then there exists an open neighbourhood $B \subset M$ of $p$ and a chart $\phi \colon B \rightarrow \mathbb{H}$ around $p$ with target a Hilbert space $\mathbb{H}$ such that $\phi(p) = 0$ and $\left( f \circ \phi^{-1} \right) (v) = \| Pv \|^2 - \| (I-P)v \|^2$ on $\phi(B)$, where $P$ is an orthogonal projection in $\mathbb{H}$ to a closed subspace (i.e., $Pv \in \mathbb{H}_+$ and $(I-P)v \in \mathbb{H}_-$ where $\mathbb{H} = \mathbb{H}_+ \oplus \mathbb{H}_-$).
\end{lem}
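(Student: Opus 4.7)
The plan is to reduce to a Hilbert space chart and then execute the classical Morse--Palais argument. Starting from any chart around $p$, identify a neighbourhood of $p$ with an open neighbourhood $U$ of $0 \in \mathbb{H}$ in such a way that $f(0) = 0$ and $df|_0 = 0$. Two successive applications of Taylor's formula with integral remainder (using the smoothness of $x \mapsto d^2 f|_x$ as a map into the space of continuous symmetric bilinear forms on $\mathbb{H}$, cf.\ Remark \ref{Hessian remark}) produce a smooth family $x \mapsto B(x)$ of continuous symmetric bilinear forms on $\mathbb{H}$ with $B(0) = \tfrac{1}{2} H_0(f)$ and $f(x) = B(x)(x,x)$ on $U$.

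Using the inner product of $\mathbb{H}$, represent $B(x)$ by a smooth family of bounded self-adjoint operators $A(x)$ via $B(x)(u,v) = \langle A(x) u, v \rangle$. By the strong nondegeneracy of $p$, the operator $A(0) = \tfrac{1}{2} H_0(f)$ is a linear isomorphism, and after shrinking $U$ I may assume $A(x)$ is invertible for every $x \in U$. Set $L(x) := A(0)^{-1} A(x)$. A direct computation using the self-adjointness of $A(x)$ and $A(0)$ shows that $L(x)$ is self-adjoint with respect to the indefinite bilinear form $\langle A(0) \cdot, \cdot \rangle$, and $L(0) = I$. Hence for $x$ small the spectrum of $L(x)$ lies in a disk around $1$ that avoids the negative real axis, and the holomorphic functional calculus produces a smooth square root $T(x) := L(x)^{1/2}$ with $T(0) = I$ that inherits the $\langle A(0)\cdot,\cdot\rangle$-self-adjointness, i.e.\ $A(0) T(x) = T(x)^* A(0)$. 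Combining these gives $T(x)^* A(0) T(x) = A(0) L(x) = A(x)$.

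Now define $\phi_1(x) := T(x) x$. Since $d\phi_1|_0 = T(0) = I$, the inverse function theorem for Banach manifolds supplies a local diffeomorphism near $0$, and the identity
\[
f(x) = \langle A(x) x, x \rangle = \langle A(0)\, T(x) x,\; T(x) x \rangle = \langle A(0)\, \phi_1(x),\; \phi_1(x) \rangle
\]
shows that $f \circ \phi_1^{-1}$ equals the quadratic form $y \mapsto \langle A(0) y, y \rangle$ on a neighbourhood of $0$. Finally, apply the spectral theorem to the bounded self-adjoint invertible operator $A(0)$: its spectrum is bounded away from $0$, so the spectral projection $P$ onto the positive part gives a decomposition $\mathbb{H} = \mathbb{H}_+ \oplus \mathbb{H}_-$ with $A(0)|_{\mathbb{H}_+} =: A_+$ and $-A(0)|_{\mathbb{H}_-} =: A_-$ both positive, self-adjoint, and boundedly invertible. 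The linear isomorphism $S := A_+^{1/2} \oplus A_-^{1/2}$, defined via functional calculus, satisfies $\langle A(0) y, y \rangle = \|P(Sy)\|^2 - \|(I-P)(Sy)\|^2$. Setting $\phi := S \circ \phi_1$ (composed with the initial chart), $B := \phi^{-1}(\phi(U))$, delivers the chart demanded by the statement.

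The main technical step is the construction of $T(x)$: producing a smooth operator square root of $L(x)$ that intertwines $A(0)$ and $A(x)$. This is where the Hilbert structure (as opposed to merely Banach) enters essentially, both in the Riesz-type representation of $B(x)$ by a self-adjoint operator and in the spectral/functional calculus required to extract the square roots of $L(x)$ and of $A_\pm$. Everything else in the argument is, by design, formally identical to the finite-dimensional Morse lemma.
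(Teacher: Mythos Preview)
Your argument is correct and is precisely the Morse--Palais proof. The paper itself does not give a proof of this lemma; it simply cites Palais \cite{RP63}, page 307, so your write-up in fact supplies what the paper omits, following exactly the source the paper points to.

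One cosmetic point: the lemma as stated asserts $(f\circ\phi^{-1})(v)=\|Pv\|^2-\|(I-P)v\|^2$ with no additive constant, so strictly speaking it presumes $f(p)=0$. You silently absorb this by normalising $f(0)=0$ at the outset; that is fine, but it is worth flagging that the statement is really about $f-f(p)$.
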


\begin{proof}
	See Palais \cite{RP63} page $307$.
\end{proof}

\begin{rem}
\begin{enumerate}
\item	It is an immediate consequence of the Morse Lemma that a nondegenerate critical point of a smooth function, say $f$, on a Hilbert manifold is isolated in $Crit(f)$.  In particular, if $f$ is a Morse function then the set $Crit(f)$ is discrete.

\item Note that weak nondegeneracy does not work in this setting; in fact weakly nondegenerate critical points need not be isolated in $Crit(f)$.  For example let $M = \ell_2 = \left\lbrace \{ x_k \} \subseteq \mathbb{R} \hspace{2mm} | \hspace{2mm} \displaystyle \sum_{k=1}^{\infty} |x_k|^2 < \infty \right\rbrace$.  Define $f \colon \mathbb{H} \rightarrow \mathbb{R}$ by $f(x) = - \displaystyle \sum_{k=1}^{\infty} \frac{ \cos(kx_k) }{k^4}$ ($f$ is smooth).  Then $0 \in Crit(f)$.  Moreover $0$ is weakly nondegenerate.  But any neighbourhood of $0$ has infinitely many critical points.  See \cite{JT77}, pg. $51$ for details.
\end{enumerate}
\end{rem}

	In the Morse Lemma \ref{Morse lemma}, the coordinate chart $\phi$ is called a \textbf{Morse chart} for the function $f$.  Note that the index at $p$ equals the dimension of the range of $I-P$ and the coindex of $p$ equals the dimension of the range of $P$, where $P$ is the projection from Lemma \ref{Morse lemma} (\cite{RP63} pg. $303$).
	
\begin{note}
	\label{standard def}
	Let $X$ be a vector field on a manifold $M$.  The vector field $X$ is said to be \textbf{standard near a point $p$} in $M$ if there exists a chart $\phi \colon U_p \rightarrow B_0 \subset \mathbb{H}$, where $B_0$ is a neighbourhood of $0$ in $\mathbb{H}$,  such that $p \mapsto 0$ and there exists a decomposition $\mathbb{H} = \mathbb{H}_+ \oplus \mathbb{H}_-$ such that $\phi$ intertwines the vector field near $p$ with the vector field on $\mathbb{H}$ whose value at the point $(x_+, x_-)$ is equal to $( -2 x_+, 2 x_- )$. 
\end{note}

\begin{rem}
\label{std rephrased}

\begin{enumerate}
\item Let $(M, g)$ be a Riemannian manifold and let $f$ be a smooth function on $M$.  If there exists a neighbourhood of a point $p \in M$ and a Morse chart near $p$ which is also an isometry (with respect to the metric on $\mathbb{H}$), then the gradient vector field $\nabla_gf$ of $f$ is standard near $p$. 

We will say that the Riemannian metric is standard (near each critical point $p$ of $f$) if 
the gradient vector field with respect to this metric is standard near each $p$.

\item  Note that the flow generated by a smooth vector field which is standard near a point $p$ is locally conjugate to the flow generated by its linearization.
\end{enumerate}
\end{rem}

Suppose that we are given a complete Riemannian metric $g$ on a Hilbert manifold $M$ and let $f \colon M \rightarrow \mathbb{R}$ be a smooth real-valued function on $M$.  Let us collect together some basic properties of $- \nabla_gf$, the negative gradient of $f$ with respect to $g$:

\begin{enumerate}	

\item   $- \nabla_gf$ has the property that $((\nabla_gf)f)(p) = 0$ if and only if $p \in Crit(f) \subset M$.  Therefore $Crit(f)$ is the set of zeros of the real-valued function $||\nabla_gf||$;

\item   The flow of the vector field $- \nabla_g f$ is a one-parameter group of diffeomorphisms $\rho^M_t \colon D_t \rightarrow M$ for $t \in \mathbb{R}$.  We require that  $\rho^M_0 = \textrm{id}$ and $ \frac{d \rho^M_t}{dt}\hspace{1mm}|_m = - \nabla_g f \hspace{1mm}|_{\rho^M_t(m)}$.

\item  The value of $f$ decreases along any non-constant flow line, $t \mapsto \rho^M_t$, of  $- \nabla_gf$.  We can easily see this, by Rolle's theorem, from the following calculation:
	\begin{eqnarray*} \frac{d}{dt} f \left( \rho^M_t ( \cdot ) \right) & = & df \left( \dot{\rho}^M_t ( \cdot ) \right) \textrm{ by def of $df$} \\
															& = & \langle \nabla_g f( \cdot ) \textrm{ , } \dot{\rho}^M_t ( \cdot ) \rangle \textrm{ by def of $\nabla_g f$} \\
															& = & \langle \nabla_g f( \cdot ) \textrm{ , } - \nabla_g f( \cdot ) \rangle \textrm{ by def of $\rho^M_t$} \\
														   	& = & - \| \nabla_g f( \cdot ) \|^{2} \\
												 		   	& \leq & 0 \
	\end{eqnarray*}
	with equality only if $p \in Crit(f)$.  That is, by Rolle's theorem, $( - \nabla_g f)(f)$ is negative off the critical set of $f$.

\end{enumerate}

	Next we establish that a Morse chart that is also an isometry intertwines the negative gradient flow on the neighbourhood with the negative gradient flow on the vector space.

\begin{lem}
  	\label{std flow}
	
	 Let $M$ be a Hilbert manifold and let $f \colon M \rightarrow \mathbb{R}$ be a Morse function. Let $p \in Crit(f)$ and let $U_p \subset M$ be a neighbourhood of $p$.  Let $\mathbb{H}$ be a Hilbert space and let $\mathbb{H}_\pm \subset \mathbb{H}$ be closed subspaces such that $\mathbb{H} = \mathbb{H}_{+} \oplus \mathbb{H}_{-}$.  
	 Let $\phi \colon U_p \rightarrow \mathbb{H}$ be an isometry such that $\phi(U_p) = B_{+} \times B_{-}$ where $B_{\pm} \subset \mathbb{H}_{\pm}$ are unit balls in $\mathbb{H}_{\pm}$ respectively.   Assume that $\phi$ is a Morse chart.  
	 Let $\rho_t^M$ be the gradient flow of $-f$ on $M$.  By Lemma \ref{flow on H} the negative gradient flow of $f^{\mathbb{H}}(x) = ||x_+||^2-||x_-||^2$ 	on $\mathbb{H}$ is $$\rho_{t}^{\mathbb{H}}(x_+,x_-) = (e^{-2t}x_+, e^{2t}x_-).$$  
	Then for all $t \in \mathbb{R}$ and for any $m \in U_p \cap (\rho^M_t)^{-1} (U_p) $, $$\phi \left( \rho_t^M (m) \right)  = \rho^{\mathbb{H}}_t \left(  \phi (m) \right). $$   
\end{lem}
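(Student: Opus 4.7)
The plan is to verify that $\phi$, being both an isometry and a Morse chart for $f$, pushes the negative gradient vector field $-\nabla_g f$ forward to $-\nabla f^{\mathbb{H}}$ on $\phi(U_p)$; once this is in hand, the intertwining of flows follows from uniqueness of integral curves of vector fields.

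The first step is to notice that the Morse-chart hypothesis gives $f = f^{\mathbb{H}} \circ \phi$ on $U_p$, and the isometry hypothesis lets one translate the defining equation for $\nabla_g f$ into one for $\nabla f^{\mathbb{H}}$. Concretely, for $x \in U_p$ and $v \in T_xM$, the chain rule gives $d(f^{\mathbb{H}} \circ \phi)|_x(v) = \langle \nabla f^{\mathbb{H}}|_{\phi(x)}, d\phi_x v\rangle_{\mathbb{H}}$, which equals $\langle (d\phi_x)^{-1}\nabla f^{\mathbb{H}}|_{\phi(x)}, v\rangle_{g,x}$ because $\phi$ is an isometry. Comparing this with $df|_x(v) = \langle \nabla_g f|_x, v\rangle_{g,x}$ and invoking strong nondegeneracy of $g$ yields $d\phi_x(\nabla_g f|_x) = \nabla f^{\mathbb{H}}|_{\phi(x)}$, and hence $d\phi_x(-\nabla_g f|_x) = -\nabla f^{\mathbb{H}}|_{\phi(x)}$.

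Given this intertwining of vector fields, I would finish by a standard ODE-uniqueness argument. For $m \in U_p \cap (\rho^M_t)^{-1}(U_p)$, consider the curves $\gamma_1(s) = \phi(\rho^M_s(m))$ and $\gamma_2(s) = \rho^{\mathbb{H}}_s(\phi(m))$. Both satisfy $\gamma_i(0) = \phi(m)$; the chain rule combined with the pushforward identity above shows $\gamma_1'(s) = -\nabla f^{\mathbb{H}}|_{\gamma_1(s)}$, while $\gamma_2$ is the integral curve of $-\nabla f^{\mathbb{H}}$ through $\phi(m)$ by Lemma \ref{flow on H}. By the local existence and uniqueness theorem for ODEs (Theorem \ref{local ODE thm}) they must coincide on their common interval of definition, and in particular at time $t$.

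The main obstacle is essentially bookkeeping: one must track that the curve $s \mapsto \phi(\rho^M_s(m))$ stays inside $\phi(U_p)$ so that the chain-rule computation using $\nabla f^{\mathbb{H}}$ defined on $\phi(U_p)$ is valid. The hypothesis on $m$ together with continuity of the flow handles this by restricting to an interval on which both sides are defined; there is no deeper difficulty beyond that.
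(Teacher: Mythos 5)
Your first two steps match the paper's: the paper likewise begins by showing that $\phi$ pushes $-\nabla_g f$ forward to the linear field $x \mapsto (-2x_+, 2x_-)$ on $B_+\times B_-$, and it likewise closes with the local existence and uniqueness theorem for ODEs. The computation you give for the pushforward is fine.

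The genuine gap is in your final paragraph, where you dismiss the domain issue as ``bookkeeping'' that is ``handled by the hypothesis on $m$ together with continuity of the flow.'' It is not. The hypothesis $m \in U_p \cap (\rho^M_t)^{-1}(U_p)$ only constrains the two endpoints of the trajectory; it does not prevent $s \mapsto \rho^M_s(m)$ from leaving $U_p$ at some intermediate time and re-entering later. If that happened, your curve $\gamma_1(s)=\phi(\rho^M_s(m))$ would cease to be defined (let alone be an integral curve of $-\nabla f^{\mathbb{H}}$) on part of $[0,t]$, the uniqueness argument would only give agreement up to the first exit time, and the conclusion at time $t$ would not follow --- indeed it could fail, since outside $U_p$ the flow of $-\nabla_g f$ is not governed by the linear model. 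The paper devotes the bulk of its proof to exactly this point: it analyzes the entry and exit points of trajectories of $\rho^{\mathbb{H}}_t$ with $B_+\times B_-$, observes that $f^{\mathbb{H}}\ge 0$ on the entry face $\partial B_+\times B_-$ and $f^{\mathbb{H}}\le 0$ on the exit face $B_+\times\partial B_-$, and combines this with the strict monotonicity of $f$ along nonconstant flow lines to conclude that a trajectory which exits $U_p$ cannot return. Only then does the hypothesis on $m$ force the entire segment of trajectory between times $0$ and $t$ to remain in $U_p$, which is what licenses the ODE-uniqueness step. You need to supply this monotonicity/no-return argument (or something equivalent) to close the proof.
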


\begin{proof}
		
	Let $ t \in \mathbb{R}$.  Let $m \in U_p \cap (\rho^M_t)^{-1} (U_p) $. \
	
	$$ \xymatrix{
                            & U_p \cap (\rho^M_t)^{-1} (U_p) \ar[r]_(.7){\phi} \ar[d]^{\rho^M_t} 
                            & \mathbb{H} \ar[d]_{\rho^{\mathbb{H}}_t}   \\
  							& \rho^M_t(U_p) \cap U_p \ar[r]^(.7){\phi} & \mathbb{H}
 } $$ \\
	
	We first show that $\phi$ intertwines the vector field $- \nabla_g f$ on $M$ with the vector field $\left(  x \mapsto (-2x_+, 2x_-) \right)$ on $\mathbb{H}$.  That is, we need to show that 
	$$d \phi_m \left( - \nabla_gf |_m \right) = \left(  x \mapsto (-2x_+, 2x_-) \right) |_{\phi(m)}.$$ 
	
	Consider $ d \phi_m \colon T_{m}U_p \rightarrow T_{\phi(m)} \mathbb{H}$.  Note that $T_{\phi(m)} \mathbb{H} = \mathbb{H}$ and that $T_mU_p = T_mM$ because $U_p$ is open.  So $d \phi_m$ is a bijective linear map between $T_mM$ and $\mathbb{H}$.  It follows that $d \phi_m(- \nabla_gf|_m) \in \mathbb{H}$.   But recall $\phi$ is a Morse chart and that $f^{\mathbb{H}}(x_+, x_-) = ||x_+||^2 - ||x_-||^2$ by hypothesis. Hence, $d \phi_m \left( - \nabla_gf|_m \right)$ decomposes into a positive and negative part.  Namely,  $d\phi_m \left( -\nabla_g f |_m \right) = - (2 x_+, -2x_- )$.  Since $\phi$ is an isometry it follows that $$d \phi_m \left( - \nabla_gf |_m \right) = \left(  x \mapsto (-2x_+, 2x_-) \right) |_{\phi(m)}$$ \noindent as wanted.
	
	Next we show that $\phi$ intertwines the flow $\rho^M_t$ on $U_p \subset M$ with the flow $\rho^{\mathbb{H}}_t$ on $B_+ \times B_- \subset \mathbb{H}$.  Assume that $t>0$.  The case $t < 0$ is similar.  Let $\gamma \colon [ 0 , t ] \rightarrow M$ be a maximal trajectory for $- \nabla_gf$ such that $\gamma(0)$, $\gamma(t) \in U_p$. Note that $\gamma^{-1} (U_p)$ is an interval.

\[ \xymatrix{
                    & [0,t] \ar[ld]_{\gamma} 
                                                  \ar[rd]^{\gamma^\star}
       \ar@{}[r]
                                        &  &  \\
 U_p \ar[rd]_{f} \ar[rr]^(.4){\phi} &                                                                                & B_+ \times B_-
 \ar[ld]^{f^{\mathbb{H}}=||x_+||^2-||x_-||^2}
        \\
                    &  \mathbb{R}                                                                        &  & }
                    \]
\medskip 
 
The diffeomorphism $\phi$ takes $\gamma$ to a maximal trajectory, say $\gamma^\star := \gamma \circ \phi$, in $B_+ \times B_-$ for $\left(  x \mapsto (-2x_+, 2x_-) \right) |_{B_+ \times B_-}$.  Since $\phi$ is a diffeomorphism between $U_p$ and $B_+ \times B_-$ that is also an isometry, we have that $$\rho_{t'}^M(m) \in U_p \textrm{ if and only if } \rho_{t'}^{\mathbb{H}}( \phi(m)) \in B_+ \times B_- \textrm{ for all $t' \in [0,t]$}.$$ 
 
 \noindent That is, the ``entry" and ``exit" values of $f$ (with respect to the flow $\rho^M|_{U_p})$ and $f^{\mathbb{H} } $ (with respect to the flow $\rho^{ \mathbb{H} }|_{ B_{+} \times B_{-} } $) are the same.
  
	\begin{figure}[htpb]
	\begin{center}
	\resizebox{11cm}{13cm}{\includegraphics{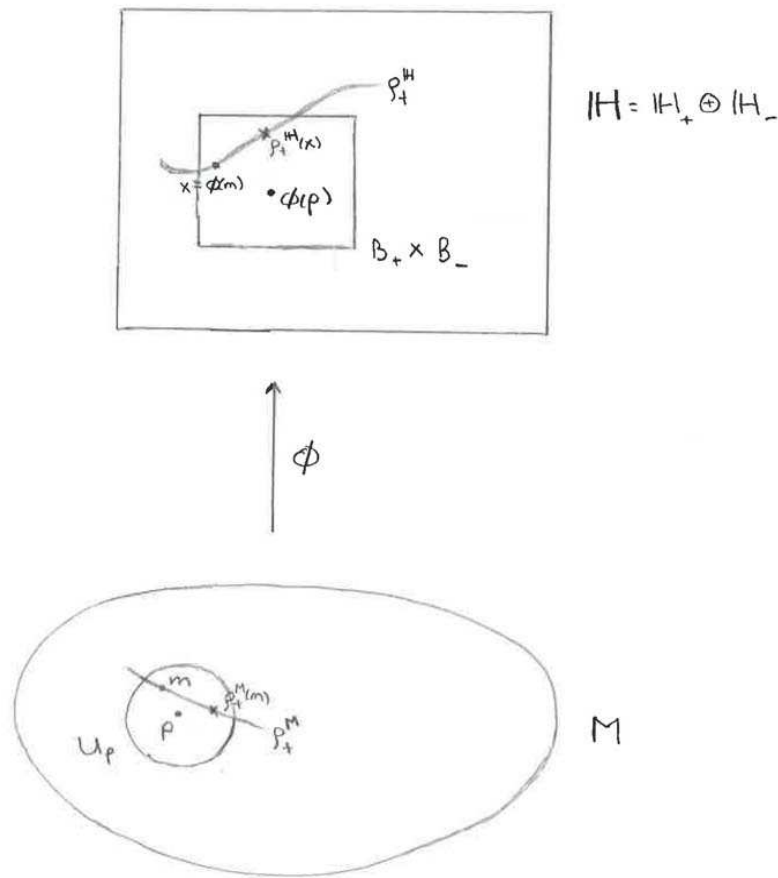}}
	\end{center}
	\caption{ Intertwining Gradient Flows}
	\label{fig:intertwine}
	\end{figure}
	 
\underline{On $M$}:  Consider $\rho^M_t$, an arbitrary flow line of $- \nabla_g f$ on $M$.  We know by definition that $\frac{d}{dt} \rho^M_t(m) = - \nabla_g f |_{\rho^M_t(m)}$.  So we have that $$\frac{d}{dt} f \left( \rho^M_t (m) \right)  = \left( ( - \nabla_g f ) f \right) (m) = - || \nabla_g f(m) ||^2.$$  This implies that $f \left( \rho^M_t (m) \right) $ is (monotonically) decreasing in $t$, i.e., $f$ is decreasing along non-constant flow lines of $- \nabla_g f$.
 We define the \textit{entry time} of $\rho^M_t$ on $U_p$ as the point \newline $t_\rho := \inf \{ \tau \hspace{2mm} | \hspace{2mm} [0, \tau ] \subseteq \gamma^{-1} \left( U_p \right) \}$.  Then the \textit{entry point} of $\rho^M_t$ on $U_p$ is $x_\rho := \gamma(t_\rho) \in \overline{U_p} \subseteq M$.  
 Similarly, we define the \textit{exit time} of $\rho^M_t$ on $U_p$ as the point 
 $\tilde{t}_\rho := \sup \{ \tau \hspace{2mm} | \hspace{2mm} [ \tau, t ] \subseteq \gamma^{-1} \left( U_p \right) \}$.  Then the \textit{exit point} of $\rho^M_t$ on $U_p$ is $y_\rho := \gamma (\tilde{t}_\rho) \in \overline{U_p} \subseteq M$. 
 
 From these entry/exit point definitions we see that $f(x_\rho) > f(y_\rho)$ since $f$ is decreasing along $\rho^M_t$. 

\underline{On $\mathbb{H}$}:  Recall again that by assumption, $\phi$ is a diffeomorphism between $U_p$ and its image $\phi(U_p) = B_+ \times B_-$.  So $d \phi_m$ is a bijective linear map between the sets $\{$ vector fields on $M$ $\}$ and $\{$ vector fields on $\mathbb{H}$ $\}$.  Consequently, all that remains is to consider $\rho^{\mathbb{H}}_t$, the corresponding flow of $- \nabla_g f$ on $\mathbb{H}$.  Recall that $\rho^{\mathbb{H}}_t(x_+, x_-) = ( e^{-2t}x_+, e^{2t}x_-)$ by Lemma \ref{flow on H}.  Suppose that $||B_+|| = ||B_-|| = 1$.  Observe that $\rho^{\mathbb{H}}_t$ meets $B_+ \times \partial B_-$ at the point $\left( ||x_-||x_+, \frac{1}{||x_-||}x_- \right) $.  Also observe that $\rho^{\mathbb{H}}_t$ meets $\partial B_+ \times B_-$ at the point $\left( \frac{1}{||x_+||} x_+, ||x_+||x_- \right) $.

 We define the \textit{entry point}, respectively \textit{exit point}, of $\rho^{\mathbb{H}}_t$ with $B_+ \times B_-$ as follows: \\
 \noindent
 Case 1:  If both $x_+$ and $x_-$ are nonzero, then the entry point is $\left( \frac{1}{||x_+||} x_+, ||x_+||x_- \right) $ and the exit point is $\left( ||x_-||x_+, \frac{1}{||x_-||}x_- \right) $.\\
 Case 2:  If $x_+ = 0$ but $x_- \neq 0$, then $\rho^{\mathbb{H}}_t(x_+, x_-) = (0, e^{2t}x_-)$.  Therefore, for large $t \hspace{2mm} \rho^{\mathbb{H}}_t$ never meets $\partial B_+ \times B_-$.  That is, $\rho^{\mathbb{H}}_t$ never meets $B_+ \times B_-$.  For $t \ll 0$, the entry point is  $\left( 0,  \frac{1}{||x_-||}x_-  \right) $ and there is no exit point.  That is, $\rho^{\mathbb{H}}_t$ enters $B_+ \times B_-$ and converges to $\phi(p)=(0,0) \in B_+ \times B_-$.\\
 Case 3:  If $x_-= 0$ but $x_+ \neq 0$, then $\rho^{\mathbb{H}}_t(x_+, x_-) = (e^{-2t}x_+, 0)$. So for large $t$, the entry point of $\rho^{\mathbb{H}}_t$  is  $\left( \frac{1}{||x_+||} x_+, 0 \right) $ and there is no exit point because $\rho^{\mathbb{H}}_t$ never meets $B_+ \times \partial B_-$.  That is, $\rho^{\mathbb{H}}_t$ enters $B_+ \times B_-$ and converges to $\phi(p)=(0,0)$, i.e.,  $\rho^{\mathbb{H}}_t$ never exits.   For $t \ll 0$, $ \rho^{\mathbb{H}}_t$ never meets $\partial B_+ \times B_-$.  That is, $\rho^{\mathbb{H}}_t$ never meets $B_+ \times B_-$. \\
Case 4.  If $(x_+, x_-) = (0,0)$ then $\rho^{\mathbb{H}}_t (x_+, x_-)$ is constant.  For all $t \in \mathbb{R}$, $\rho^{\mathbb{H}}_t$ will either never meet $B_+ \times B_-$ or it will enter at the point $\left( \frac{1}{||x_+||}x_+,||x_+||x_- \right)$ and exit at the point $\left( ||x_-||x_+,\frac{1}{||x_-||}x_- \right)$.

\noindent Thus, $f^{\mathbb{H}} > 0$ at each entry point and $f^{\mathbb{H}} < 0$ at each exit point for the flow on $B_+ \times B_-$.  Consequently, $f > 0$ at each entry point and $f < 0$ at each exit point for the flow on $U_p$.  Hence, if any trajectory on $M$ exits $U_p$ it does not return.

It now follows from the local existence and uniqueness results for ODEs (see Lang \cite{SL01} Chapter IV) , that our result $\phi \left( \rho_t^M (m) \right)  = \rho^{\mathbb{H}}_t \left(  \phi (m) \right) $ holds.
\end{proof}

	The last lemma shows us that near each critical point of $f$ we can always modify a Riemannian metric on $M$ so that the negative gradient vector field of $f$ is standard near each critical point of $f$.  Stated more precisely, 

\begin{lem}
	\label{modify stdmetric}
	
	Let $M$ be a Hilbert manifold.  Let $f \colon M \rightarrow \mathbb{R}$ be a Morse function.  Let $g$ be a Riemannian metric on $M$.  For each $p \in Crit(f)$, let $U_p$  be a neighbourhood of $p$.  Then there exists a Riemannian metric $\tilde{g}$ on $M$ such that:	
\begin{list}{}{}
	\item{(i)} for all $p \in Crit(f)$ there is a neighbourhood $V_p$ of $p$ in $U_p$ such that $- \nabla_{\tilde{g}} f$ is standard on $V_p$;
	\item{(ii)} $\tilde{g}$ coincides with $g$ outside of $\hspace{1mm} \bigcup_{p \in Crit(f)} U_p$
\end{list}
\end{lem}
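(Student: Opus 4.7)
The plan is to modify $g$ locally near each critical point by constructing a metric for which a Morse chart becomes an isometry, then patching this with $g$ using a partition of unity argument. First, by the Morse Lemma \ref{Morse lemma}, for each $p \in Crit(f)$ there exists a Morse chart $\phi_p \colon B_p \to \mathbb{H}$ where $B_p$ is an open neighborhood of $p$; shrinking if necessary, we may assume $\overline{B_p} \subset U_p$. Since $f$ is Morse, the critical points are isolated, so we may also choose the $U_p$ pairwise disjoint.

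Next, define a Riemannian metric $h_p$ on $B_p$ by pulling back the standard Hilbert space inner product via $\phi_p$, so that $\phi_p \colon (B_p, h_p) \to \mathbb{H}$ is by construction an isometry and a Morse chart. By Remark \ref{std rephrased}(1), the gradient $\nabla_{h_p} f$ is standard near $p$, and hence so is $-\nabla_{h_p} f$. Choose a smooth bump function $\chi_p \colon M \to [0,1]$ supported in $B_p$ with $\chi_p \equiv 1$ on a smaller open neighborhood $V_p$ of $p$ with $\overline{V_p} \subset B_p$. Since the supports $\{\text{supp}(\chi_p)\}$ are pairwise disjoint and contained in the disjoint $U_p$'s, the sum
\[
\tilde{g} := g + \sum_{p \in Crit(f)} \chi_p (h_p - g)
\]
is well defined and smooth on $M$; on each $U_p$ it reduces to the convex combination $(1-\chi_p) g + \chi_p h_p$, and outside $\bigcup_p U_p$ it equals $g$, giving (ii).

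To verify (i), observe that on $V_p$ we have $\chi_p \equiv 1$, so $\tilde{g} = h_p$ there, and therefore $-\nabla_{\tilde{g}} f = -\nabla_{h_p} f$ is standard on $V_p$ by the choice of $h_p$. It remains to check that $\tilde{g}$ is a \emph{strong} Riemannian metric, i.e., that the associated operator on each tangent space is bounded and invertible with bounded inverse. Pointwise on $U_p$, the operator $\tilde{G}(x)$ is a convex combination of the positive self-adjoint bounded operators representing $g$ and $h_p$. Both of these are bounded below by positive multiples of the identity (since $g$ and $h_p$ are strong), and convex combinations preserve this lower bound (using that $\chi_p(x), 1-\chi_p(x) \geq 0$ and at least one is positive), so $\tilde{G}(x)$ is positive, self-adjoint and bounded with bounded inverse.

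The one subtlety worth flagging is smoothness of $x \mapsto \tilde{G}(x)^{-1}$ in the operator norm, needed for the definition in \ref{smooth riemannian str}; this follows from the fact that inversion is smooth on the open set of invertible bounded operators and the chain rule applied to the smooth map $x \mapsto \tilde{G}(x)$. No global compactness is required because everything is controlled pointwise via the convex combination, and the construction is performed independently in each disjoint $U_p$.
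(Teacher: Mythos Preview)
Your proof is correct and follows essentially the same approach as the paper: shrink to disjoint neighbourhoods, invoke the Morse Lemma to obtain Morse charts, pull back the Hilbert-space inner product to get a local metric in which the chart is an isometry, and patch with $g$ via a bump function supported in $B_p$. The paper's version is terser and omits the verification that the convex combination is again a strong Riemannian metric; your extra checks (positivity of the convex combination and smoothness of $x\mapsto \tilde G(x)^{-1}$) are a welcome addition.
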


\begin{rem}
	This lemma serves as motivation for Lemma \ref{condition C preserved} in Section $\S 4.3$ (Connected Levels) which gives a direct proof of a stronger result.
\end{rem}

\begin{proof}  
	We can shrink $U_p$ such that the $\overline{U_p}$ are disjoint.  Let $p \in Crit(f)$.  By the Morse Lemma \ref{Morse lemma}, there exists a neighbourhood $B_p \subseteq U_p$ of $p$ and a Morse chart $\phi_p \colon B_p \rightarrow \mathbb{H}$ such that $\phi_p(p)=0$ and $\left( f \circ \phi_p^{-1} \right) (v) = \| Pv \|^2 - \| (I-P)v \|^2$ on $\phi_p(B_p)$.  
		
	Let $\lambda_p \colon \mathbb{H} \rightarrow \mathbb{R}$ be a bump function.  That is, let $\lambda_p$ be a smooth function satisfying: \\
	$\bullet \hspace{2mm} 0 \leq \lambda_p(x) \leq 1$, and \\
	$\bullet \hspace{2mm} \lambda_p(x) = 1$ near $0$, and \\
	$\bullet \hspace{2mm} supp\left( \lambda_p (x) \right) \subseteq \phi_p(B_p)$. \\
	
	Let $m \in B_p$ and $X, Y \in T_mB_p$.  Then define the new metric  

 	 \[ \tilde{g}|_m(X,Y) = \left\{ \begin{array}{ll}
	\displaystyle \left( 1 - \lambda_p( \phi(m) ) \right) g|_m(X,Y) + \lambda_p( \phi(m)) \langle X_m,Y_m \rangle_{\phi(m)} & \textrm{ if $m \in U_p$} \\
	g|_m & \textrm{ if $m \not\in \cup_{p \in Crit(p)} U_p$ .} 
\end{array} \right. \] 	

\noindent where $\langle \cdot, \cdot \rangle_{\phi(m)}$ denotes the inner product coming from $\mathbb{H}$.

	By construction this new metric $\tilde{g}$ satisfies properties $(i)$ and $(ii)$, as wanted.  
\end{proof}

\section{Stable and Unstable Manifolds}

	Let us start this section by reviewing some known definitions and giving some important assumptions.  We will then state and prove the Global (Un)Stable Manifold Theorem \ref{W^s submfld}.  Lastly, we finish this section by examining a couple of additional results pertaining to the stable manifold.

\begin{note}
	\label{stable unstable set def}
	Let $M$ be a Hilbert manifold.  Let $f \colon M \rightarrow \mathbb{R}$ and let $p \in Crit(f)$.  Fix a metric $g$ on $M$.  The \textbf{stable set} $W^{s}(p)$ of $p$ is defined to be the set of all points $x \in M$ such that the $- \left( \nabla_{g}f \right) $-trajectory $\rho^M_{t}(x)$ starting at $x$ is defined for all $t$ in $\mathbb{R}^+$ and $\displaystyle \lim_{t \rightarrow \infty} \rho^M_{t}(x) = p$.  That is, $$W^{s}(p) = \{ x \in M \hspace{2mm} | \hspace{2mm} \rho^M_{t}(x) \textrm{ is defined for all $t \in \mathbb{R}^+$ and } \lim_{t \rightarrow \infty} \rho^M_{t}(x) = p \}.$$

	The \textbf{unstable set} $W^{u}(p)$ of $p$ is defined to be the set of all points $x \in M$ such that the $- \left( \nabla_{g}f \right) $-trajectory $\rho^M_{t}(x)$ starting at $x$ is defined for all $t$ in $\mathbb{R}^-$ and $\displaystyle \lim_{t \rightarrow - \infty} \rho^M_{t}(x) = p$.  That is, $$W^{u}(p) = \{ x \in M \hspace{2mm} | \hspace{2mm} \rho^M_{t}(x) \textrm{ is defined for all $t \in \mathbb{R}^-$ and } \lim_{t \rightarrow - \infty} \rho^M_{t}(x) = p \}.$$

\end{note}

	In the rest of this section we assume that $M$ is a complete Riemannian Hilbert manifold (see below) and $f \colon M \rightarrow \mathbb{R}$ is a Morse function that is bounded from below and satisfies Condition (C).  By complete we mean that $M$ is a complete metric space in the metric induced from the Riemannian metric.
	
	For the reader's convenience we recall how this metric on $M$ is defined.  Given $x$ and $y$ in $M$ we define $$\rho(x,y) = \inf \int_0^1 \| \sigma '(t) \| dt$$ where the infimum is over all $C^1$ paths $\sigma \colon [0,1] \rightarrow M$ such that $\sigma (0) =x$ and $\sigma (1) =y$.  Just as in the finite dimensional case one shows that $\rho$ is a metric on $M$ which is consistent with the manifold topology (see Palais \cite{RP63}, $\S 9$ pg. $311$).  
	
	We recall Condition (C) of Palais and Smale for $f$:
	\label{Condition (C)}
	
	\begin{list}{}{}
	\item Condition (C) (\textbf{Palais-Smale condition}):
	\item If $\{ x_n \} \subset M$ is any sequence in $M$ for which $|f(x_n)|$ is bounded and for which $||df|_{x_n}|| \rightarrow 0$, then $\{ x_n \}$ has a convergent subsequence $\{x_{n_k} \} \rightarrow p$
	\end{list}
	
	\begin{rem}
	\begin{enumerate}
	\item	If $M$ is finite dimensional and compact then for \underline{any} choice of Riemannian metric for $M$ the completeness, the boundedness below and the Condition (C) assumptions are automatically satisfied.  Note also that if $M$ is finite dimensional but not necessarily compact then Condition (C) for a smooth real-valued function is satisfied automatically for proper maps.
	  
	\item Condition (C) is a condition on $f$ that for many purposes can replace the compactness of the manifold.  As a rule in extending finite dimensional results in differential topology to infinite dimensions, we transfer the compactness condition from the space $M$ itself to the function on $M$.
		\end{enumerate}
	\end{rem}
	The Global (Un)Stable Manifold Theorem, Theorem \ref{W^s submfld}, is an important result that tells us that the sets $W^s(p)$ and $W^u(p)$ are (immersed) submanifolds of $M$ that have the same codimension as the stable and unstable subspaces, respectively, of the linearization of $f$ at $p$.  The proof of Theorem \ref{W^s submfld} is an adaptation of the proof presented in \cite[Chapter $1$, $\S 1.7$]{BCL06}.

\begin{lem}[The Global (Un)Stable Manifold Theorem]
	\label{W^s submfld}

	Let $M$ be a Hilbert manifold.  Let $f \colon M \rightarrow \mathbb{R}$ be a Morse function and let $p \in Crit(f)$.  Fix a Riemannian metric on $M$ such that the negative gradient 
	vector field of $f$ is standard near $p$. Then $W^s(p)$ is a connected submanifold of $M$ of codimension equal to $index_{p}(f)$ and $W^u(p)$ is a connected submanifold of $M$ of codimension equal to $coindex_{p}(f)$.
\end{lem}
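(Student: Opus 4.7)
My plan is to identify $W^s(p)$ locally near $p$ using the standard-form hypothesis (which trivializes the usual analytic local stable manifold theorem) and then globalize by pushing the local stable manifold backward along the negative gradient flow. The argument for $W^u(p)$ will be symmetric, with the direction of time reversed (equivalently, apply the argument to $-f$, whose index at $p$ equals $\mathrm{coindex}_p(f)$).

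\emph{Local picture.} By hypothesis $-\nabla_g f$ is standard near $p$, so there is a chart $\phi\colon U_p\to B_+\times B_-\subset\mathbb{H}=\mathbb{H}_+\oplus\mathbb{H}_-$ that is an isometric Morse chart on which Lemma \ref{std flow} conjugates $\rho_t^M$ to the model flow $\rho_t^{\mathbb{H}}(x_+,x_-)=(e^{-2t}x_+,e^{2t}x_-)$. Reading off this formula, the set of $y\in U_p$ whose forward trajectory stays in $U_p$ for all $t\geq 0$ and converges to $p$ is exactly $W^s_{\mathrm{loc}}(p):=\phi^{-1}(B_+\times\{0\})$. This is a connected smooth submanifold of $M$ diffeomorphic to the open ball $B_+$, of codimension $\dim\mathbb{H}_-=\mathrm{index}_p(f)$.

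\emph{Globalization via the flow.} Given $x\in W^s(p)$, the forward trajectory $\rho_t^M(x)$ converges to $p$, so there is $T_x\geq 0$ with $y_x:=\rho_{T_x}^M(x)\in W^s_{\mathrm{loc}}(p)$; in particular $x=\rho_{-T_x}^M(y_x)$ exists, which means $y_x\in D_{-T_x}$. By Theorem \ref{cont ODE} the map $\rho_{-T_x}^M$ is a smooth diffeomorphism of $D_{-T_x}$ onto $D_{T_x}$, so it carries an open neighborhood of $y_x$ in $W^s_{\mathrm{loc}}(p)$ onto a slice of codimension $\mathrm{index}_p(f)$ through $x$ in $M$. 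By the explicit model, a nearby point $x'$ in $M$ belongs to $W^s(p)$ if and only if $\rho_{T_x}^M(x')\in W^s_{\mathrm{loc}}(p)$, so this slice is precisely $W^s(p)$ near $x$. Uniqueness of maximal solution curves (Lemma \ref{max soln curve}) guarantees that the charts obtained from different choices of $T_x$ and of starting point are mutually compatible on overlaps, and hence assemble $W^s(p)$ into a smooth submanifold of $M$ of the asserted codimension.

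\emph{Connectedness and the unstable case.} Every such chart around $x$ lies in the image under $\rho_{-T_x}^M$ of $W^s_{\mathrm{loc}}(p)$, and all these images contain $W^s_{\mathrm{loc}}(p)$ itself, which is connected. Running the flow backward from $W^s_{\mathrm{loc}}(p)$ continuously deforms it through connected sets, so $W^s(p)$ is connected. Replacing $f$ by $-f$ interchanges the roles of $\mathbb{H}_+$ and $\mathbb{H}_-$ and turns stable into unstable sets, yielding the corresponding statement for $W^u(p)$ with codimension $\mathrm{coindex}_p(f)$.

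\emph{Main obstacle.} The delicate step is the globalization paragraph: one must verify that the submanifold charts produced by backward-flowing $W^s_{\mathrm{loc}}(p)$ genuinely realize $W^s(p)$ as an embedded submanifold of $M$ rather than merely the image of an injective immersion with possible accumulation. The essential ingredients are injectivity of the flow maps (Lemma \ref{max soln curve}) and the fact that in the standard model the condition ``forward trajectory converges to $p$'' is a closed, linear condition ($x_-=0$) readable off the explicit flow. Without the standard-form hypothesis one would have to invoke an honest analytic local stable manifold theorem in Hilbert space to produce $W^s_{\mathrm{loc}}(p)$; the standard-form assumption is precisely what lets us sidestep that work.
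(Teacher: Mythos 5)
Your proof is correct and follows essentially the same route as the paper: identify $W^s_{loc}(p)$ as $\phi^{-1}(B_+\times\{0\})$ via the standard-form chart and Lemma \ref{std flow}, then globalize by the backward flow diffeomorphisms of Theorem \ref{cont ODE} using the equivalence $x'\in W^s(p)\cap(\rho^M_T)^{-1}(U)\Leftrightarrow\rho^M_T(x')\in W^s_{loc}(p)$, with the unstable case obtained by reversing the sign of the gradient field. The embeddedness subtlety you flag is likewise left at the same level of detail in the paper, which treats $W^s(p)$ as inheriting its manifold structure from $W^s_{loc}(p)$ through the flow maps.
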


\begin{proof} 
	Let $p \in Crit(f)$ and let $U \subset M$ be a neighbourhood of $p$.  Let $\rho^M_t$ be the negative gradient flow of $f$ on $M$.

	The local stable set of $p$ (relative to $U$) is defined as the set 
	\begin{eqnarray*} 
	W_{loc}^s(p) & := & \{ x \in U \hspace{2mm} | \hspace{2mm} \rho_t^M(x) \textrm{ is defined for all $t \geq 0$, $\rho_t^M(x) \in U \hspace{2mm} \forall \hspace{1mm} t \geq 0$ and } \lim_{t \rightarrow \infty} \rho_{t}^M(x) = p \} \\
						& = & \{ x \in U \hspace{2mm} | \hspace{2mm} \rho_t^U(x) \textrm{ is defined for all $t \geq 0$ and } \lim_{t \rightarrow \infty} \rho_{t}^U(x) = p \} \
	\end{eqnarray*}

	\noindent where $\rho^M_U$ is the negative gradient flow of $f$ on $U$.
		
	Let $D^U_t$ be the domain of definition of $\rho^U_t$.  Then $W_{loc}^s(p)$ may be equivalently expressed as the set $$\{ x \in U \hspace{2mm} | \hspace{2mm} x \in D^U_t \textrm{ for all $t$ } \geq 0 \textrm{ and } \lim_{t \rightarrow \infty} \rho_{t}^U(x) = p \} $$
	Similarly, the local unstable set of $p$ (relative to $U$) is defined as the set
	\begin{eqnarray*} 
	W_{loc}^u(p) & := & \{ x \in U \hspace{2mm} | \hspace{2mm} \rho_t^U(x) \textrm{ is defined for all $t \leq 0$ and } \lim_{t \rightarrow -\infty} \rho_{t}^U(x) = p \}. \\
						& = & \{ x \in U \hspace{2mm} | \hspace{2mm} x \in D^U_t \textrm{ for all $t \leq 0$ and } \lim_{t \rightarrow -\infty} \rho_{t}^U(x) = p \}. \
	\end{eqnarray*}
	
	Note that $W_{loc}^s(p) \subseteq W^s(p)$ and  $W_{loc}^u(p) \subseteq W^u(p)$.  Moreover, $W_{loc}^s(p)$ and $W_{loc}^u(p)$ are both nonempty since they each contain $p$.
	
	Let $\mathbb{H}$ be a Hilbert space and let $\mathbb{H}_\pm \subset \mathbb{H}$ be closed subspaces such that $\mathbb{H} = \mathbb{H}_+ \oplus \mathbb{H}_-$.  We shall identify a neighbourhood of $p$ with a neighbourhood of $0$ in $\mathbb{H}$.  Let $\phi \colon U \rightarrow \mathbb{H}$ be a Morse chart with properties:

	$\bullet \hspace{2mm} \phi$ is an isometry, and \\
	\indent $\bullet \hspace{2mm} \phi(U) = B_+ \times B_-$ where $B_\pm \subset \mathbb{H}_\pm$ are unit balls in $\mathbb{H}_\pm$ respectively.\
	
	Note that we have $$\mathbb{H}_{+} := \{ x \in \mathbb{H} \hspace{2mm} | \hspace{2mm} \lim_{t \rightarrow \infty} \rho_t^{\mathbb{H}}(x) = 0 \}$$ $$\mathbb{H}_{-} := \{ x \in \mathbb{H} \hspace{2mm} | \hspace{2mm} \lim_{t \rightarrow - \infty} \rho_t^{\mathbb{H}}(x) = 0 \}$$ 
	\noindent where $\rho_t^\mathbb{H}(x) = ||x_+||^2 - ||x_-||^2.$
	
	\medskip
		 \[ \xymatrix{
                   &
 M \supset U \ar[rd]_{f} \ar[rr]^(.4){\phi} &                   & B_+ \times B_- \subset \mathbb{H}_+ \oplus \mathbb{H}_- \ar[ld]^{||x_+||^2-||x_-||^2}
      \\
                 &&  \mathbb{R}                                                                          && }
                    \]
	\medskip
	
	It follows that $W^{\mathbb{H},s}\left( \phi (p) \right) = W^{\mathbb{H},s}(0) = \mathbb{H}_+$ and $W^{\mathbb{H},u}\left( \phi (p) \right) = W^{\mathbb{H},u}(0) = \mathbb{H}_-$.

	The proof of this Lemma requires that:
	
	\noindent \underline{Step 1}:  We must show that  $W^s_{loc}(p)$ ( respectively, $W^u_{loc}(p)$ ) is a manifold. \\
	\noindent \underline{Step 2}:  We must extend the local results of Step 1 to $W^s(p)$ ( respectively $W^u(p)$ ). \\

	Step 1:  We wish to show that the set $W^s_{loc}(p)$ is a submanifold of $U$.
	
	By Lemma \ref{std flow}, recall that $\phi$ intertwines the flow on $U \subset M$ with the flow on $B_+ \times B_- \subset \mathbb{H}$.  More precisely, $\phi \colon U \rightarrow B_+ \times B_-$ is a diffeomorphism such that for all $t \in \mathbb{R}$ and for any $m \in U_p \cap (\rho^M_t)^{-1} (U_p) $ we have that 
	\begin{eqnarray*}
	\phi \left( \rho_t^{M} (m) \right) & = & \rho_t^{\mathbb{H}} \left( \phi (m) \right) \\
				& = & \rho_t^{\mathbb{H}} (x_+, x_- ) \textrm{ because $\phi(m) = ( x_+, x_-) \in B_+ \times B_-$} \\
				& = & ( e^{-2t}x_+, e^{2t}x_- ) \textrm{, by Lemma \ref{flow on H}.}
	\end{eqnarray*}
	
 \noindent Thus, it is sufficient to show that $W^{\mathbb{H},s}_{loc} \left( \phi (p) \right) = B_+ \times \{ 0 \}$. 
 
\medskip				
				$$ \xymatrix{
                            & \mathbb{R} \cup  U  \ar[r]^(0.35){\textrm{Id} \times \phi}
                            & \mathbb{R} \times B_+ \times B_-         \\ 
                            & { | \bigcup} & { | \bigcup} 												  \\
                             & D_t \ar[d] \ar[r]^{\cong} & D^{\mathbb{H}}_t \ar[d] \\
                            & U  \ar[r]^{ \phi} 
                            & B_+ \times B_- \\
                                & { | \bigcup} & { } 												  \\
  							& W^s_{loc}(p)  &  B_+ \times \{0\} \ar[uu]_{i} 
 } $$ 
		\medskip 

	\noindent Note that $W^{\mathbb{H},s}_{loc} \left( \phi (p) \right) \subseteq W^{\mathbb{H},s} \left( \phi (p) \right)$. Moreover, recall that  $W^{\mathbb{H},s} \left( \phi (p) \right) = \mathbb{H}_+$ and that $W^{\mathbb{H}, s}_{loc} \left( \phi(p) \right) = W^{\mathbb{H},s} \left( \phi(p) \right) \cap \phi(U)$.  Therefore,  
	 \begin{eqnarray*}
	W^{\mathbb{H},s}_{loc} \left( \phi (p) \right) & = & \mathbb{H}_+ \cap ( B_+ \times B_- ) \\
												& = & B_+ \times \{ 0 \} \
	\end{eqnarray*}
 	\noindent as wanted.  By the properties of $\phi$, observe that $W^s_{loc} \left( \phi (p) \right)$ is connected.
	
	\noindent Therefore $W^{s}_{loc}(p)$ is a connected submanifold of $M$ which contains $p$ with codimension $index_{p}(f)$.

	Step 2:  By using $\rho^M_t$, the negative gradient flow of $f$ on $M$, we wish to extend the local results of Step 1 to the global stable manifolds $W^s(p)$ and $W^u(p)$. 
	
Fix an $x \in M$.  	Fix a time $T \in \mathbb{R}$.  Suppose that $\rho^M_T \colon ( \rho^M_T )^{-1}(U) \rightarrow U \cap D^M_{-T}$ is a diffeomorphism.
	
	\medskip
		$$ \xymatrix{
                            &D_T^M \ar@<1ex>[r]^{\rho^M_T}  
                            & D_{-T}^M \ar@<1ex>[l]^{\rho^M_{-T}}    \\  
                            & { \bigcup} & {\bigcup} \\
  							& (\rho^M_T)^{-1}(U) \ar[r]^{\rho^M_T} & U \cap D^M_{-T} 
 } $$ 
	\medskip

\noindent Note that the set $( \rho^M_T )^{-1}(U) \subseteq M$ is open because $\rho^M_T$ is continuous.  To prove Step 2 it is enough to show that $W^s(p) \cap  ( \rho^M_T )^{-1}(U)$ is a submanifold of $M$.
		\medskip				

				$$ \xymatrix{
                            & \hbox{$\overbrace{(\rho^M_T)^{-1}(U)}^{\textrm{open}}$} \ar[r]^{\rho^M_T}_{\cong}  
                            & \hbox{$\overbrace{U \cap D_{-T}^M }^{\textrm{open}}$} \\
                            & { | \bigcup} & { | \bigcup} 												  \\
  							& W^s(p) \cap (\rho^M_T)^{-1}(U) \ar[r]^{\rho^M_T}_{\cong}   & W^s_{loc}(p) \cap D^M_{-T} 
 } $$

		\medskip

	We claim that:  $$q \in W^s(p) \cap \left( \rho_T^M \right) ^{-1}(U) \textrm{ if and only if } \rho_T^M(q) \in W^s_{loc}(p).$$  
	
	\noindent It will follow from the claim that the image under $\rho^M$ of $W^s(p) \cap \left( \rho_T^M \right) ^{-1}(U)$  is equal to the submanifold $W^s_{loc}(p) \cap \left( U \cap D^M_{-T} \right)$.   In other words, the set $W^s(p)$ inherits the structure of a manifold from that of $W^s_{loc}(p)$ by the set of maps $\{ \rho^M( t, \cdot ) \}$.  Therefore $W^{s}(p)$ is a connected submanifold of $M$ which contains $p$ with codimension $index_{p}(f)$. 
	
	Proof of claim:  $(\Rightarrow)$ Let $q \in W^s(p) \cap \left( \rho_t^M \right)^{-1}(U)$.  Then $q \in W^s(p)$ and $q \in \left( \rho_t^M \right)^{-1}(U)$.  This implies, respectively, that $\rho_t^M(q) \in W^s(p)$ and $\rho_t^M(q) \in U$.   So $\rho_t^M(q) \in W^s(p) \cap U$.  But $W^s(p) \cap U = W^s_{loc}(p)$ (this follows from the fact that all entry values of $f$ (with respect to $\rho^M$) are bigger than all exit values.  This fact appeared in the proof of Lemma \ref{std flow}).
	
	$(\Leftarrow)$ Let $\rho^M_T(q) \in W^s_{loc}(p)$.  That is, $ q \in \rho^M_{-T} \left( W^s_{loc}(p) \right)$.  However \begin{eqnarray*}
	\rho^M_{-T} \left( W^s_{loc}(p) \right) & = & \left( \rho_T^M \right) ^{-1}\left( W^s_{loc}(p) \right)\textrm{ , by Theorem \ref{cont ODE}  $(\rho^M_{T})^{-1}=\rho^M_{-T}$} \\
											& = & (\rho_T^M )^{-1} \left( W^s(p) \cap U \right) \\
											& = & W^s(p) \cap (\rho_T^M )^{-1}(U) \
	\end{eqnarray*} 
	
\noindent Thus $q \in W^s(p) \cap (\rho_T^M )^{-1}(U)$, and completing the proof of the claim.

	It follows that $W^s(p)$ is a submanifold of $M$.
	
	The analogous results for $W^u(p)$ follows by giving all of the same arguments as above but by considering the vector field $\nabla_gf$ (instead of $-\nabla_gf$).
\end{proof}

\begin{rem}  Both a Local (Un) Stable Manifold theorem and a Global (Un)Stable Manifold theorem for Banach manifolds exist in the literature (\cite[Chapter 1]{BCL06}, \cite[Chapters 5 and 6]{MS87}).  These references do \textbf{not} assume that the vector field is standard a point in the manifold.  Let us briefly review what is known:  
\begin{enumerate}
\item Known proofs of the \underline{Local} (Un)Stable Manifold theorem are based on methods such as the \textit{``graph transform method''} or the \textit{``orbit space method''}.  A brief description of these methods is provided below. 
	
	\begin{list}{}{}
	\item $\bullet$ For detailed information on the so called \textit{``graph transform method''} see \cite{MS87}; 1987, Chapter 5.  The Hadamard approach, this so called ``graph transform method", to proving the Local (Un)Stable Manifold theorem uses what is known as a graph transform.  This method constructs the stable and unstable manifolds as graphs over the linearized stable and unstable spaces, respectively.
	This method is more geometrical in nature than the next Liapunov-Perron orbit space method.
	\item $\bullet$ For detailed information on the so called \textit{``orbit space method''} see \cite{BCL06}: Chapter 1.  The Liapunov-Perron orbit space method is another approach used to  prove the Local (Un)Stable Manifold theorem.   This method (in the context of ordinary differential equations) deals with the integral equation formulation of the ordinary differential equations and constructs the invariant manifolds as a fixed point of an operator that is derived from the integral equation of a function whose elements have the appropriate interpretations as stable and unstable manifolds.
	\end{list} 
	
\item A complete proof for the \underline{Global} (Un)Stable Manifold Theorem is also given in \cite{BCL06}: Chapter 1, Section $\S$ 1.7.  This proof identifies $W^s(p)$ and $W^u(p)$ as particular images of injective immersions of manifolds.  Note, again, that all of the aforementioned results are established for  \textit{Banach} manifolds.  In particular they are true for Hilbert manifolds.  Their proofs become simpler in the Hilbert manifold setting. For example, if $M$ is a Hilbert manifold in the Global (Un)Stable Manifold Theorem \cite{BCL06}, then the regularity of the norm implies that $W^s(p)$ and $W^u(p)$  are actually images of  the tangent space to $W^s(p)$, say $E^s_p$, and the tangent space to $W^u(p)$, say $E^u_p$, (respectively) where $T_pM = E^s_p \oplus E^u_p$.
\end{enumerate}
\end{rem}

\begin{lem}
	\label{transverse}
	
	Let $M$ be a Riemannian Hilbert manifold and $f \colon M \rightarrow \mathbb{R}$ a Morse function.  Let $x$ be a regular point for $f$.  Fix a Riemannian metric on $M$ such that for every critical point $p$ of $f$ the negative gradient vector field of $f$ with respect to that Riemannian metric is standard near $p$.  Then there exists a neighbourhood $U_x$ of $x$ in $M$ such that $U_x \cap f^{-1}\left(  f(x) \right) $ is a manifold.  Moreover, let $p \in Crit(f)$.  Then, after possibly shrinking $U_x$, the set $\left( U_x \cap f^{-1}\left(  f(x) \right) \right)  \cap W^s(p)$ is a submanifold of $U_x \cap f^{-1}\left(  f(x) \right) $ with codimension equal to $index_{p}(f)$.  This submanifold either passes through $x$ or is empty.

\end{lem}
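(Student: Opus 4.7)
The plan is to establish the two assertions separately. The existence of $U_x$ with $U_x \cap f^{-1}(f(x))$ a submanifold is immediate from the Implicit Function Theorem for Hilbert manifolds (cited in the excerpt): since $x$ is a regular point, $df|_x$ is surjective onto $\mathbb{R}$, so in some neighbourhood $U_x$ of $x$ the set $U_x \cap f^{-1}(f(x))$ is a smooth codimension-one submanifold of $M$. This handles the first claim.

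For the second claim, the heart of the argument is a transversality statement: at every point $y \in f^{-1}(f(x)) \cap W^s(p)$, the submanifolds $W^s(p)$ and $f^{-1}(f(x))$ meet transversally in $M$. To see this, I would argue as follows. Since $f(y) = f(x)$ is a regular value, $\nabla_g f|_y \neq 0$, and $T_y f^{-1}(f(x)) = \ker(df|_y)$ is a closed hyperplane of codimension $1$ in $T_yM$ not containing $\nabla_g f|_y$ (because $df|_y(\nabla_g f|_y) = \|\nabla_g f|_y\|^2 > 0$). On the other hand, $y \in W^s(p)$ with $y \neq p$ (as $p$ is critical and $y$ is not), so the negative gradient trajectory through $y$ is a smooth curve contained in $W^s(p)$ whose velocity at $t = 0$ is $-\nabla_g f|_y$; hence $\nabla_g f|_y \in T_y W^s(p)$. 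Combining these two facts gives $T_y W^s(p) + T_y f^{-1}(f(x)) = T_yM$, which is transversality.

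Given this, standard transversality theory (which goes through in the Hilbert setting by the Implicit Function Theorem, since the summand $T_y f^{-1}(f(x))$ is a closed complemented hyperplane) shows that $W^s(p) \cap f^{-1}(f(x))$ is a submanifold of $f^{-1}(f(x))$ near $y$ whose codimension there equals the codimension of $W^s(p)$ in $M$. By Theorem \ref{W^s submfld} that codimension equals $\mathrm{index}_p(f)$, giving the stated codimension.

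It remains to arrange the dichotomy ``passes through $x$ or is empty''. If $x \in W^s(p)$ the preceding paragraph (applied with $y = x$) directly exhibits $U_x \cap f^{-1}(f(x)) \cap W^s(p)$ as a submanifold through $x$ of the required codimension, after possibly shrinking $U_x$. If $x \notin W^s(p)$, I would use the construction of $W^s(p)$ in the proof of Theorem \ref{W^s submfld} as an ascending union $\bigcup_{T \geq 0} \rho^M_{-T}(W^s_{\mathrm{loc}}(p))$ of images of a locally closed embedded piece under finite-time flow diffeomorphisms, together with the fact that values of $f$ decrease strictly along nonconstant trajectories: the condition $f = f(x)$ restricts the possible exit times $T$ from a small neighbourhood of $x$ and forces $x$ to lie in $W^s(p)$ if it is accumulated by $W^s(p) \cap f^{-1}(f(x))$, a contradiction. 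Thus some smaller $U_x$ meets neither set, and the intersection is empty. The main obstacle I anticipate is this last step: making rigorous the ``no accumulation'' claim in the immersed (possibly non-embedded) setting, where I expect to lean on the local standard form from Lemma \ref{std flow} and on the fact that $W^s_{\mathrm{loc}}(p)$ is embedded in $U_p$.
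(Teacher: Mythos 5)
Your proof takes essentially the same route as the paper's: the Implicit Function Theorem produces $U_x \cap f^{-1}(f(x))$, and transversality of $W^s(p)$ with the level set is witnessed by the gradient vector, which is tangent to $W^s(p)$ along the trajectory and not annihilated by $df$, with the codimension count coming from Lemma \ref{W^s submfld}. You are in fact somewhat more thorough than the paper, which verifies transversality only at the single point $x$ (asserting ``hence, near $x$'') and does not address the ``or is empty'' alternative at all; your concern about ruling out accumulation of $W^s(p)\cap f^{-1}(f(x))$ at a point $x\notin W^s(p)$ is a real issue that the paper's proof passes over in silence.
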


\begin{proof}
	By the Implicit Function theorem we know that there exists a neighbourhood $U_x$ of $x$ such that $U_x \cap f^{-1}\left(  f(x) \right) $ is a smooth manifold and that 
	$$T_{x} f^{-1}\left(  f(x) \right) = ker(df|_{x} :T_{x}M \rightarrow \mathbb{R}).$$  
	
	Let $p \in Crit(f)$.  If $W^s(p) \cap \{ x \} \neq \emptyset$ then we claim that $U_x \cap f^{-1}( f(x) )$ is transverse to $W^s(p)$ at $x$ (hence, near $x$).  By the definition of transversality, it suffices to find a $v \in T_{x}W^{s}$ such that $d_{x}f(v) \neq 0$.  Take $v = - \nabla_g f_x$, the negative $g$-gradient of $f$ at $x$.  
	
	From transversality, it follows that after possibly shrinking the neighbourhood $U_x$, the set $\left( U_x \cap f^{-1}\left(  f(x) \right)\right)  \cap W^{s}(p)$ is a smooth submanifold of $U_x \cap f^{-1}\left(  f(x) \right) $ and that the codimension of 	$\left( U_x \cap f^{-1}\left(  f(x) \right)\right)  \cap W^{s}(p)$ in $U_x \cap f^{-1}\left(  f(x) \right) $ is equal to the codimension of $W^{s}(p)$ in $M$.  This codimension is equal to $index_{p}(f)$.
\end{proof}

	Recall that $M$ is a connected Riemannian Hilbert manifold and $f \colon M \rightarrow \mathbb{R}$ a Morse function.  Fix a Riemannian metric on $M$ such that $f$ is bounded from below and satisfies Condition (C).  Let $\{ p_i \}$, $ i \in I$ be the set of critical points of index equal to $0$.  Define $$M_{0} := \bigsqcup_{i \in I} W^s(p_i).$$  Thus, $M_0$ is the disjoint union of the (open) stable manifolds with index zero.  

\begin{lem}
	\label{complement}
	
	Let $M$ be a complete connected Riemannian manifold and $f \colon M \rightarrow \mathbb{R}$ a Morse function that is bounded from below.  Fix a Riemannian metric on $M$ such that $f$ satisfies Condition (C) and that for every critical point $p$ of $f$ the negative gradient vector field of $f$ with respect to the Riemannian is standard near $p$.  Suppose that none of the critical points of $f$ have index equal to $1$. Then the complement of $M_0$ is a locally finite union of submanifolds of codimension at least two.
\end{lem}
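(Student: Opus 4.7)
The plan is to express $M \setminus M_0$ as the disjoint union of the stable manifolds $W^s(p)$ attached to critical points of positive index, use the Global (Un)Stable Manifold Theorem (Lemma \ref{W^s submfld}) to supply the codimension bound, and use Condition (C) together with the monotonicity of $f$ along trajectories to establish local finiteness. The crucial preliminary is that every trajectory of $-\nabla_g f$ converges to a critical point, i.e.\ $M = \bigsqcup_{p \in Crit(f)} W^s(p)$.

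For the preliminary, fix $x \in M$. Since $f$ is monotone decreasing along $\rho^M_t(x)$ and bounded below, $L := \lim_{t\to\infty} f(\rho^M_t(x))$ exists and
\begin{equation*}
\int_0^\infty \|\nabla_g f(\rho^M_t(x))\|^2\,dt \;=\; f(x) - L \;<\; \infty.
\end{equation*}
The resulting arclength bound together with completeness of $g$ shows the flow is defined for all $t \geq 0$, and there is a sequence $t_n \to \infty$ with $\|df|_{\rho^M_{t_n}(x)}\| \to 0$. By Condition (C) a subsequence converges to some $p \in Crit(f)$, necessarily with $f(p) = L$. To upgrade this to $\rho^M_t(x) \to p$, I would pick a standard Morse chart at $p$ identifying a neighbourhood $V$ with $B_+ \times B_-$; for large $k$ the trajectory lies in $V$, and if it ever exits $V$ at some later time $s$ then by the entry/exit analysis in the proof of Lemma \ref{std flow} one has $f(\rho^M_s(x)) < f(p) = L$, contradicting $f(\rho^M_s(x)) \geq L$. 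So the trajectory stays in $V$ for all large $t$; in standard coordinates $(x_+, x_-) \mapsto (e^{-2t}x_+, e^{2t}x_-)$, this forces the unstable component to vanish, so the trajectory lies on $W^s_{loc}(p)$ and converges to $p$.

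Granted the decomposition $M = \bigsqcup_{p} W^s(p)$, removing the index-zero contribution and invoking the hypothesis that no critical point has index $1$ gives
\begin{equation*}
M \setminus M_0 \;=\; \bigsqcup_{\substack{p \in Crit(f) \\ \textrm{index}_p(f) \geq 2}} W^s(p),
\end{equation*}
and each $W^s(p)$ is a connected submanifold of $M$ of codimension $\textrm{index}_p(f) \geq 2$ by Lemma \ref{W^s submfld}. For local finiteness, the key observation is that $f \geq f(p)$ on $W^s(p)$, since $f$ decreases monotonically to $f(p)$ along every trajectory in $W^s(p)$. Given $x_0 \in M$, the open neighbourhood $U := \{y \in M : f(y) < f(x_0) + 1\}$ can only meet those $W^s(p)$ with $f(p) < f(x_0) + 1$. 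The set of critical points satisfying this bound has $|f|$ bounded (using $\inf f > -\infty$) and $\|df\| = 0$, so Condition (C) makes it precompact; since $Crit(f)$ is discrete by the Morse Lemma, this set is finite. Hence only finitely many $W^s(p)$ meet $U$, giving local finiteness.

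The principal obstacle is the convergence argument in the preliminary step: Palais-Smale alone only supplies subsequential limits of $\rho^M_t(x)$, and a gradient trajectory can in general accumulate at an isolated critical point without converging to it. The standardness hypothesis on $-\nabla_g f$ near each critical point is exactly what turns accumulation into genuine convergence, via the strict inequality $f < f(p)$ at any exit point of a standard neighbourhood, which is incompatible with the monotone decrease of $f$ to $f(p)$.
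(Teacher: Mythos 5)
Your proof is correct and follows essentially the same route as the paper's: decompose $M \smallsetminus M_0$ as the union of the stable manifolds of critical points of index at least two, get the codimension bound from Lemma \ref{W^s submfld}, and get local finiteness from the finiteness of the set of critical points below any given level. The only difference is that the paper simply cites Palais for the two inputs you prove by hand (every negative gradient trajectory is defined for all positive time and converges to a critical point, and only finitely many critical points lie in any band of $f$-values), so your argument is a self-contained expansion of the same proof.
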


\begin{rem}
	Recall that a collection of subsets of a topological space is said to be \textbf{locally finite}, if each point in the space has a neighbourhood that intersects only finitely many of the sets in the collection.  
\end{rem}

\begin{proof}
	From Palais \cite{RP63} we know that: 
	
\begin{list}{}{}
\item{(i)} $\hspace{2mm}$ (Prop. $1$ pg.$314$) if $a$, $b \in \mathbb{R}$ then there is at most a finite number of critical points $p$ of $f$ that satisfy $a < f(p) < b$.
\item{(ii)} $\hspace{2mm}$ (Prop. $3$ pg.$321$) if $\sigma_{t}(x)$ is any maximal solution curve  of $- \nabla_g f$ starting at the point $x$, then $\sigma_t(x)$ is defined for all $t > 0$, and $\displaystyle \lim_{t \rightarrow \infty} \sigma_{t}(x)$ exists and is a critical point of $f$.
\end{list} 

	Note that for each $c \in\mathbb{R}$, the set $\{ \hspace{1mm} x \in M \hspace{1mm} | \hspace{1mm} f(x) < c \hspace{1mm} \}$ is open in $M$ because $f$ is continuous.  Moreover, each point $x \in M$ is contained in at least one of these sets.  Thus for all $ c \in \mathbb{R}$,

	$$\{ \hspace{1mm} x \in M \hspace{1mm} | \hspace{1mm} f(x) < c \hspace{1mm} \} \cap \left( M \smallsetminus M_0 \right) \underbrace{=}_{ \textrm{by $(ii)$} } \{ \hspace{1mm} x \in M \hspace{1mm} | \hspace{1mm} f(x) < c \hspace{1mm} \} \cap 
	\bigcup_{p \in a}W^s(p).$$
where $a=Crit(f)$ such that $index_p(f) \geq 2$  and $f(p) < c$ by $(i)$ the union is finite.

But recall, by Lemma \ref{W^s submfld} we know that codim$\left(W^s(p) \right) = \textrm{ index}_p(f)$,which is greater than or equal to two.  Therefore, $M \smallsetminus M_0$ is a locally finite union of submanifolds with codimension at least two. 	
\end{proof}

\begin{lem}
	\label{M_0 connected}
	
Let $M$ be a complete connected Riemannian manifold and $f \colon M \rightarrow \mathbb{R}$ a Morse function that is bounded from below.  Fix a Riemannian metric on $M$ such that $f$ satisfies Condition (C) and that for every critical point $p$ of $f$ the negative gradient vector field of $f$ with respect to the Riemannian is standard near $p$.  Suppose that none of the critical points of $f$ have index equal to $1$.  Then $M_0$ is connected.
\end{lem}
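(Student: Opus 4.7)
The plan is to identify $M_0$ as the complement in the connected manifold $M$ of a locally finite union of codimension-$\geq 2$ submanifolds, and then deduce connectedness by a path-perturbation argument.

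First I would verify that every point of $M$ lies in some $W^s(p)$. By property (ii) cited in the proof of Lemma \ref{complement} (Palais Prop.~3, \cite{RP63}), every maximal $-\nabla_g f$-trajectory $\sigma_t(x)$ is defined for all $t \geq 0$ and converges as $t \to \infty$ to a critical point of $f$. This yields the disjoint decomposition $M = \bigsqcup_{p \in Crit(f)} W^s(p)$. Using the hypothesis that no critical point has index $1$, it follows that
\[
M \smallsetminus M_0 \;=\; \bigcup_{\substack{p \in Crit(f) \\ index_p(f) \geq 2}} W^s(p),
\]
which by Lemma \ref{complement} is a locally finite union of submanifolds of codimension at least $2$. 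Note also that $M_0$ is open in $M$: by Lemma \ref{W^s submfld}, each $W^s(p_i)$ with $index_{p_i}(f) = 0$ has codimension $0$ in $M$ and is therefore open.

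Next I would prove the general topological fact: if $N$ is a connected Hilbert manifold and $A \subset N$ is a locally finite union of submanifolds each of codimension $\geq 2$, then $N \smallsetminus A$ is path connected. Applied with $N = M$ and $A = M \smallsetminus M_0$, this gives the lemma. To prove the fact, take $x, y \in N \smallsetminus A$ and, using that the connected manifold $N$ is path connected, pick a path $\gamma \colon [0,1] \to N$ from $x$ to $y$. Compactness of $\gamma([0,1])$ together with local finiteness forces $\gamma$ to meet only finitely many of the submanifolds in $A$, say $S_1, \ldots, S_k$. Cover $\gamma([0,1])$ by finitely many Hilbert charts in which each relevant $S_j$ appears as a closed submanifold of codimension $\geq 2$ of the Hilbert model $\mathbb{H}$. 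In each such chart one uses a tubular neighbourhood of $S_j$ to push $\gamma$ transversally off $S_j$: since the normal bundle of $S_j$ has fibers of dimension $\geq 2$, the complement of the zero section in the tubular neighbourhood is path connected, so the $1$-parameter image of $\gamma$ can be homotoped off $S_j$ while its endpoints stay fixed outside the chart. Performing these perturbations one submanifold at a time, with successive perturbations supported in shrinking neighbourhoods of the intersection points so as not to re-enter the previously avoided $S_\ell$'s, produces a path from $x$ to $y$ lying entirely in $N \smallsetminus A = M_0$.

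The hard part is this last perturbation step: one must carry out transversality in the Hilbert setting and check that successive perturbations remain disjoint from the finitely many submanifolds already avoided. This is handled by localizing through small enough tubular neighbourhoods and using closedness of each $S_j$, so that each earlier perturbation has a positive minimal distance (in the relevant charts) to the $S_\ell$'s it was designed to miss; each subsequent perturbation can then be chosen smaller than this distance. Once this is in place, the general fact is established, and $M_0$ is connected as claimed. As a byproduct, since $M_0 = \bigsqcup_{i \in I} W^s(p_i)$ is a disjoint union of \emph{open} subsets of $M$, connectedness of $M_0$ forces $|I| = 1$, i.e., there is in fact a unique critical point of index $0$.
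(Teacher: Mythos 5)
Your proposal is correct and shares the paper's top-level strategy: both reduce the lemma to the fact that the complement of a locally finite union of codimension-$\geq 2$ submanifolds in a connected manifold is (path) connected, using Lemma \ref{complement}. Where you genuinely diverge is in how that fact is established. The paper avoids transversality and tubular neighbourhoods entirely: it shows that every point $x \in M$ has a neighbourhood $U_x$ in which $U_x \cap M_0$ is path connected and dense (a purely local consequence of the codimension bound), covers an arbitrary path from $p$ to $q$ by a finite chain of such neighbourhoods with consecutive overlaps, picks intermediate points of $M_0$ in the overlaps, and concatenates local paths inside $M_0$. Your route instead perturbs the global path off the finitely many offending submanifolds one at a time, using tubular neighbourhoods and the path-connectedness of the complement of the zero section of a rank-$\geq 2$ normal bundle. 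Both work, but the step you yourself flag as the hard part --- carrying out transversal perturbations in the Hilbert setting and ensuring successive perturbations do not re-enter previously avoided stable manifolds, which are in general only injectively immersed and need not be closed --- is precisely the technical burden the paper's chaining argument is designed to sidestep; the local density statement requires no tubular neighbourhood and no iteration bookkeeping. What your version buys is a cleanly isolated general topological lemma, plus the explicit observation (implicit in the paper and used later via the notation $p_0$) that connectedness of the open disjoint union $M_0 = \bigsqcup_{i} W^s(p_i)$ forces the index-zero critical point to be unique.
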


\begin{proof}
Let $M_0$ be as in Lemma \ref{complement}.  Recall that $M_0 = \sqcup_{i \in I} W^{s}(p_i)$ where $\{ p_i \}$ $(i \in I)$ is the set of critical points of index equal to zero.  Lemma \ref{complement} ensures that $I \neq \emptyset$.  By hypothesis, no critical points of $f$ have index equal to $1$, so $M_{0}^{c}$ is a locally finite union of submanifolds of codimension at least 2 by Lemma \ref{complement}.  This implies that $M_0$ is connected.  We give more details:
	
	For each $x \in M$, there exists a neighbourhood $U_x$ of $x$ such that $U_x \cap M_0$ is path connected and dense in $U_x$. This can be established by using Lemma \ref{complement} and the definition of a submanifold.

	Let $p$, $q \in M_0$, $p \neq q$.  Let $\gamma \colon [0,1] \rightarrow M$ be such that $\gamma(0) = p$ and $\gamma(1) = q$.  Choose $U_{x_i}$ as above, $i = 0, \ldots , N-1$, such that  
	
	\begin{list}{}{}
	\item $\bullet$ the collection of $U_{x_i}$ cover the path $\gamma$, and
	\item $\bullet \hspace{2mm} U_{x_i} \cap U_{x_{i+1}} \neq \emptyset$ for all $i$, and
	\item $\bullet \hspace{2mm} p \in U_{x_0}$, $q \in U_{x_N}.$
	\end{list}
	
		\begin{figure}[htpb]
	\begin{center}
	\includegraphics{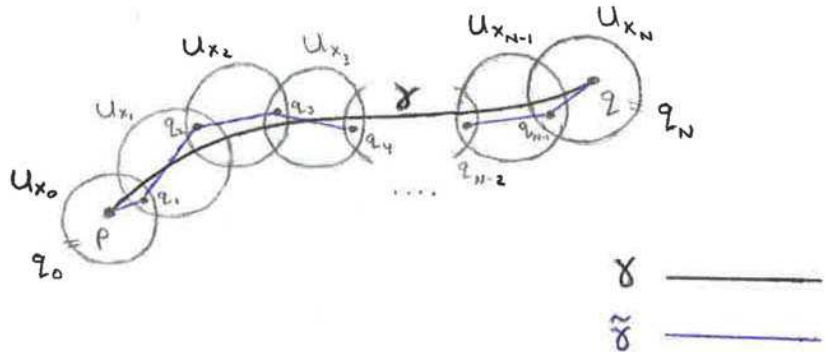}
	\end{center}
	\caption{ Construction of a path $\tilde{\gamma}$ which avoids $M_0^c$}
	\label{fig:cover}
	\end{figure}
					
	It follows that for all $x$, $U_{x_i} \cap U_{x_{i+1}} \cap M_0$ is nonempty.   Let $q_0 =p$, $q_N = q$.   For each $i = 0, \ldots, N-2$ choose a point $q_{i+1} \in U_{x_i} \cap U_{x_{i+1}} \cap M_0$.  For each $i = 0, \ldots , N-1$, we may construct a path $\gamma_{i+1}$ connecting $q_{i}$ to $q_{i+1}$ in $U_{x_i} \cap M_0$.  We can do so because $U_{x_i} \cap M_0$ is path connected.  Now so as to finish concatenate the $\gamma_{i+1}$ to construct a path $\tilde{\gamma} := \gamma_1 \gamma_2 \cdots \gamma_N$.  Notice that $\tilde{\gamma}$ is a path between $p$ and $q$ which does not intersect $M_0^c$, by construction. That is, $\tilde{\gamma}$ is a path in $M_0$. Hence, the open set $M_0 \subset M$ is path connected and so also connected.
	
\end{proof}

\begin{rem}
	\label{global min value}
	\begin{enumerate}
	\item In the set-up of Lemma \ref{M_0 connected}, $f$ attains its global minimum  since the critical point set of $f$ is discrete (see \cite[Section $\S 15$, 	Theorem $4$, Corollary $2$]{RP63}).

\end{enumerate}
\end{rem}

The following notation will from here on will be used throughout this thesis:  from remark \ref{global min value}, let $p_0 \in Crit(f)$ denote the unique critical point of $f$ with index zero and let $f(p_0) := c_0$ denote the global minimum value of $f$ on $M$.

\section{Connected Levels}

	Let us start with a couple of technical lemmas.  The first lemma provides a list of properties satisfied by a metric $g$ whose gradient vector field, $\nabla_g f$, is standard near a critical point $p$ of $f$.
	
\begin{lem}
	\label{standardproperties}
	
	Let $M$ be a complete connected Riemannian Hilbert manifold and $f \colon M \rightarrow \mathbb{R}$ a Morse function that is bounded from below and satisfies Condition (C).  Let $M_0$ be the open stable manifold with index zero.  Suppose that, for every critical point $p$ of $f$ not in $M_0$,  
	the Riemannian metric on $M$ is standard near $p$. Suppose that none of the critical points of $f$ have index equal to $1$.  Then for each $x \in M$ there exists a connected neighbourhood $U_x$ of $x$ such that
	\begin{list}{}{}
	\item[(i)] $U_{x} \cap M_0$ is open, connected, and dense in $U_x$, and
	\item[(ii)] if $x$ is a regular point of $f$ then for all $c \in \mathbb{R}$, $(U_{x} \cap M_0) \cap f^{-1}(c)$ is open, connected, and dense in $U_{x} \cap f^{-1}(c).$
	\end{list}	
\end{lem}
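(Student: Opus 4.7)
The plan is to combine Lemma \ref{complement} (which gives that $M \smallsetminus M_0$ is a locally finite union of submanifolds of codimension at least two) with a chart-based transversality argument. First I would take $U_x$ to be a connected chart neighbourhood of $x$, identified via its chart with an open ball $B$ in the Hilbert space $\mathbb{H}$, and shrink it so that only finitely many of the stable manifolds $W^s(p)$ with $index_p(f) \geq 2$ meet $U_x$. The local finiteness in Lemma \ref{complement} guarantees such a $U_x$ exists.

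For part (i), openness of $U_x \cap M_0$ is immediate since $M_0$ is open in $M$. Write
\[ U_x \smallsetminus M_0 \;=\; \bigcup_{j=1}^{N} \bigl( U_x \cap W^s(p_j) \bigr), \]
a finite union of submanifolds of $U_x$ of codimension at least two. Each piece has empty interior, so by Baire (Theorem \ref{Baire}) so does their union; hence $U_x \cap M_0$ is dense in $U_x$. For connectedness I would transport to the ball $B$ via the chart and argue that removing a finite union of closed codimension-$\geq 2$ submanifolds from $B$ leaves a path connected set: given two points of the complement, first connect them by a straight line in $B$, and then perturb generically in a direction transverse to the (finitely many) offending submanifolds. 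Because each piece has codimension at least two, a generic $1$-parameter perturbation of a path misses it.

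For part (ii), assume $x$ is a regular point of $f$. By the Implicit Function Theorem and the submersion form of $f$ at $x$, shrink $U_x$ further so that, in suitable coordinates, $U_x$ has the product form $V \times J$ with $f$ equal to projection onto the interval $J \ni f(x)$. Then $U_x \cap f^{-1}(c)$ is a Hilbert submanifold of $U_x$ diffeomorphic to $V$, for each $c \in J$. By Lemma \ref{transverse}, after shrinking again if necessary, each intersection $U_x \cap f^{-1}(c) \cap W^s(p_j)$ is either empty or a submanifold of $U_x \cap f^{-1}(c)$ of codimension $index_{p_j}(f) \geq 2$. Thus on each fixed level $U_x \cap f^{-1}(c)$ the complement of $M_0$ is again a finite union of closed submanifolds of codimension at least two, and the same Baire plus path-perturbation argument as in (i) --- now carried out inside the Hilbert ball $V$ --- yields openness, density and connectedness of $(U_x \cap M_0) \cap f^{-1}(c)$ in $U_x \cap f^{-1}(c)$.

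The main obstacle is justifying the connectedness step cleanly in infinite dimensions: one must be sure that finitely many closed codimension-$\geq 2$ submanifolds of an open Hilbert ball do not disconnect it. The perturbation device works because each such submanifold admits, near any point, a local splitting $\mathbb{H} \cong T_yS \oplus N_yS$ with $\dim N_yS \geq 2$, so a generic perturbation of a $1$-dimensional path avoids $S$. Once this geometric fact is in hand, both (i) and (ii) reduce to the same argument, only applied in different ambient manifolds --- $U_x$ itself for (i), and the level set $U_x \cap f^{-1}(c)$ for (ii). This lemma is precisely what justifies the local path-connectedness assertion that was invoked (without full proof) inside the proof of Lemma \ref{M_0 connected}.
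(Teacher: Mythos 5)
Your proposal follows essentially the same route as the paper's proof: both reduce (i) and (ii) to the fact, supplied by Lemma \ref{complement} and Lemma \ref{transverse} together with the implicit function theorem, that the complement of $M_0$ in $U_x$ (respectively in $U_x \cap f^{-1}(c)$) is a locally finite union of submanifolds of codimension at least two, and then conclude openness, density and connectedness from that. The only difference is one of detail: the paper simply asserts the final step, whereas you spell out the Baire-category density argument and the path-perturbation justification that finitely many closed codimension-$\geq 2$ submanifolds cannot disconnect an open Hilbert ball, which is a welcome addition rather than a divergence.
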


\begin{proof}
	Recall that $M_{0} = W^{s}(p_0)$ where $p_0 \in Crit(f)$ is the unique critical point of index zero.  Then $M_0$ is open and connected by Lemma \ref{W^s submfld}. Let $x \in M$.  Choose a connected neighbourhood $U_x$ of $x$.  \\
	\indent \underline{For property $(i)$};   Let $E = M_{0}^{c}$. Observe that $U_x \cap M_0 = U_{x} \setminus E $.  The $U_{x} \cap M_{0}$ is open because $M_0$ is.  It follows that $U_x \cap M_0$ is open in $U_x$.  Also note that $E$ is a locally finite union of submanifolds of $M$ of codimension $2$ or more, by Lemma \ref{complement}.  Hence, $U_x \cap M_0 \subset U_x$ is connected and dense in $U_x$.  

	\underline{For property $(ii)$}; Let $c \in \mathbb{R}$.

	If $x$ is a \textit{regular point} of $f$ then $U_x \cap M_0 \cap f^{-1}(f(x))$ and $f^{-1}(f(x)) \cap U_x$ are path connected by the implicit function theorem.  It follows that $$(U_x \cap M_0) \cap  f^{-1}\left( f(x) \right) \subset U_x \cap  f^{-1}\left( f(x) \right)$$ is open (in the relative topology).  By Lemma \ref{transverse} we know  that the set $(U_x \cap M_0) \cap  f^{-1}\left( f(x) \right) $ is a smooth submanifold of $U_x \cap f^{-1}\left( f(x) \right)$ with codimension equal to $index_{p_0}(f) \geq 2$.  Then it follows that $$(U_x \cap M_0) \cap  f^{-1}\left( f(x) \right) \subset U_x \cap  f^{-1}\left( f(x) \right)$$ is connected and dense because its complement has codimension at least $2$. \

\end{proof}

Let $f$ be a Morse function on a connected Riemannian manifold $M$.  Let $d$ be the distance function coming from the Riemannian metric $g$ on $M$.  Note that the set $Crit(f)$ has no accumulation points. This follows by the Morse Lemma \ref{Morse lemma} applied to $f$.

For each point in $M$ there exists a neighbourhood $D \subset M$ and a chart with target a Hilbert space.  Let $\phi$ be a chart in $M$ having as target a Hilbert space $( \mathbb{H} , \left\langle \cdot , \cdot \right\rangle )$.  For each $ x \in D$ let $G(x) \colon \mathbb{H} \rightarrow \mathbb{H}$ be an operator defined as in Definition \ref{smooth riemannian str}.  
Then each $G(x)$ is an invertible linear operator that is bounded with bounded inverse. 
Recall that by Lemma \ref{modify stdmetric},  
for each critical point $p \in Crit(f)$, there exists a
neighbourhood $U_p \subset M$ of $p$ on which there is a standard metric (cf. remark \ref{std rephrased})  $g_p$.  For each $x \in U_p (:= D)$ we define the operator $G_p(x) \colon \mathbb{H} \rightarrow \mathbb{H}$ as above.  Using ingredients similar to Palais \cite[Lemma $2$ pg $311$]{RP63},  we can shrink $U_p$ such that there exist constants $a_p := ||G_p||, b_p := ||G^{-1}_p|| > 0$ such that throughout the neighbourhood
$$\frac{1}{b_p}\|v\|_{g_p} \le\|v\|_g\le a_p\|v\|_{g_p}$$
\noindent for all $x \in U_p$, for all $v \in T_xM$.

Since $Crit(f)$ has no accumulation points, for each $p\in Crit(f)$ there exists $R_p>0$
which is less than half the distance (in the distance function $d$)
from $p$ to any other point in $Crit(f)$. 
Thus the balls of radius $R_p$ (in the metric space $(M,d)$)
about $p$ do not intersect.

Since $Crit(f)$ is countable, write $Crit(f) =\{p_1,p_2,p_3,\ldots, p_j,\ldots\}$. 
For each $j=1,\ldots\infty$, let $U_{p_j}$ be the open ball of radius $r_j$
about $p_j$ (in the distance~$d$) where $r_j$ is chosen to be sufficiently
small so that
$$r_j<\min\,\{R_{p_j},\frac{1}{2j}\}$$
and $U_{p_j}$ is contained in the domain of $g_{p_j}$.

Set $U:=\cup_{j=1}^\infty U_{p_j}$,
$\hat{U}:=\cup_{j=1}^\infty \overline{U_{p_j}}$, and $V:=M \smallsetminus \hat{U}$.

\begin{lem}\label{Vopen}
$V$ is open.
\end{lem}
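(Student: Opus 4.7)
The plan is to prove $V$ open by showing $\hat{U}$ is closed; equivalently, I will show that the family $\{\overline{U_{p_j}}\}_{j=1}^\infty$ is locally finite at every point $x \in V$, so that only finitely many of its members are near $x$ and each of those can be avoided individually.

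First I would fix $x \in V$. By definition $x \notin \overline{U_{p_j}}$ for every $j$; in particular $x \notin Crit(f)$. Since $Crit(f)$ has no accumulation points in $M$, the set $Crit(f)$ is closed and discrete, so there exists $\rho > 0$ with $B(x, 2\rho) \cap Crit(f) = \emptyset$, i.e.\ $d(x,p_j) \ge 2\rho$ for \emph{every} $j$.

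Next I would handle the tail. Choose an integer $N$ with $\tfrac{1}{2N} < \rho$. Then for every $j > N$ we have $r_j < \tfrac{1}{2j} < \rho$, and the triangle inequality gives
\[
d(x, \overline{U_{p_j}}) \;\ge\; d(x, p_j) - r_j \;\ge\; 2\rho - \rho \;=\; \rho.
\]
So the infinitely many balls with index $j > N$ all sit at distance at least $\rho$ from $x$ and can be dealt with uniformly.

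Finally I would treat the finite part. For each $j \le N$ the set $\overline{U_{p_j}}$ is closed in $M$ (indeed, it is the closed $d$-ball of radius $r_j$ about $p_j$) and $x \notin \overline{U_{p_j}}$, so $d(x, \overline{U_{p_j}}) > 0$. Setting
\[
\delta \;:=\; \min\bigl\{\rho,\; d(x, \overline{U_{p_1}}),\ldots, d(x, \overline{U_{p_N}})\bigr\} \;>\;0,
\]
the open ball $B(x, \delta/2)$ is disjoint from $\overline{U_{p_j}}$ for every $j$, hence disjoint from $\hat{U}$. This shows $V$ is open.

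The main content — and the only subtle point — is the local finiteness step: a countable union of closed sets need not be closed in general, and here one has to combine \emph{both} of the defining features of the family, namely the decay $r_j < \tfrac{1}{2j}$ of the radii and the fact that $Crit(f)$ has no accumulation points in $M$. Neither condition alone suffices; together they force the estimate $d(x,\overline{U_{p_j}}) \ge \rho$ for all large $j$, which is what makes the argument go through.
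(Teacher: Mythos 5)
Your proof is correct. It rests on exactly the same two ingredients as the paper's proof --- the decay $r_j<\tfrac{1}{2j}$ of the radii and the fact that $Crit(f)$ has no accumulation points --- but you organize them into a direct argument, exhibiting an explicit ball about $x$ disjoint from $\hat{U}$, whereas the paper argues by contradiction: it takes a sequence $(x_m)\subset\hat{U}$ converging to some $x\in V$, notes that each $\overline{U_{p_j}}$ can contain only finitely many terms, and extracts critical points $p_{j_m}$ arbitrarily close to $x$, contradicting non-accumulation. Your version is slightly more informative, since it actually establishes local finiteness of the family $\{\overline{U_{p_j}}\}$ at every point of $V$ and produces a quantitative $\delta$. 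One small caution: in a general metric space the closure of the open ball $U_{p_j}$ need not equal the closed ball of radius $r_j$, so you should drop that parenthetical identification; it is also unnecessary, since all you use is that $\overline{U_{p_j}}$ is closed, is contained in the closed ball of radius $r_j$ about $p_j$, and does not contain $x$.
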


\begin{proof}
Suppose not.
Then there exists a convergent sequence $(x_m)\to x$ such that $x\in V$
and $x_m\in\hat{U}$ for all~$m$.

Each set $\overline{U_{p_j}}$ can contain only finitely many points from the
sequence~$(x_m)$ since otherwise the limit~$x$ would lie
in~$\overline{U_{p_j}}$.

For each $m$, find $j_m$ such that $x_m\in \overline{U_{p_{j_m}}}$.

Given~$n$, since $(x_m)\to x$ there exist infinitely many~$m$ such that
$d(x,x_m)<\frac{1}{2n}$.
In particular, since only finitely many~$x_m$ lie in any~$\overline{U_{p_j}}$,
there exists $m$ such that $d(x,x_m)<\frac{1}{2n}$ and $j_m>n$.

Since $x_m\in \overline{U_{p_{j_m}}}$, we have
$$d(x_m, p_{j_m})<r_{j_m}<\frac{1}{2j_m}<\frac{1}{2n}.$$
Thus
$$d(x,p_{j_m})\le d(x,x_m)+d(x_m,p_{j_m})
<\frac{1}{2n}+\frac{1}{2n}=\frac{1}{n}.$$

However the existence for each $n$ of an element of $Crit(f)$ whose distance to $x$
is less than $1/n$ shows that $x$ is an accumulation point of $Crit(f)$, contrary
to the fact that $Crit(f)$ has no accumulation points.
Therefore there is no such sequence~$(x_m)\to x$ and so $V$ is open.
\end{proof}

Given a connected Riemannian Hilbert manifold $(M, g)$ and a Morse function $f$ on $M$ that satisfies Condition (C) with respect to $g$, the next technical lemma shows us that for each critical point $p$ of $f$, we can modify the metric $g$ in a neighbourhood of $p$ so that Condition (C) continues to hold for $f$ with respect to this new metric on this neighbourhood.  

\begin{lem}
	\label{condition C preserved}
	
	Let $M$ be a connected Hilbert manifold.  Let $f \colon M \rightarrow \mathbb{R}$ be a Morse function and let $g$ be a complete Riemannian metric on $M$ such that $f$ satisfies condition (C). Then there exist neighbourhoods $U_p$ of $p$ for each $p \in Crit(f)$ such that the $\overline{U_p}$ are disjoint and there exists a Riemannian metric $g_{new}$ on $M$ such that:

	 	\begin{list}{}{}
	\item[(i)] $g_{new}$ is standard. 
	\item[(ii)] $g_{new}$ coincides with $g$ outside of $U_p$.

	\item[(iii)] $g_{new}$ is complete, and $f$ satisfies condition (C) with respect to $g_{new}$.
	\end{list}	
\end{lem}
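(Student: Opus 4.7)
The plan is to refine the bump-function modification of $g$ near each critical point so that the resulting metric $g_{new}$ is globally bi-Lipschitz equivalent to $g$ with constants \emph{independent} of the critical point; once this equivalence is established, completeness and Condition~(C) for $g_{new}$ follow formally from the corresponding properties of $g$.

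For each $p_j \in Crit(f)$, I would first choose the Morse chart $\phi_{p_j}\colon B_{p_j}\to\mathbb{H}$ of Lemma~\ref{Morse lemma} with the additional property that $d\phi_{p_j}|_{p_j}\colon T_{p_j}M\to\mathbb{H}$ is a $g$-isometry onto the standard Hilbert space---this is possible because a $g$-orthonormal basis of $T_{p_j}M$ adapted to the Hessian eigenspace decomposition exists, and the proof of the Morse lemma allows the tangential identification at $p_j$ to be any linear isomorphism compatible with that decomposition. Define the local standard metric $g_{p_j}:=\phi_{p_j}^{\ast}\langle\cdot,\cdot\rangle$; then $g_{p_j}|_{p_j}=g|_{p_j}$, so the comparison operator $G_{p_j}(x)$ of Definition~\ref{smooth riemannian str} equals the identity at $p_j$. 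By continuity of $G_{p_j}$ in the operator norm, after shrinking the radius $r_j$ of $U_{p_j}$ already fixed in the setup preceding the lemma (while preserving $r_j<\min\{R_{p_j},1/(2j)\}$ so the $\overline{U_{p_j}}$ remain pairwise disjoint) I may arrange
\[
\tfrac12\,g|_x\;\le\; g_{p_j}|_x\;\le\; 2\,g|_x\qquad (x\in U_{p_j}),
\]
with the \emph{same} constants $\tfrac12,2$ for every $j$. Next, for each $j$ choose a smaller ball $V_{p_j}\subset U_{p_j}$ about $p_j$ and a smooth bump function $\lambda_{p_j}\colon M\to[0,1]$ with $\mathrm{supp}\,\lambda_{p_j}\subset U_{p_j}$ and $\lambda_{p_j}\equiv 1$ on $V_{p_j}$, and set
\[
g_{new}|_x\;:=\;\begin{cases}(1-\lambda_{p_j}(x))\,g|_x+\lambda_{p_j}(x)\,g_{p_j}|_x, & x\in U_{p_j},\\ g|_x, & x\notin\bigcup_j U_{p_j}.\end{cases}
\]
Because the $\overline{U_{p_j}}$ are disjoint this is unambiguous and smooth; on $V_{p_j}$ it equals $g_{p_j}$, so $\phi_{p_j}|_{V_{p_j}}$ is a Morse chart that is also an isometry for $g_{new}$, giving~(i) via Remark~\ref{std rephrased}, while (ii) is manifest from the construction. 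The convex combination inherits the pointwise bound above, yielding the global inequality $\tfrac12\,g\le g_{new}\le 2\,g$ on $M$.

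For~(iii) the uniform inequality $\tfrac12\,g\le g_{new}\le 2\,g$ gives $\tfrac{1}{\sqrt2}\,d_g\le d_{g_{new}}\le \sqrt2\,d_g$ for the induced distance functions on $M$, so a sequence is $g$-Cauchy iff it is $g_{new}$-Cauchy and completeness of $(M,g)$ forces completeness of $(M,g_{new})$. Dualising the metric inequality yields $\tfrac12\,g^{-1}\le g_{new}^{-1}\le 2\,g^{-1}$, hence $\tfrac{1}{\sqrt2}\|df|_x\|_g\le\|df|_x\|_{g_{new}}\le \sqrt2\,\|df|_x\|_g$ for every $x\in M$. Consequently any sequence $(x_n)$ with $|f(x_n)|$ bounded and $\|df|_{x_n}\|_{g_{new}}\to 0$ also satisfies $\|df|_{x_n}\|_g\to 0$, and applying Condition~(C) for $(g,f)$ produces a convergent subsequence, verifying Condition~(C) for $(g_{new},f)$. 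The main obstacle is the uniform-in-$j$ control required in the displayed inequality above: without it, the distortion between $g$ and $g_{new}$ could be unbounded in the union $\bigcup_j U_{p_j}$ and both completeness and Condition~(C) could fail. The decisive idea is that arranging $g_{p_j}$ to agree with $g$ at $p_j$ reduces the problem to continuity of $G_{p_j}$ at a single point, after which shrinking each $U_{p_j}$ is a purely local operation giving complete freedom to make the distortion uniformly small.
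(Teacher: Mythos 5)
Your overall strategy is genuinely different from the paper's: you aim for a uniform two\nobreakdash-sided comparison $\tfrac12\,g\le g_{new}\le 2\,g$ with constants independent of the critical point, from which completeness and Condition (C) follow formally (and with the dual-norm inequality pointing the right way), whereas the paper keeps $p$-dependent constants $a_p,b_p$ and builds only the one-sided bound $\|\cdot\|_{g_{new}}\ge\|\cdot\|_g$ into the definition by weighting the standard metric with $a_p$. The difficulty is the step on which your uniformity rests. A Morse chart is not free at the linear level: differentiating $f\circ\phi_{p_j}^{-1}(v)=\|Pv\|^2-\|(I-P)v\|^2$ twice at $0$ shows that $d\phi_{p_j}|_{p_j}$ must carry the form $\tfrac12 H_{p_j}(f)$ to $\langle (2P-I)\cdot,\cdot\rangle_{\mathbb{H}}$, so the only remaining freedom is post-composition by linear maps preserving that model form. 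If $d\phi_{p_j}|_{p_j}$ were in addition a $g$-isometry, the operator $A_{p_j}$ representing $H_{p_j}(f)$ with respect to $g$ would be conjugate to $2(2P-I)$ and hence satisfy $A_{p_j}^2=4\,\mathrm{Id}$. Strong nondegeneracy only gives that $A_{p_j}$ is invertible with bounded inverse, not that its spectrum is $\{\pm 2\}$, so the normalization $g_{p_j}|_{p_j}=g|_{p_j}$ is impossible in general; it is not a matter of choosing a $g$-orthonormal basis adapted to the eigenspace decomposition.

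Without that normalization, $G_{p_j}(p_j)$ is essentially $2|A_{p_j}|^{-1}$, and the best comparison constants obtainable on $U_{p_j}$ are governed by the spectrum of the Hessian at $p_j$. Since $Crit(f)$ may be infinite, nothing bounds these spectra uniformly in $j$, and shrinking $U_{p_j}$ only controls the variation of $G_{p_j}$ near $p_j$, not its value at $p_j$. Hence the displayed inequality with the universal constants $\tfrac12$, $2$ can fail, and with it the global bi-Lipschitz equivalence on which your proofs of completeness and of Condition (C) rest. The rest of your argument (convex combinations inherit two-sided bounds; distances and dual norms compare) is fine and would work if the uniform bound held; the repair is the paper's device of retaining the $p$-dependent constants and defining $g_{new}=(1-\kappa_p)\,g+a_p\kappa_p\,g_p$ on $U_p$, which yields the single uniform one-sided estimate $\|\cdot\|_{g_{new}}\ge\|\cdot\|_g$ needed downstream.
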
	
	
\begin{proof}
Choose neighbourhoods $U_p$ so that Lemma \ref{Vopen} applies.  By Lemma \ref{modify stdmetric}  the existence of standard metrics (\ref{std rephrased})  $g_p$ on neighbourhoods of $p$ is guaranteed.  As in Palais \cite[Lemma $2$ pg $311$]{RP63}, we use similar ingredients to show that, 
for each $p \in Crit(f)$ we can shrink $U_p$ such that there exist constants $a_p$, $b_p >0$ such that $$\frac{1}{b_p}||v||_{g_p} \leq || v ||_g \leq a_p ||v||_{g_p}$$ \noindent for all $x \in U_p$, for all $v \in T_xM$.

For each $p$, choose a bump function $\kappa_p:M \to \mathbb{R}$ for the
neighbourhood~$U_p$.  That is, let $\kappa_p$ be a smooth function with: \\
	$\bullet \hspace{2mm} 0 \leq \kappa_p(x) \leq 1$, and \\
	$\bullet \hspace{2mm} \kappa_p(x) = 1$ near $p$, and \\
	$\bullet \hspace{2mm} \textrm{ supp} \left( \kappa_p(x) \right) \subseteq U_p$.\

For $x\in M$, define a new metric by

 	 \[ g_{new}|_x = \left\{ \begin{array}{ll}
	\displaystyle \left( 1 - \kappa_p(x) \right) g|_x + a_p \kappa_p(x) g_p|_x & \textrm{ if  $x \in U_p$} \\
	g|_x & \textrm{ if $x \not\in \cup_{p \in Crit(f)} U_p$ .} 
\end{array} \right. \]

Then $g_{new}$ satisfies (i)--(iii) by construction.  (Note that $g_{new}$ is a Riemannian metric because $V$ as defined in Lemma \ref{Vopen} is open).

\begin{claim}
	\label{compareg}
 $|| \cdot ||_{g_{new}} \geq || \cdot ||_g$. 
\end{claim}

\begin{proof}
If $x\in U_p$ then

\begin{eqnarray*}
||\cdot||^2_{g_{new}} \left|_x \right. & = & \left( 1 - \kappa_p(x) \right) ||\cdot||_g^2 \left|_x \right.+ \underbrace{a_p^2 \kappa_p(x) ||\cdot||^2_{g_p} |_x}_{ \geq  \kappa_p(x) || \cdot ||_g^2 |_x } \\
									& \geq &  \left( 1 - \kappa_p(x) \right) ||\cdot||_g^2 \left|_x \right. +  \kappa_p(x) || \cdot ||_g^2 |_x \\
									& = & \left( 1 - \kappa_p(x) + \kappa_p(x) \right) ||\cdot||_g^2 |_x \\
									& = & ||\cdot||_g^2 |_x . \\
									\end{eqnarray*}

and if $x$ lies outside $U_p$ for every~$p$ then $g_{new}|_x=g_x$.  So $|| \cdot ||_{g_{new}} || |_x  \geq  || \cdot ||_g |_x$.
\end{proof}

\bigskip
{\bf To show $g_{new}$ is complete}

Let $(x_m)$ be a Cauchy sequence in the distance function coming from~$g_{new}$.

By Claim \ref{compareg} the sequence~$(x_m)$ is also a Cauchy sequence in the
distance function coming from $g$.
Since $(M,d)$ is a complete metric space, there exists $y\in M$ such that
$(x_m)\to y$ in the distance function $d$.  Recall that convergence with respect to one of these metrics implies convergence with respect to the other  because the topology induced by these two metrics is the same (see \ref{same topology}).

\smallskip

{\bf To show $f$ satisfies condition $(C)$ with respect to $g_{new}$}

Let $\{x_n\}\subset M$ be a sequence for which $|f(x_n)|$ is bounded.
Suppose that
$$\|df|_{x_n}\|^2_{g_{new}}:=\langle df|_{x_n},df|_{x_n}\rangle_{g_{new}}\to 0.$$
We wish to show that $(x_n)$ has a subsequence which converges to a critical
point.

By Lemma~\ref{compareg}, 
$$\|df|_{x_n}\|^2_{g_{new}}\ge\|df|_{x_n}\|^2_g$$
and so
$$\|df|_{x_n}\|^2_g\to 0.$$
The fact that $f$ satisfies condition (C) with respect to $g$ gives a
subsequence $(x_{n_k})$ of $(x_n)$ which converges 
to a critical point $y$.  Say $(x_{n_k})\to y$. 
Again recall the fact that convergence with respect to one of these metrics implies convergence with respect to the other because the topology induced by these two metrics is the same  (see \ref{same topology}). 
So $(x_n)$ has a convergent subsequence, as desired.

{\bf End of Proof of Lemma \ref{condition C preserved}}

 \end{proof}

	We are now prepared to prove the connectivity for each level set of $f$. 

\begin{thm}[Connected Levels]
		\label{connectedlevels}
		
		Let $M$ be a connected Hilbert manifold and let $f \colon M \rightarrow \mathbb{R}$ be a Morse function that is bounded from below and none of whose critical points have index or coindex equal to $1$.  Suppose that there exists a complete Riemannian metric on $M$ such that $f$ satisfies condition $(C)$.  Then the level set $f^{-1}(c) \subset M$ is connected for every $c$ in $\mathbb{R}$.  
		
\end{thm}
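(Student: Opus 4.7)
My plan is to combine the preceding infrastructure (standard metrics, stable manifolds, density of $M_0$) with a gradient-flow argument that reduces the connectedness of $f^{-1}(c)$ to the connectedness of a sphere in a Hilbert space. First I apply Lemma \ref{condition C preserved} to replace the given complete metric by one that is still complete, under which $f$ still satisfies Condition (C), and which is in addition standard near each critical point of $f$. With this metric the preceding lemmas apply: in particular Lemma \ref{M_0 connected} gives that $M_0 = W^s(p_0)$ is open, connected, and dense in $M$, where $p_0$ is the unique critical point of index zero and attains the global minimum $c_0 = f(p_0)$.

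I then reduce the problem to showing that $M_0 \cap f^{-1}(c)$ is both connected and dense in $f^{-1}(c)$; the combination forces $f^{-1}(c)$ to be connected, because any space containing a dense connected subset is itself connected. Density at regular points of $f^{-1}(c)$ is immediate from Lemma \ref{standardproperties}(ii). At a critical point $q \in f^{-1}(c)$, the hypothesis that $q$ has index and coindex at least two, together with the standard Morse chart, identifies a neighbourhood of $q$ in $f^{-1}(c)$ with the cone $\{\|x_+\|^2 = \|x_-\|^2\} \subset \mathbb{H}_+ \oplus \mathbb{H}_-$; nonzero points of this cone are regular points of $f$, they accumulate at $q$, and each of them has nearby points in $M_0 \cap f^{-1}(c)$ by Lemma \ref{standardproperties}(ii), so $q$ lies in the closure of $M_0 \cap f^{-1}(c)$.

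For the connectedness of $M_0 \cap f^{-1}(c)$, the cases $c < c_0$ (empty) and $c = c_0$ (the point $p_0$) are trivial. For $c > c_0$ I use the negative gradient flow $\rho_t$. Since $p_0$ is the only critical point of $f$ in $M_0$, every forward orbit in $M_0$ stays in $M_0$ and converges to $p_0$, eventually entering the standard Morse ball $B_{p_0}$, where an isometric chart identifies $f$ with $c_0 + \|v\|^2$; the level sets of $f$ inside this ball are Hilbert spheres in $T_{p_0} M$ and hence connected. Given $x, y \in M_0 \cap f^{-1}(c)$, the idea is to flow both forward onto a common small level sphere $S_\epsilon \subset B_{p_0}$, join their flow images by a path $\gamma$ on that sphere, and then push $\gamma$ back to $f^{-1}(c)$ by the reversed gradient flow to obtain a continuous path in $M_0 \cap f^{-1}(c)$.

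The main obstacle is the push-back step: the backward gradient flow from an arbitrary point of $S_\epsilon$ need not remain in $M_0$, since $M_0$ may only be an open subset of $M$ whose complement contains the other critical points. I plan to address this by restricting $\gamma$ to the open subset of $S_\epsilon$ whose backward flow reaches $f^{-1}(c)$ while staying inside $M_0$, and exploiting the fact that the complement of $M_0$ is a locally finite union of submanifolds of codimension at least two (Lemma \ref{complement}) to argue that this restricted subset is still path-connected and contains the forward-flow images of both $x$ and $y$, thereby yielding the required path in $M_0 \cap f^{-1}(c)$.
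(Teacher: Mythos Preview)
Your approach is essentially the paper's: apply Lemma~\ref{condition C preserved} to obtain a complete standard metric, reduce to connecting points of $M_0\cap f^{-1}(c)$ (the paper calls these ``totally descending points''), flow both points down with the (normalized) negative gradient into a standard ball around the unique minimum~$p_0$, connect them on a small level sphere there, and push the connecting path back up to level~$c$. The paper treats critical points in $f^{-1}(c)$ exactly as you do, via the cone $\{\|x_+\|=\|x_-\|\}$ in a Morse chart.

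There is one genuine confusion in your final paragraph, and it is precisely where the \emph{coindex} hypothesis enters. You say the backward flow from $S_\epsilon$ ``need not remain in $M_0$'' and cite Lemma~\ref{complement}. But $M_0=W^s(p_0)$ is invariant under the \emph{full} flow, forward and backward: if $\rho_t(x)\to p_0$ as $t\to\infty$ then the same holds for $\rho_{-s}(x)$. So backward flow cannot leave $M_0$. The real obstruction to the backward flow from $S_\epsilon$ reaching level $c$ is that the trajectory may converge in backward time to some critical point $p_i$ with $c''\le f(p_i)\le c$; this happens exactly on the \emph{unstable} manifolds $W^u(p_i)$, whose codimension in $M$ (and hence in $f^{-1}(c'')$, by Lemma~\ref{transverse}) is $\operatorname{coindex}_{p_i}(f)\ge 2$. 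Lemma~\ref{complement} concerns the stable manifolds and the index hypothesis; it is the wrong tool here. The paper makes this explicit: it chooses the path $\gamma^\star$ on the low level $f^{-1}(c'')$ inside $U_0\subset M_0$ and \emph{transverse} to each $W^u(p_i)$ (there are only finitely many such $p_i$ below level $c$, by Condition~(C)); since a $1$-dimensional path transverse to a codimension-$\ge 2$ submanifold misses it, the normalized backward flow $\psi_{-(c-c'')}$ carries all of $\gamma^\star$ up to $f^{-1}(c)$, and the image stays in $M_0$ automatically by flow-invariance. Once you replace your appeal to Lemma~\ref{complement} with this unstable-manifold avoidance (Lemma~\ref{W^s submfld} for $W^u$), your argument and the paper's coincide.
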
	

\begin{proof}
	By the definition of a Morse function, each of the critical points of $f$ is (strongly) nondegenerate.  By the Morse Lemma for Hilbert manifolds \cite{RP63}, each critical point of $f$ on $M$ is isolated. 
	
	By Lemma \ref{condition C preserved} there exists a complete Riemannian metric, call it $g$, on $M$ for which $f$ satisfies Condition (C) and such that $- \nabla_g f$ is standard near each critical point.  Consider the vector field $- \nabla_g f$.

	Recall that $f$ has only one critical point of index zero, say $p_0$.  Moreover, $f$ attains its global minimum value $c_0 := f(p_0)$ on $M$ (see remark \ref{global min value}).  Also recall that Palais (see \cite{RP63} Proposition$1$ pg $314$) proves that if $a$, $b \in \mathbb{R}$ then there is at most a finite number of critical points $p$ of $f$ satisfying $a < f(p) < b$.  Hence, the critical values of $f$ are isolated and there are at most a finite number of critical points of $f$ below any critical level since $f$ is bounded from below by assumption.  Let $c_0 < c_1 < c_2 < \cdots$ be the critical values of $f$.

	\indent Let $c \in Im(f)$ 
 such that $c > c_0$.  \\
 \noindent \textbf{Case I}: \textit{for any regular point of $f$ in $f^{-1}(c)$, $f^{-1}(c)$ is connected } \\ Let $E  = M_{0}^{c}$. 
 Note that by Lemma \ref{standardproperties} (ii), any regular point in $f^{-1}(c)$ can be connected by a continuous path in $f^{-1}(c)$ to a point that belongs to $M_0$ (a `totally descending point') of $f^{-1}(c)$.  Thus, following the method of Bryant \cite{RB03}, to prove the connectedness of $f^{-1}(c)$ it suffices to show that any two totally descending points of $f^{-1}(c)$ can be joined by a continuous path in $f^{-1}(c)$.  Let us give more details. 
	
	Suppose that $x$, $y \in f^{-1}(c)$ are regular points of $f$ such that $x \neq y$.  Then there exist neighbourhoods $U_x$ and $U_y$ of $x$ and $y$, respectively, that satisfy the properties of Lemma \ref{standardproperties}(ii).  So we can choose totally descending points, say $x'$ and $y'$, in $f^{-1}(c)$, which connect to $x$ and $y$ in $f^{-1}(c)$.  Moreover, by Lemma \ref{standardproperties}(i) and the Morse Lemma \ref{Morse lemma}, there exists a (`controlled') neighbourhood $U_0$ of $p_0 \in M$ such that for all $c$, the set $U_0 \cap f^{-1}(c)$ is connected and such that $U_0 \subset M_0$.
	
	We pass to the \textit{normalized} gradient flow.  Note that by Palais \cite{RP63} there exists a time $t \in \mathbb{R}$ 
	such that the \textit{normalized} forward (downward) flow lines of $x'$ and $y'$ belong to $U_0$.

	\vspace{5mm}
 \textit{The fact that the gradient flow lines are normalized means that their speed of descent is one, and therefore level sets map to level sets.}  We make explicit use of this fact throughout this proof.\
 \newline

	More precisely, let $\psi_t$ denote the downward normalized flow.  There are points $x'' := \psi_{t}(x')$ and $y'' := \psi_{t}(y')$ (i.e., $x''$ lies on the forward normalized flow line of $- \nabla_gf$ through $x'$, similarly $y''$ lies on the forward normalized downward flow line of $- \nabla_gf$ through $y'$) such that $f( x'') = f(y'') := c''$ and $x''$, $y'' \in U_0 \cap f^{-1}(c'')$.  Since $U_0 \cap f^{-1}(c'')$ is path connected, $x''$ and $y''$ may be connected by a continuous path in $U_0 \cap f^{-1}(c'')$.  

	Recall that there are a finite number of critical points below any level.  In particular, there are a finite number of critical points between $c$ and $c''$.  Call these points $p_1, \ldots, p_k$. Moreover, recall that for each $i$ ( $1 \leq i \leq k$) $W^u(p_i) \subset M$ is a submanifold with codimension equal to $index_{p_i}(f)$ by Lemma \ref{W^s submfld}.  In particular, by Lemma \ref{transverse} each $W^u(p_i)$ intersects the smooth part of $f^{-1}(c'')$ transversally in submanifolds of codimension at least $2$ because the coindex of $f$ cannot equal one for any critical point.  Consequently, we may choose a path $\gamma^\star \colon [0,1] \rightarrow U_0 \cap f^{-1}(c'')$ with $\gamma^\star (0)= x''$ and $\gamma^\star (1) = y''$ such that it is transverse to each of the unstable manifolds $W^u(p_i)$, $1 \leq i \leq k$.
	
	\begin{figure}[htpb]
	\begin{center}
	\resizebox{13.5cm}{12cm}{\includegraphics{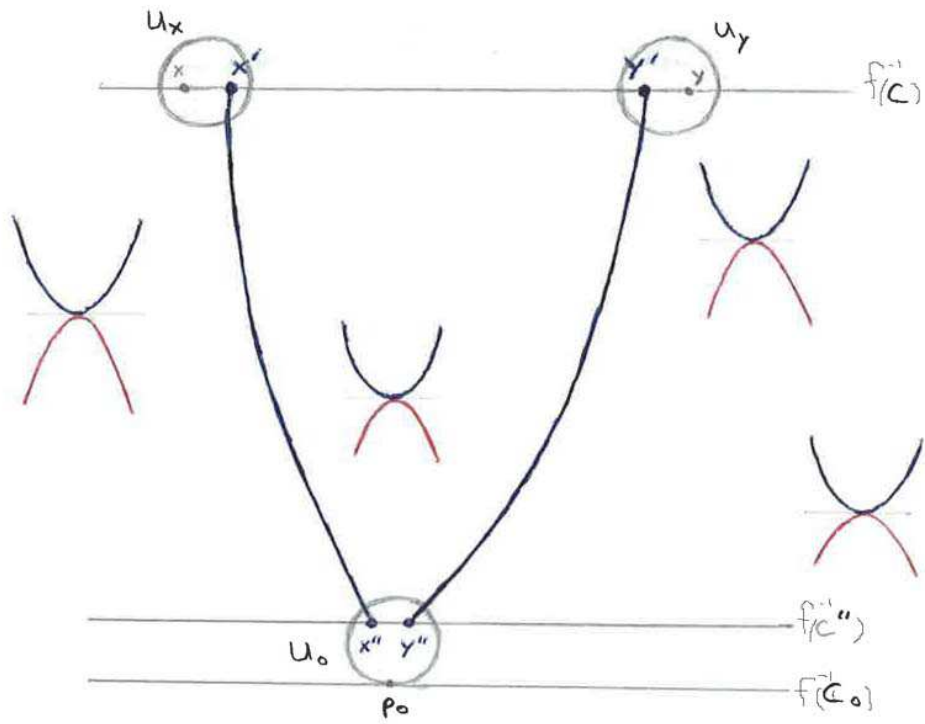}}
	\end{center}
	\caption{ $f^{-1}(c)$ connected for any $c \in \mathbb{R}$}
	\label{fig:connectedlevels}
	\end{figure}
	
	We can now use the gradient flow to move this path $\gamma^\star$ back up to the level of $f^{-1}(c)$;  Recall that $\psi_t$ denotes the downward normalized flow. Let $(t,m) \mapsto \psi_t(m)$ be defined on an open subset $U \subseteq  \mathbb{R} \times M$.  Define an open subset $U_t := \{ m \in M \hspace{2mm} | \hspace{2mm} (t, m) \in U \}  \subseteq M$.  So by Lang (\cite{SL01} Chapter IV Theorem 2.9), for all $t$ the map $\psi_t \colon U_t \rightarrow U_{-t}$ is a diffeomorphism with inverse $\psi_{-t}$.  Note that $\psi_t$ restricts to a diffeomorphism $U_t \cap f^{-1}(c) \rightarrow U_{-t} \cap f^{-1}(c'')$ with inverse the restriction of $\psi_{-t}$ when $t = c - c''$.	 It then follows that for $t = c - c''$, 
	the set $\psi_{-t} \left( \gamma^\star ( \cdot ) \right) $ is an open and dense subset of $f^{-1}(c)$; this can be arranged because from Palais it follows that $$f^{-1}(c'') \smallsetminus \left( U_{-t} \cap f^{-1}(c'') \right) = f^{-1}(c'') \cap \bigcup_{{\tiny \begin{array}{c} p \in Crit(f) \hspace{1mm} such \\ that \hspace{.5mm} c' \leq f(p) \leq c \end{array}}} W^u(p).$$ \noindent where $c''$ is a regular value of $f$.  Now $x'$ and $y'$ can be joined by a path in $f^{-1}(c)$, as desired. 
	
	This proves that $x'$ and $y'$ can be joined by a path in $f^{-1}(c)$.  Therefore, we may conclude that $f^{-1}(c)$ is connected for every 
	$c \in \mathbb{R}$ in this case.
	
\noindent \textbf{Case II}: \textit{for any critical point of $f$ in $f^{-1}(c)$, $f^{-1}(c)$ is connected } \\ 
  \indent We first prove that $f^{-1}(c_0)$ is connected.  Recall that $c_0$ is the global minimum value of $f$ (See remark 4.0.35, $f(p_0)=c_0$).  We know that $index_{p_0}(f) = 0$.  By Lemma \ref{W^s submfld} the stable manifold of $p_0$, $W^s(p_0) \subset M$, is connected.  
  Hence, $f^{-1}(c_0) \subseteq W^s(p_0)$ must also be connected.  To see this suppose that $f^{-1}(c_0)$ is not connected, i.e. $f^{-1}(c_0) = U \sqcup V$ such that $U$, $V \neq \emptyset$ and $U \neq V$.  Then $W^s(p_0) = W^s(U) \sqcup W^s(V)$ with both $W^s(U)$, $W^s(V)$ nonempty and not equal to each other.  But this means that $W^s(p_0)$ is not connected, a contradiction.  Therefore, $f^{-1}(c_0)$ is connected.
  
  Next, note that for every singluar point in $M$ there exists a regular point of $f$ in the same level set such that we can connect them through a path that lies entirely within the level set.  Then we may connect any two regular points in this level of $f$ as in Case $I$, so as to obtain that $f^{-1}(c)$ is connected.  Thus is it sufficient to show that a critical point can be connected to a regular point within the level.  Let us provide more details.
  
Suppose that $x \in f^{-1}(c)$ is a singular point of $f$.  By the Morse Lemma \ref{Morse lemma}, there exists a neighbourhood $U_x$ of $x$ and a chart $\phi$ such that $\phi (x) = 0$, $f^{\mathbb{H}} (x_+, x_-) = ||x_+||^2 - ||x_-||^2$ on $\phi(U_x)$ and that $\mathbb{H} = \mathbb{H}_+ \oplus \mathbb{H}_-$.  Fix such a Morse chart $\phi \colon U_x \rightarrow B_0 \subset \mathbb{H}$ (where $B_0$ is a neighbourhood of $0$) of $x$ with the properties:    

	\begin{list}{}{}
	\item $\bullet \hspace{2mm} \phi$ is an isometry,
	\item $\bullet \hspace{2mm} \phi(U_x) = B_+ \times B_-$, where $B_{\pm} \subset \mathbb{H}_{\pm}$ are unit balls in $\mathbb{H}_{\pm}$ respectively. 
	\end{list}

So $ \left( B_+ \times B_- \right) \cap (f^{\mathbb{H}})^{-1}(0)   =  \{ (x_+, x_- ) \in \mathbb{H} \hspace{2mm} | \hspace{2mm}  ||x_+||^2 = ||x_-||^2 \}. $ Observe that this is homeomorphic to a cone on $S_+(1) \times S_-(1)$, where $S_{\pm}(1)$ are unit spheres in $\mathbb{H}_{\pm}$ respectively.  Note that the set $\left( B_+ \times B_- \right) \cap (f^{\mathbb{H}})^{-1}(0)$ collapses at the origin to give a cone over  $S_+(1) \times S_-(1)$).

		\begin{figure}[htpb]
	\begin{center}
	\includegraphics{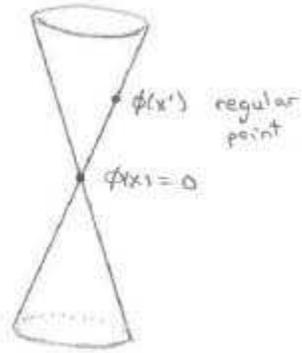}
	\end{center}
	\caption{Cone over $S_+(1) \times S_-(1)$.}
	\label{fig:cone over}
	\end{figure}
					
Recall that the critical points of $f$ are isolated.  If we start at the origin $(0,0)$ then $\{ (tx_+, tx_-) \hspace{1mm} | \hspace{1mm}  0 \leq t \leq 1 \}$ is the path connecting $(0,0)$ to a regular point, say $\phi(x')$, in $\left( B_+ \times B_- \right) \cap (f^{\mathbb{H}})^{-1}(0) $.  This implies that we can connect $x$ to a regular point of $f$, say $x'$, in $f^{-1}(f(x))$ in $M_0 \cap f^{-1}(f(x))$, as desired.

	This proves that $f^{-1}(c)$ is connected in this case.

	Taken as a whole, we see that the proof is complete.

	\end{proof}

\chapter{Convexity and Connectedness}
	 In this chapter we will state and prove the main results of this thesis.  
	
\section{Almost Periodic $\mathbb{R}^n$ Actions and Complex Structures}

\begin{note}
	\label{almost periodic R}
	An $\mathbb{R}$-action on a manifold $M$ is said to be \textbf{almost periodic} if there exists a torus action $(S^1)^N \circlearrowright M$ and a one-parameter subgroup $\mathbb{R} \rightarrow (S^1)^N$ such that the $\mathbb{R}$-action is the composition $\left( \mathbb{R}, + \right) \rightarrow (S^1)^N \circlearrowright M$.
\end{note}

\begin{note}
	\label{almost periodic}
	An $\mathbb{R}^n$-action on a manifold $M$ is said to be \textbf{almost periodic} if there exists a torus action $(S^1)^N \circlearrowright M$ and a homomorphism $(\mathbb{R}^n, +) \rightarrow (S^1)^N$ such that the $\mathbb{R}^n$-action is the composition $\mathbb{R}^n \rightarrow (S^1)^N \circlearrowright M$.
\end{note}

\begin{rem} 
	\label{T setup}
Let $T$ be the closure of the image of the homomorphism $(\mathbb{R}^n, +) \rightarrow (S^1)^N$.  
\end{rem}

\begin{note}
	\label{T generated}
	In the notation of Remark \ref{T setup}, we define the \textbf{generated torus action} on $M$ to be $T$ with its action on $M$.
\end{note}

	From now let $M$ be a connected strongly symplectic Hilbert manifold.  Suppose that we have an almost periodic $\mathbb{R}^n$ action on $M$ with momentum map $\mu \colon M \rightarrow \mathbb{R}^n$.  Suppose that the $\mathbb{R}^n$ action has isolated fixed points.  Fix a $\xi \in \mathbb{R}^n$ such that the momentum map component $\mu^{\xi} := \langle \mu ( \cdot ), \xi \rangle \colon M \rightarrow \mathbb{R}$ has only nondegenerate critical points (i.e., $\mu^{\xi}$ is a Morse function).  Recall that
\begin{enumerate}
\item[(i)] the critical point set of every Morse component is fixed by $T$, the generated torus action on $M$; and conversely
\item[(ii)] if the set of critical points of a component of $\mu$ is fixed by $T$ then that component is Morse.
\end{enumerate}

	Let $x \in M^T$, the fixed point set of the generated torus action.  By continuity, $M^T = M^{\mathbb{R}^n}$, the fixed point set of the almost periodic $\mathbb{R}^n$ action on $M$.  Note that $M^{\mathbb{R}^n}$ only depends on $\mu$.  In what follows, we will show that there exists a $T$-invariant compatible complex structure on the symplectic vector space $(T_xM, \omega )$.  We will also establish that no critical points of $\mu^{\xi} \colon M \rightarrow \mathbb{R}$ have index or coindex equal to one.

\begin{lem}
\label{complex structure}

	Let $( M, \omega )$ be a connected strongly symplectic Hilbert manifold.  Suppose that we have an almost periodic $\mathbb{R}^n$ action on $M$ with momentum map $\mu : M \rightarrow \mathbb{R}^n$.  Suppose that this $\mathbb{R}^n$ action has isolated fixed points.  Let $T$ be the torus generated by the almost periodic $\mathbb{R}^n$-action and let $p \in M^T$. There exists an $\omega$-compatible and $T$-invariant complex structure $J$ on $T_pM$.
\end{lem}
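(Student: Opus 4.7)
The plan is to construct $J$ by a polar-decomposition argument, starting from a $T$-invariant strong inner product on $T_pM$ and then reading $\omega_p$ off against it. Here is how I would proceed.

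First I would note that because $p$ is fixed by $T$, the torus acts linearly on $T_pM$ through the differentials $d\lambda_t|_p\colon T_pM\to T_pM$, and each $d\lambda_t|_p$ preserves $\omega_p$ since $\omega$ is $T$-invariant. The representation $t\mapsto d\lambda_t|_p$ is continuous into the bounded operators on $T_pM$ (with some reference Hilbert structure coming from a chart), and since $T$ is compact, the operator norms $\|d\lambda_t|_p\|$ are uniformly bounded. Averaging the given Hilbert inner product against Haar measure on $T$ therefore produces a new bilinear form $g$ on $T_pM$ which is symmetric, positive-definite, $T$-invariant, and comparable to the original inner product; in particular $g$ is a strong inner product making $T_pM$ into a Hilbert space with an equivalent topology.

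Next I would use the strong nondegeneracy of $\omega_p$ together with $g$ to define a bounded operator $A\colon T_pM\to T_pM$ by the identity $\omega_p(u,v)=g(Au,v)$. Since $\omega_p$ is antisymmetric, $A$ is $g$-skew-adjoint, i.e.\ $A^*=-A$; since $\omega_p$ is strongly nondegenerate, the induced map $T_pM\to T_pM^*$ is a bounded isomorphism with bounded inverse, and combined with the Riesz isomorphism provided by $g$ this makes $A$ a bounded invertible operator with bounded inverse. Moreover $A$ is $T$-equivariant because both $g$ and $\omega_p$ are $T$-invariant.

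Now I would apply polar decomposition in the Hilbert space $(T_pM,g)$: set $|A|=(-A^{2})^{1/2}$, which is well-defined by the Borel functional calculus since $-A^2=A^*A$ is positive self-adjoint and invertible, and define
\[
J := A\,|A|^{-1}.
\]
Because $A$ is skew-adjoint and commutes with $|A|$, one checks that $J^*=-J$ and $J^*J=I$, so $J^2=-I$ and $J$ is $g$-orthogonal. Compatibility with $\omega_p$ follows from
\[
\omega_p(u,Ju)=g(Au,Ju)=g(J|A|u,Ju)=g(|A|u,u)>0 \quad (u\neq 0),
\]
and antisymmetry of $\omega_p(J\cdot,J\cdot)$ with $\omega_p(\cdot,\cdot)$ is automatic. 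Finally, $J$ is $T$-invariant because $A$ is $T$-equivariant, $|A|=(-A^2)^{1/2}$ inherits this equivariance via the functional calculus, and hence so does $J=A|A|^{-1}$.

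The main obstacle I anticipate is the infinite-dimensional bookkeeping in step one and step three: checking that Haar-averaging really yields a \emph{strong} inner product (so that one stays within the category of strong symplectic / Hilbert structures) and that $A$ has a bounded inverse so that $|A|^{-1}$ exists as a bounded operator. Both reduce to the uniform boundedness of $\{d\lambda_t|_p\}_{t\in T}$ and the hypothesis that $\omega_p$ is strongly (not just weakly) nondegenerate; once these are in place, the polar-decomposition construction proceeds exactly as in the finite-dimensional case.
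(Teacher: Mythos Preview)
Your proposal is correct and follows essentially the same polar-decomposition argument as the paper: average to get a $T$-invariant strong inner product, write $\omega_p(u,v)=g(Au,v)$ with $A$ skew-adjoint and invertible, and set $J=A|A|^{-1}=(AA^*)^{-1/2}A$. The paper carries out the verifications of $J^2=-I$, orthogonality, $\omega$-compatibility, and $T$-invariance in slightly more detail, but the construction and the key observation that $A$ commutes with $(AA^*)^{-1/2}$ are identical.
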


We establish Lemma \ref{complex structure} in a manner similar to Weinstein \cite[Lecture $2$, pg $8$]{AL79}.

\begin{proof}

	By averaging over the torus $T$, we may choose a positive $T$-invariant inner product $\langle \cdot , \cdot \rangle$ on $T_pM$.  Observe that $T_pM$ is a strongly symplectic (real) vector space since it is equipped with a strongly symplectic nondegenerate $2$-form $\omega$.  Since $\omega$ and $ \langle \cdot ,  \cdot \rangle$ are nondegenerate, 
	
	 \[ 
\begin{array}{ll}
	u \in T_pM & \mapsto \omega ( u, \cdot ) \in T_p^*M \\
	v \in T_pM & \mapsto  \langle v , \cdot \rangle \in T_p^*M 
\end{array}
			\Bigg\rbrace  \] 
			
\vspace{3mm}			
			
			\noindent are isomorphisms between $T_pM$ and $T_p^*M$.  Hence, $\omega$ can be represented by some linear (skew-adjoint) operator $A \colon T_pM \rightarrow T_pM$, i.e., $\omega( u , v ) = \langle Au , v \rangle$ for $u, v \in T_pM$.  Note that $A$ is skew-adjoint (with respect to $\langle \cdot , \cdot \rangle$ ) because
	\begin{eqnarray*}
	\langle A^Tu,v \rangle & = & \langle u, Av \rangle \textrm{ ,by definition of $A^T$}\\
		& = & \langle Av , u \rangle \textrm{ , since $\langle \cdot , \cdot \rangle$ is symmetric}\\
		& = & \omega(v,u) \textrm{ , by definition of $A$} \\
		& = & - \omega(u,v) \textrm{ , since $\omega$ is skew-symmetric} \\
		& = & - \langle Au , v \rangle \textrm{ , by definition of $A$.}
	\end{eqnarray*}
	
\noindent We wish to find a $T$-invariant and $\omega$-compatible complex structure $J$ on $T_pM$.  We claim that: $J = \sqrt{(AA^T)}^{- 1}A$ has these properties.

Note that $(AA^T)^{-1}$ is an operator on $T_pM$ that is positive definite and symmetric with respect to $\langle \cdot , \cdot \rangle$.  By the Spectral Theorem we can obtain an operator $\sqrt{(AA^T)^{-1}}$ such that $\left( \sqrt{(AA^T)^{-1}} \right)^2 = (AA^T)^{-1}$.  Moreover, $\sqrt{(AA^T)}^{-1}$ commutes with every operator that commutes with $(AA^T)^{-1}$:  See \cite[Chap. $4$, Prop. $4.33$ page $86$]{RD98}.  In particular, since $A$ commutes with $\left( AA^T \right)^{-1} = - (A^2)^{-1}, \hspace{2mm} \sqrt{AA^T}^{-1}$ comutes with $A$.  Moreover $\sqrt{(AA^T)^{-1}}$ is symmetric and positive definite.  Let $$J := (AA^T)^{- \frac{1}{2}}A.$$

$J$ is orthogonal (with respect to $\langle \cdot , \cdot \rangle$ ) because
	\begin{eqnarray*}
	\langle Ju,Jv \rangle & = & \langle (AA^T)^{- \frac{1}{2}}Au, (AA^T)^{- \frac{1}{2}}Av \rangle \textrm{ , by definition of $J$}\\
		& = & \langle Au , (AA^T)^{-1}Av \rangle \textrm{ , since $(AA^T)^{- \frac{1}{2}}$ is symmetric}\\
		& = & \langle Au , (A^T)^{-1}A^{-1}Av \rangle  \\
		& = & \langle Au , (A^T)^{-1}v \rangle \\
		& = & \langle u , A^T (A^T)^{-1}v \rangle \\
		& = & \langle u , v \rangle
	\end{eqnarray*}

From $A$ skew-adjoint ($A^T = -A$), we can deduce that $J^T=-J$: 
	\begin{eqnarray*}
	A^T = -A & \Rightarrow & (AA^T)^{-\frac{1}{2}} A^T = - (AA^T)^{-\frac{1}{2}} A = - J \\
		& \Leftrightarrow & \left( A (AA^T)^{-\frac{1}{2}} \right)^T = -J \textrm{ , since $(AA^T)^T=AA^T$} \\
		& \Leftrightarrow & \left( (AA^T)^{-\frac{1}{2}}A \right)^T = -J \textrm{ , as $A$ and $(AA^T)^{-\frac{1}{2}}$ commute} \\
		& \Leftrightarrow & J^T = - J
	\end{eqnarray*}

Hence,  
	\begin{eqnarray*}
	J^2 & = & J(-J^T) , \textrm{ because $J^T=-J$}\\
		& = & -(AA^T)^{-\frac{1}{2}}A \left( (AA^T)^{-\frac{1}{2}}A \right)^T \\
		& = & -(AA^T)^{-\frac{1}{2}}A A^T (AA^T)^{-\frac{1}{2}} \textrm{ , since $(AA^T)^T=AA^T$} \\
		& = & -AA^T(AA^T)^{-\frac{1}{2}} (AA^T)^{-\frac{1}{2}} \textrm{ , as $AA^T$ and $(AA^T)^{-\frac{1}{2}}$ commute} \\
				& = & -AA^T (AA^T)^{-1} \\
		& = & - \textrm{Id}
	\end{eqnarray*}
	\noindent That is, $J$ is a complex structure on $T_pM$.  Moreover, $J$ is $T$-invariant (because $\langle \cdot , \cdot \rangle$ is and $\omega$ is) and $\omega$-compatible because
 		\begin{eqnarray*}
	\omega(Ju, Jv)  & = &\langle AJu,Jv \rangle \textrm{ , by definition of $A$}\\
		& = & \langle JAu,Jv \rangle \textrm{ , since $J$ and $A$ commute}\\
		& = & \langle Au,v \rangle \textrm{ , since $J$ is orthogonal} \\
		& = & \omega (u,v) \textrm{ , by definition of $A$}
	\end{eqnarray*}

		\begin{eqnarray*}
	\omega(u, Ju)  & = &\langle Au,Ju \rangle \textrm{ , by definition of $A$}\\
		& = & \langle JAu,J^2u \rangle \textrm{ , since $J$ is orthogonal}\\		
		& = & \langle JAu,-u \rangle \textrm{ , since $J^2 =-$Id }\\
		& = & - \langle - \sqrt{AA^T}u,u \rangle \textrm{ , using definition of  $J$ in terms of $A$} \\		
		& = &  \langle \sqrt{AA^T}u,u \rangle \\
		& > & 0 \textrm{ , for $u \neq 0$}
	\end{eqnarray*}
	
	Therefore, $J$ is a $T$-invariant and $\omega$-compatible complex structure on $T_pM$ as wanted.
\end{proof}

\begin{rem} 
\begin{enumerate}
\item The factorization $\sqrt{(AA^T)}J = A$ (equivalently, $J=(AA^T)^{- \frac{1}{2}}A$ as written in the proof) is known as the \textit{polar decomposition} of $A$.

\item In general (as indicated in the proof), the positive inner product defined by $\omega (u , Jv) = \langle \sqrt{AA^T}u,v \rangle$ is different from $\langle u , v \rangle$.

\item This construction of $J$ is canonical after an initial choice of Riemannian metric $M$.
\end{enumerate}
\end{rem}

	We are now ready to examine a Morse component of the momentum map $\mu \colon M \rightarrow \mathbb{R}^n$.  The next lemma show us that no critical points of this component $\mu^{\xi}$ have index or coindex equal to one.   
	
\begin{thm}
	\label{even index}
	
Let $M$ be a strongly symplectic Hilbert manifold.  Suppose that we have an almost periodic $\mathbb{R}^n$ action on $M$ with momentum map $\mu : M \rightarrow \mathbb{R}^n$.  Fix a $\xi \in \mathbb{R}^n$ such that $\mu^{\xi} := \langle \mu(\cdot), \xi \rangle$ is a Morse function.  Then none of the critical points of $\mu^{\xi}$ have index or coindex equal to $1$.
\end{thm}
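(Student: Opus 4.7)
The plan is to reduce the problem to a linear computation on $T_pM$ and then use the weight-space decomposition of the torus action to show that both the negative-definite and positive-definite subspaces of the Hessian are complex subspaces.

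First I would observe that any critical point $p$ of $\mu^{\xi}$ is a fixed point of the generated torus $T$ (since $\xi$ is chosen so that $\mu^{\xi}$ is Morse, critical points and $T$-fixed points coincide by the dichotomy noted just above the statement). Applying the symplectic local linearization theorem (Theorem \ref{locallinearizationsymplectic}) to the $T$-action at $p$, I obtain a $T$-equivariant symplectomorphism from a neighborhood of $0$ in $(T_pM, \omega_p)$ onto a neighborhood of $p$ in $(M,\omega)$. This reduces the computation of $H_p(\mu^{\xi})$ to the linear model: the induced $T$-action on $T_pM$ is linear and symplectic, and the Hessian of $\mu^{\xi}$ is the quadratic form of the induced (linear) momentum map for this action.

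Next, I would invoke Lemma \ref{complex structure} to fix a $T$-invariant $\omega_p$-compatible complex structure $J$ on $T_pM$, giving $T_pM$ the structure of a complex Hilbert space with positive Hermitian inner product $g(u,v) = \omega_p(Ju,v)$. Since $T$ acts by unitary transformations of $(T_pM, g, J)$, the standard Peter-Weyl style decomposition for compact abelian groups acting on a Hilbert space yields an orthogonal decomposition $T_pM = \widehat{\bigoplus}_{\alpha} V_\alpha$ into closed complex weight subspaces indexed by characters $\alpha$ of $T$. On $V_\alpha$ the infinitesimal generator of $\xi$ acts as $L_\xi = 2\pi\langle \alpha,\xi\rangle J$, so the defining relation $d\mu^{\xi} = \iota(L_\xi)\omega_p$ gives, on $V_\alpha$,
\[
H_p(\mu^{\xi})(u,v) = \omega_p(L_\xi u, v) = 2\pi\langle \alpha,\xi\rangle\,\omega_p(Ju,v) = 2\pi\langle \alpha,\xi\rangle\, g(u,v).
\]
Thus the Hessian is a real scalar multiple of the positive-definite form $g$ on each weight space.

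From this the conclusion is immediate: nondegeneracy of $p$ forces $\langle \alpha,\xi\rangle\neq 0$ on every weight space that appears (in particular $V_0 = 0$, consistent with $p$ being an isolated fixed point), and the negative-definite subspace of $H_p(\mu^{\xi})$ is the Hilbert sum of those $V_\alpha$ with $\langle \alpha,\xi\rangle<0$, while the positive-definite subspace is the Hilbert sum of those with $\langle \alpha,\xi\rangle>0$. Each $V_\alpha$ is a complex subspace, hence of real dimension that is even or infinite, and therefore so are its direct sums. In particular the index and coindex of $p$ cannot equal $1$. The main obstacle I expect is justifying the weight-space decomposition for the linearized $T$-representation on the infinite-dimensional Hilbert space $T_pM$; this requires that $T$ act by bounded operators that preserve $g$ (true because $\omega_p$ and $J$ are $T$-invariant and $g$ is built from them), after which standard compact-group representation theory applied to the continuous unitary $T$-representation on the complex Hilbert space $(T_pM, J, g)$ gives the orthogonal decomposition into closed $J$-complex weight subspaces used above.
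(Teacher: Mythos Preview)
Your proposal is correct and follows essentially the same approach as the paper: both reduce to the linear model via the symplectic local linearization theorem (Theorem \ref{locallinearizationsymplectic}), construct a $T$-invariant $\omega_p$-compatible complex structure via Lemma \ref{complex structure}, decompose $T_pM$ into complex weight spaces, and compute that the Hessian restricts to a real scalar multiple of the positive-definite inner product on each weight space. Your version is somewhat cleaner in its handling of the Hessian computation and in explicitly flagging the infinite-dimensional weight decomposition as the step needing care.
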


\begin{rem}  This Lemma is the infinite-dimensional analogue of a lemma in Atiyah, \cite[Lemma $(2.2)$]{MA82} and Guillemin-Sternberg \cite[Theorem $5.3$]{GS82}.
\end{rem}

\begin{proof}[Proof of Lemma \ref{even index}]
	Let $T$ be the torus generated by the almost periodic $\mathbb{R}^n$-action on $M$.  The critical points of $\mu^{\xi}$ are the fixed points of $T$, i.e., $Crit(\mu^{\xi}) = M^T$.  Let $p \in M^T$ and let $\mathbb{H}$ be a strongly symplectic (real) Hilbert space on which $M$ is modelled.  By an appropriate choice of charts we may identify $T_pM$ with $\mathbb{H}$.  Note that different charts induce on $T_pM$ different inner products (but with the same topology).  
	
	We will show that we may choose symplectic coordinates which linearize the action.  In such coordinates, $\mu^{\xi}$ looks like a quadratic.  Note in particular that the eigenspaces of the Hessian of $\mu^{\xi}$ at $p$ are even-dimensional.  The details are as follows:
	
	\noindent \underline{Step 1}:  \textit{existence of a $T$-invariant metric on $T_pM$} \
	
	 Fix some Riemannian metric on $M$.  Choose a $T$-invariant inner product, say $\langle \cdot , \cdot \rangle $, on $T_pM$.  Observe that $T_pM$ is a strongly symplectic (real) vector space since it is equipped with a strongly nondegenerate $2$-form $\omega$.    Then $\omega$ can be identified with some skew-adjoint operator $A \colon T_pM \rightarrow T_pM$  such that $\omega (u,v) = \langle Au,v \rangle$.  
	
	\noindent \underline{Step 2}:  \textit{obtain a $T$-invariant, $\omega$-compatible complex structure on $T_pM$} \
	
	By Theorem \ref{complex structure}, there exists a $T$-invariant and $\omega$-compatible complex structure $J$ on $T_pM$. Namely,  $J = \sqrt{(AA^T)}^{-1}A$. 

\newpage 
	\noindent \underline{Step 3}:  \textit{obtain a $J$-invariant orthogonal decomposition of $T_pM$} \

	Given a complex structure $J$, $T_pM$ becomes a complex vector space where the Hermitian inner product is $T$-invariant.  We may now decompose $T_pM$ into irreducible complex representations according to the weights associated with the linear isotropy representation of $T$ on $T_pM$.

	We obtain a $J$-invariant orthogonal decomposition  $$T_pM = \left( \bigoplus_{{\tiny \begin{array}{c} \alpha \in \mathfrak{t}^*_{\mathbb{Z}} \textrm{ such} \\ \textrm{ that } \left\langle \alpha , \ \xi \right\rangle > 0 \end{array}}} V_{\alpha} \right) \bigoplus \left( \bigoplus_{{\tiny \begin{array}{c} \alpha \in \mathfrak{t}^*_{\mathbb{Z}} \textrm{ such} \\ \textrm{ that } \left\langle \alpha , \ \xi \right\rangle < 0 \end{array}}} V_{\alpha} \right) $$ where each $V_{\alpha}$, for $\alpha >0$, corresponds toa non-trivial character of $T$ while the vector spact $V_0 = T_pM^T$ and is fixed by $T$.  Note that the summands in the above decomposition of $T_pM$ are orthogonal with respect to $\omega$ as well as with respect to the inner product.
	
 	\noindent \underline{Step 4}:  \textit{$\mu^{\xi}$ is a quadratic} 
 	\vspace{2mm}
 	
 	For each $z \in V_{\alpha}$ we claim that $\mu^{\xi}(z) = - \frac{1}{2}|| z ||^2 \alpha $ (meaning that the Hessian $H(z,z) = \left\langle z , z \right\rangle$ by compatibility).  To see this note that the $S^1$ action on $V_{ \alpha}$ is generated by
 	\begin{eqnarray*}
 	\left. \frac{d}{dt}\right|_{t=0} \left( e^{it} \cdot z \right) & = & \left. i e^{it} z \right|_{t=0}  \\
 																	& = & iz \\
 																	& = & Jz.\\
 	\end{eqnarray*}

		Let $X$ be the vector field (associated to the linearized flow) on $T_pM$ that satisfies $X|_z = Jz$. 
		Now, by the Local Linearization Theorem \ref{locallinearizationsymplectic}, there is a $G$-equivariant symplectomorphism (say $\phi$) from an invariant neighbourhood of the origin in $T_pM$ onto an invariant neighbourhood of $p \in M$.  
		
		It follows that 
\begin{eqnarray*}	\left. d \mu^{\xi}\right|_{z}(v) & = & - \left\langle z , v \right\rangle \\
																	& = & - \omega ( v, Jz ) \\
																	& = & \omega ( Jz, v) \\																	
																	& =& \left. \omega_p \right|_{z} ( X, v ) .\\
										\end{eqnarray*}

	In other words, the momentum map is given by $\left\langle \alpha , \xi \right\rangle$ $$\mu^{\xi} (z) =  \sum_{\alpha \in \mathfrak{t}^*_{\mathbb{Z} }} - \frac{1}{2} ||z||^2 \alpha $$ in a coordinate system on $T_pM$.  Hence, all of the eigenspaces of the Hessian of $\mu^{\xi}$ at any $p \in Crit(\mu^{\xi})$ are even-dimensional.  This proves that the critical points of $\mu^{\xi}$ have even index and coindex.  In particular, index$_p(\mu^{\xi})$ and coindex$_p(\mu^{\xi})$ are not equal to one, as wanted.
	
\end{proof}
	
\begin{cor}
	\label{base case}
	
	Let $M$ be a connected strongly symplectic Hilbert manifold.  Suppose that we have an almost periodic $\mathbb{R}$-action on $M$ with momentum map $\mu \colon M \rightarrow \mathbb{R}$.  Suppose that the $\mathbb{R}$ action has isolated fixed points.  Suppose that there exists a complete invariant Riemannian metric on $M$ such that 
	either the map  $\mu \textrm{ or } -\mu \colon \textrm{M} \rightarrow \mathbb{R}$ is bounded from below and satisfies Condition (C).  Then for every $c \in \mathbb{R}$, the level set $\mu^{-1}(c)$ is connected (or empty).  
\end{cor}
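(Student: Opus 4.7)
The plan is to reduce the statement directly to the Connected Levels Theorem \ref{connectedlevels} applied to $\mu$, viewed as a smooth real-valued function on the Hilbert manifold $M$. First, since $\mu^{-1}(c) = (-\mu)^{-1}(-c)$ for every $c \in \mathbb{R}$, and since $-\mu$ is the momentum map for the $\mathbb{R}$-action precomposed with $t\mapsto -t$ (an almost periodic action with the same fixed point set), I may replace $\mu$ by $-\mu$ if necessary to assume from the outset that $\mu$ itself is bounded from below and satisfies Condition (C) with respect to the given complete invariant Riemannian metric.

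Next I would verify that $\mu$ is a Morse function. Its critical set coincides with $M^{\mathbb{R}} = M^T$, the fixed point set of the generated torus action, which by hypothesis is discrete. To check strong nondegeneracy at a critical point $p$, I would invoke the symplectic Local Linearization Theorem \ref{locallinearizationsymplectic} to identify an invariant neighbourhood of $p$ equivariantly and symplectically with an invariant neighbourhood of the origin in $T_pM$, then decompose $T_pM$ into weight spaces $V_\alpha$ exactly as in Step~3 of the proof of Theorem \ref{even index}. Because $p$ is an \emph{isolated} fixed point of the $\mathbb{R}$-action, no weight $\alpha$ occurring in $T_pM$ may pair trivially with the generating direction $\xi\in\mathbb{R}$ — otherwise a nontrivial $V_\alpha$ would be pointwise fixed by $\mathbb{R}$, producing a curve of nearby fixed points through $p$. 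Reading off the local quadratic formula $\mu(z) = -\tfrac{1}{2}\sum_\alpha \|z_\alpha\|^2 \langle\alpha,\xi\rangle$ obtained in that proof, the Hessian of $\mu$ at $p$ is therefore strongly nondegenerate, and so $\mu$ is Morse.

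With $\mu$ now known to be Morse, Theorem \ref{even index} applies and tells us that both $\mathrm{index}_p(\mu)$ and $\mathrm{coindex}_p(\mu)$ are even at every $p\in\mathrm{Crit}(\mu)$, so neither equals $1$. At this point all hypotheses of the Connected Levels Theorem \ref{connectedlevels} are in place for $f=\mu$: a connected Hilbert manifold, a Morse function bounded from below whose critical points have index and coindex distinct from $1$, and a complete Riemannian metric with respect to which Condition (C) holds. The theorem immediately yields connectedness of $\mu^{-1}(c)$ for every $c\in\mathbb{R}$.

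The main obstacle is the middle step — promoting the bare ``isolated fixed points'' hypothesis to the full Morse property of $\mu$. This is not formal and genuinely uses the symplectic linearization together with the weight-space/polar-decomposition analysis of Lemma \ref{complex structure} and Theorem \ref{even index}; without those, isolated critical points need not be nondegenerate in the Hilbert setting. Everything else is bookkeeping: the sign-flip reduction is trivial, the even-index conclusion is a direct quotation of Theorem \ref{even index}, and the connectedness of levels is then delivered wholesale by Theorem \ref{connectedlevels}.
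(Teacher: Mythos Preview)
Your proposal is correct and follows essentially the same route as the paper: reduce to the case where $\mu$ is bounded below, check that $\mu$ is Morse, invoke Theorem~\ref{even index} to rule out index or coindex equal to~$1$, and conclude via Theorem~\ref{connectedlevels}. The only difference is that where the paper disposes of the Morse property in one line (``the critical points of $\mu$ are nondegenerate (by assumption)'', implicitly appealing to the equivalence (ii) recorded at the start of \S5.1), you supply the underlying weight-space argument explicitly; this is a welcome elaboration rather than a departure in strategy.
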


\begin{rem}  Note that Corollary \ref{base case} is a stronger version of the main Convexity Theorem, Theorem \ref{convexity}, where $n=1$ and $H=\{ 0 \}$.
\end{rem}

\begin{proof}[Proof of Lemma \ref{base case}]

We have an almost periodic $\mathbb{R}$-action and thus $\mathfrak{t} = \mathbb{R}$ and $\mathfrak{t}^* = \mathbb{R}$.  Hence, the momentum mapping $\mu \colon M \rightarrow \mathbb{R}$ is a smooth $\mathbb{R}$-valued function.  
Without loss of generality, suppose that $\mu$ is bounded from below (otherwise apply the below argument to $- \mu$).  Since the critical points of $\mu$ are nondegenerate (by assumption) note that $\mu$ is a Morse function.    By Theorem \ref{even index}, none of the critical points of $\mu$ have index or coindex equal to $1$.  Therefore, by Theorem \ref{connectedlevels} the level set $\mu^{-1}(c)$ is connected for every $c \in \mathbb{R}$.  

\end{proof}

\section{Rational Independence and Consequences}

\begin{note}
	\label{rationally independent set}
	
	A collection of real numbers $\theta_1, \ldots , \theta_n$ is said to be \textbf{rationally independent} over $\mathbb{Q}$ if the only $n$-tuple of integers $s_1, \dots , s_n$ such that $s_1 \theta_1 + \cdots + s_n \theta_n = 0$ is the trivial solution in which every $s_i = 0$.

\end{note}
\medskip
\begin{eg}  $\underbrace{\overbrace{3, \hspace{2mm} \sqrt{8}}^{\textrm{rationally independent}}, \hspace{2mm} 1 + \sqrt{2}}_{\textrm{rationally dependent}}$
\end{eg}
\medskip

\begin{note}
	\label{rationally independent torus}
	
	Let $T$ be an $N$-dimensional torus.  Choose a splitting of $T$, then $\mathfrak{t} = \mathbb{R}^N$ and $\ker (\exp) = \mathbb{Z}^N$.  Let $\theta \in \mathbb{R}^N$.  We say that $\theta := ( \theta_1, \ldots, \theta_N )$ has \textbf{rationally independent components}  if the numbers $\theta_1, \ldots, \theta_N$ are rationally independent over $\mathbb{Q}$.
\end{note}

\begin{rem}  Definition \ref{rationally independent torus} is independent of the choice of splitting.  Observe that if definition \ref{rationally independent torus} is satisfied with respect to one splitting of $T$ then it is satisfied with respect to every splitting of $T$ since they differ by a linear invertible map over $\mathbb{Q}$.
\end{rem}

\begin{note}
	\label{rationally independent general}
		
	Suppose that we have an almost periodic $\mathbb{R}^n$ action on $M$.  Let $T$ be the $N$-dimensional generated torus action on $M$ (where $n \leq N$).  We say that $\theta \in \mathbb{R}^n$ has \textbf{rationaly independent components} with respect to the almost periodic $\mathbb{R}^n$ action if the image of $\theta$ in $\mathfrak{t} \cong \mathbb{R}^N$ has rationally independent components. 
$$\xymatrix{
&\mathfrak{t} = \mathbb{R}^N  \ar[d]^{\exp} \\
 \mathbb{R}^n \ar[ur]|{\textrm{{\tiny linear map}}} \ar[r] & T = \mathbb{R}^N / \mathbb{Z}^N }
$$

\end{note}
 \medskip

	The following Lemma \ref{rationally independent} shows us that if the components of $\theta \in \mathbb{R}^n$ are rationally independent then the $\theta$ component of $\mu$, $\mu^{\theta}$, satisfies the two equivalent conditions $(i)$ and $(ii)$ in section $\S 5.1$, i.e., that $\mu^{\theta}$ is Morse and its critical point set is fixed by $T$.  This result will play an important role in establishing our convexity result, Theorem \ref{convexity}, for a generic set of regular values 
of the momentum map  (Cf. Lemma \ref{good projection}).  Moreover, this lemma will illustrate another consequence of the complex structure from the prior section, $\S 5.1$, when we prove that the critical point set of these components of the momentum map are themselves symplectic submanifolds of $M$.  In our case these are just points.
 
\begin{lem}
	\label{rationally independent}
	
	Let $M$ be a connected strongly symplectic Hilbert manifold.  Suppose that we have an almost periodic $\mathbb{R}^n$ action on $M$ with momentum map $\mu : M \rightarrow \mathbb{R}^n$.  Let $T$ be the torus generated by the almost periodic $\mathbb{R}^n$ action.  For every $\theta \in \mathbb{R}^n$, let $\mu^{\theta} \colon M \rightarrow \mathbb{R}$ where $\mu^{\theta}( \cdot) := \langle \mu (\cdot) , \theta \rangle$ be the corresponding component of the momentum map.  If $\theta$ has rationally independent components, then the critical set  of $\mu^{\theta}$ is equal to the fixed point set $M^T$. 
and $Crit ( \mu^{\theta} )$ is a symplectic submanifold of $M$.

\end{lem}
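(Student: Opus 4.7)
The plan is to split the conclusion into three parts: $(a)$ $M^T \subseteq Crit(\mu^\theta)$; $(b)$ the reverse inclusion $Crit(\mu^\theta) \subseteq M^T$, which is the only step using rational independence; and $(c)$ the assertion that $Crit(\mu^\theta)$ is a symplectic submanifold. Throughout, let $X^\theta$ denote the fundamental vector field on $M$ generated by $\theta \in \mathbb{R}^n$ via the $\mathbb{R}^n$-action.

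For $(a)$, the defining property of the momentum map gives $d\mu^\theta|_x(v) = \omega_x(X^\theta|_x, v)$ for every $v \in T_xM$. At any $x \in M^T = M^{\mathbb{R}^n}$ every fundamental vector field of the action vanishes, so $X^\theta|_x = 0$ and hence $d\mu^\theta|_x = 0$. For $(b)$, suppose $d\mu^\theta|_x = 0$; strong nondegeneracy of $\omega$ forces $X^\theta|_x = 0$, so $x$ is fixed by the one-parameter subgroup $\Gamma := \{\exp(t\theta) : t \in \mathbb{R}\} \subseteq T$, the image of $\mathbb{R} \xrightarrow{t \mapsto t\theta} \mathbb{R}^n \to T$. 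Because $\theta$ has rationally independent components in the sense of Definition 5.2.4, its image in $\mathfrak{t} \cong \mathbb{R}^N$ has rationally independent entries with respect to some (equivalently, every) splitting of $T$, and Kronecker's density theorem then gives $\overline{\Gamma} = T$. Continuity of the $T$-action on $M$ yields $\mathrm{Fix}(\Gamma) = \mathrm{Fix}(\overline{\Gamma}) = M^T$, so $x \in M^T$.

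For $(c)$, fix $p \in M^T$. The Local Linearization Theorem \ref{locallinearization}, applied to the compact Lie group $T$, supplies a $T$-equivariant diffeomorphism $f$ from an invariant neighbourhood $V$ of the origin in $T_pM$ (with the isotropy representation) onto an invariant neighbourhood $U$ of $p$ in $M$. Equivariance gives $f^{-1}(U \cap M^T) = V \cap (T_pM)^T$, so $M^T$ is locally modelled on the closed linear subspace $(T_pM)^T$; hence $M^T$ is a submanifold with $T_p(M^T) = (T_pM)^T$. To see it is symplectic, equip $T_pM$ with a $T$-invariant $\omega$-compatible complex structure $J$ via Lemma \ref{complex structure} and decompose $T_pM$ into its complex isotypic summands under $T$,
\[
T_pM \;=\; V_0 \;\oplus\; \bigoplus_{\alpha \neq 0} V_\alpha, \qquad V_0 = (T_pM)^T.
\]
As observed in the proof of Theorem \ref{even index}, the summands are mutually orthogonal with respect to both the $T$-invariant inner product and $\omega$. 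If $v \in V_0$ satisfies $\omega(v,w) = 0$ for every $w \in V_0$, then $\omega$-orthogonality of the decomposition forces $\omega(v,u) = 0$ for all $u \in T_pM$, and strong nondegeneracy of $\omega$ on $T_pM$ gives $v = 0$; thus $\omega$ restricts to a strongly symplectic form on $T_p(M^T)$.

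The main obstacle is step $(b)$: correctly translating between $\theta$ as an element of $\mathbb{R}^n$ and the generated torus $T$ (a closure in a possibly larger ambient torus), and cleanly invoking Kronecker's theorem at the level of $\mathfrak{t}$ rather than the original $(S^1)^N$. The remaining parts reduce to bookkeeping with the tools already developed in Chapters 3 and 5, once one notes that the isotypic decomposition supplied by the complex structure is simultaneously orthogonal for the chosen inner product and for $\omega$.
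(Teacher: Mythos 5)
Your proof is correct. For the main equality $Crit(\mu^\theta)=M^T$ you follow essentially the paper's route: rational independence of the components of $\theta$ makes the one-parameter subgroup generated by $\theta$ dense in $T$ (Kronecker), and continuity of the action then upgrades ``fixed by that subgroup'' to ``fixed by $T$''; the paper phrases this as $Crit(\mu^\theta)=\bigcap_{t\in T}Crit(\mu^t)=M^T$ while you phrase it via vanishing of the fundamental vector field $X^\theta$ and $\mathrm{Fix}(\Gamma)=\mathrm{Fix}(\overline{\Gamma})$, which is cleaner but the same idea. Where you genuinely diverge is the symplectic-submanifold claim: the paper disposes of it in one line by observing that $M^T$ is discrete (leaning on the standing ``isolated fixed points'' hypothesis of Section 5.1, which the lemma statement itself does not carry), whereas you prove it in general by using the equivariant Local Linearization Theorem to model $M^T$ near $p$ on the closed subspace $(T_pM)^T$ and then using the $\omega$-orthogonality of the isotypic decomposition to see that $\omega$ restricts nondegenerately to $V_0=(T_pM)^T$. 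Your version buys a statement valid without isolated fixed points, at the cost of more machinery; the only detail worth spelling out is that the restriction is \emph{strongly} nondegenerate, which follows from your $\omega$-orthogonal splitting $T_pM=V_0\oplus\bigl(\bigoplus_{\alpha\neq 0}V_\alpha\bigr)$: extend a functional on $V_0$ by zero on the complement, represent it as $\omega(v,\cdot)$ using strong nondegeneracy on $T_pM$, and project $v$ to $V_0$.
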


\begin{rem}  This Lemma \ref{rationally independent} is the almost periodic $\mathbb{R}^n$ action analogue of the well known torus action result \cite{MS98} pg 186:  Let $(M, \omega )$ be a compact connected symplectic manifold and $\mathbb{T}^n$ be a torus action on $M$ with momentum map $\mu \colon M \rightarrow \mathbb{R}^n$.  Then for every $\theta \in \mathbb{R}^n$ with rationally independent components, the critical set of the function $H_{\theta} := \langle \mu, \theta \rangle \colon M \rightarrow \mathbb{R}$ is fixed under the $\mathbb{T}^n$ action.  Moreover, the critical set of $H_{\theta}$ is a symplectic submanifold of $M$.
\end{rem}

\begin{proof}[Proof of Lemma \ref{rationally independent}]
Let $X$, $Y \in \mathfrak{t} = \mathbb{R}^N$. Note that 

\begin{eqnarray*} \mu^{kX}( \cdot ) & = & \langle \mu (\cdot), kX \rangle \\
													& = & k \langle \mu ( \cdot ), X \rangle \\
													& = & k \mu^X ( \cdot )
\end{eqnarray*}

\noindent for all $k \in \mathbb{Z}$, so $Crit(\mu^{kX}) = Crit( \mu^X)$. 
	
	Let $\theta \in \mathbb{R}^n$ such that $\theta$ has rationally independent components.  Recall that if $\theta = (\theta_1, \theta_2, \ldots, \theta_n)$ has rationally independent components then we can choose a lattice $\Lambda \subset \mathbb{Z}^N$ of $\mathfrak{t}$ such that the closure of the one parameter subgroup $\{ \exp(s \textrm{Im}( \theta )) \hspace{2mm} | \hspace{2mm} s \in \mathbb{R} \}$ is $T \cong U(1)^N$.  Said another way, the set of vectors $\{ s \textrm{Im} (\theta )+k \hspace{2mm} | \hspace{2mm} \textrm{ for all } s \in \mathbb{R} \textrm{ and } k \in \mathbb{Z}^N \}$ form a dense set in $\mathbb{R}^N$.  
	Then, since $\overline{ \{ s \theta + k \hspace{2mm} | \hspace{2mm} s \in \mathbb{R}, \hspace{2mm} k \in \mathbb{Z}^N \} } = \mathbb{R}^N$, we may conclude that $$Crit(\mu^{\theta}) = \bigcap_{t \in T} \hspace{1mm} Crit(\mu^t).$$  
	\noindent But for $\mathbb{R}$-valued  momentum maps a critical point of the momentum map is the same as a fixed point of the action.  Therefore,
\begin{eqnarray*} Crit( \mu^\theta ) & = & \bigcap_{t \in T} \hspace{1mm} Crit( \mu^t ) \\
												& = & \bigcap_{t \in T} \hspace{1mm} Fix( \mu^t ) \textrm{ , where $Fix(\mu^t)$ are the fixed points}\\
												& = & M^T \textrm{ ,  where $M^T$ denotes the $T$-fixed points in $M$} 
\end{eqnarray*}

\noindent as desired.

We can use this to prove that $Crit(\mu^{\theta})$ is a symplectic manifold:  
Since $M^T$ is a discrete set it is a symplectic submanifold.  It then follows that $Crit(\mu^{\theta}) = M^T$ is a symplectic submanifold of $M$.
\end{proof}

\section{Good Projections}

In this section we use the notation Fix($\star$) to denote the fixed point set of the $\mathbb{R}^n$ action whose momentum map is the function $\star$.

\begin{lem}
	\label{good projection}

		Let $M$ be a connected strongly symplectic Hilbert manifold.  Suppose that we have an almost periodic $\mathbb{R}^{n+1}$ action on $M$ with momentum map $\mu : M \rightarrow \mathbb{R}^{n+1}$.  Suppose that the $\mathbb{R}^{n+1}$ action has isolated fixed points.  Suppose that there exists a complete invariant Riemannian metric on $M$ such that there exists a hyperplane $H$ of $\mathbb{R}^{n+1}$ such that for all $\xi \in \mathbb{R}^{n+1} \smallsetminus H$ the component $\mu^{\xi} := \langle \mu, \xi \rangle \colon \textrm{M} \rightarrow \mathbb{R}$ is bounded from one side  and satisfies Condition (C).  
Then 
there exists a projection $\pi \colon \mathbb{R}^{n+1} \rightarrow \mathbb{R}^n$ satisfying 
		
\begin{list}{}{} 
	\item{(i)} the $\mathbb{R}^n$ action generated by $\mu' := \pi \circ \mu$ is almost periodic and has isolated fixed points; and
	
	\item{(ii)} there exists a hyperplane $H' \subset \mathbb{R}^n$ such that for all $\xi' \in \mathbb{R}^n \smallsetminus H'$ the component $(\mu')^{\xi'} \colon M \rightarrow \mathbb{R}$ is bounded from one side and satisfies condition (C).\
	\end{list}
	
\end{lem}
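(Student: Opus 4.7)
The plan is to pass to the dual description and reduce the problem to a counting argument. A projection $\pi \colon \mathbb{R}^{n+1} \to \mathbb{R}^n$ corresponds to its transpose $\iota := \pi^T \colon \mathbb{R}^n \hookrightarrow \mathbb{R}^{n+1}$, and under this identification $\mu' = \pi \circ \mu$ is the momentum map for the restriction of the $\mathbb{R}^{n+1}$-action along $\iota$, with the key identity $(\mu')^{\xi'} = \mu^{\iota(\xi')}$ for every $\xi' \in \mathbb{R}^n$. The restricted action is automatically almost periodic, since it factors as $\mathbb{R}^n \to \mathbb{R}^{n+1} \to (S^1)^N$ through $\iota$, so the problem reduces to choosing the hyperplane $W := \iota(\mathbb{R}^n) \subset \mathbb{R}^{n+1}$ so as to arrange (i) isolated fixed points for the restricted action, and (ii) that $H' := \iota^{-1}(H)$ is a proper hyperplane of $\mathbb{R}^n$.

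Condition (ii) holds exactly when $W \neq H$ (both being $n$-dimensional); granted this, for $\xi' \notin H'$ one has $\iota(\xi') \notin H$, so the boundedness and Condition (C) hypotheses on $\mu^{\iota(\xi')}$ transfer verbatim to $(\mu')^{\xi'}$. Condition (i) is the substantive one. Let $T$ denote the torus generated by the given $\mathbb{R}^{n+1}$-action, let $\mathfrak{t}$ be its Lie algebra with integer lattice $\Lambda = \ker(\exp \colon \mathfrak{t} \to T)$, and let $\phi \colon \mathbb{R}^{n+1} \to \mathfrak{t}$ be the infinitesimal covering map. The fixed point set of the restricted action equals $M^{T'}$, where $T'$ is the closure of $\iota(\mathbb{R}^n)$ in $T$, so since $M^T$ is discrete by hypothesis it suffices to arrange $T' = T$. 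Following the discussion preceding Lemma \ref{rationally independent}, this is equivalent to $\phi(W)$ not being contained in any rational hyperplane $K_s := \{v \in \mathfrak{t} \mid s(v) = 0\}$ for $s \in \Lambda^* \smallsetminus \{0\}$; equivalently, $W \not\subset \phi^{-1}(K_s)$ for any such $s$.

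The conclusion then follows from a counting argument. Hyperplanes of $\mathbb{R}^{n+1}$ are parameterized by the real projective space $\mathbb{R}P^n$, which is uncountable for $n \geq 1$. Since $\phi(\mathbb{R}^{n+1})$ is dense in $T$ by the very definition of the generated torus, none of the rational hyperplanes $K_s$ contains it, so each $\phi^{-1}(K_s)$ is a proper subspace of $\mathbb{R}^{n+1}$; it is therefore of codimension exactly one, and $W \subset \phi^{-1}(K_s)$ forces $W = \phi^{-1}(K_s)$. Thus the bad hyperplanes for (i) form a countable subset of $\mathbb{R}P^n$, and condition (ii) excludes the single additional point $\{H\}$. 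A generic $W$ satisfies both conditions, and taking $\pi$ to be any projection whose transpose has image $W$ yields the required projection.

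The main obstacle is the translation from the density condition in the torus $T$ to a condition about hyperplanes of $\mathbb{R}^{n+1}$; one must use that the $\mathbb{R}^{n+1}$-action itself generates $T$ (so that $\phi(\mathbb{R}^{n+1})$ is dense in $T$) in order to conclude that each forbidden set $\phi^{-1}(K_s)$ has codimension exactly one rather than possibly being all of $\mathbb{R}^{n+1}$. Once this is established, the uncountability of $\mathbb{R}P^n$ makes the existence of a good $W$ immediate, and verification of the two required properties is routine.
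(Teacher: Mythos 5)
Your proof is correct and follows essentially the same route as the paper's: both reduce (ii) to the transfer identity $(\mu')^{\xi'} = \mu^{\pi^T\xi'}$ with $H' = (\pi^T)^{-1}(H)$, and both establish (i) by showing that the projections which fail to have isolated fixed points form a countable family of hyperplanes (the paper phrases this dually, as the set of kernel lines $\langle p\rangle$ being a countable, measure-zero union of lines, and routes the fixed-point criterion through its rational-independence lemma rather than directly through density of the restricted subgroup in the generated torus $T$). The only differences are bookkeeping: image-hyperplane versus kernel-line parameterization, and countability in $\mathbb{R}P^{n}$ versus Lebesgue measure zero as the notion of smallness.
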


\begin{proof}
	We first prove Lemma \ref{good projection} in the special case of a \textit{torus action} on $M$, that is, in the case when we have a \textit{periodic} $\mathbb{R}^n$ action on $M$.  This is in preparation to set up for the almost periodic case.
	
	For property $(i)$:  Let $A_{RI} \subseteq \mathbb{R}^{n+1}$ be the set of elements whose members are rationally independent in $\mathbb{R}^{n+1}$ and denote its complement by $A_{RD} \subseteq \mathbb{R}^{n+1}$, i.e. 
 \begin{eqnarray*} A_{RD} & = &  \mathbb{R}^{n+1} \smallsetminus A_{RI} \\
 										& = & \{  v \in \mathbb{R}^{n+1}  \hspace{2mm} |\hspace{2mm} \exists \hspace{1mm} s_1, \ldots , s_{n+1} \in \mathbb{Q}  \textrm{, not all zero,  such that } \Sigma s_iv_i = 0 \} \\
 										& = & \{ v \in \mathbb{R}^{n+1} \hspace{2mm} | \hspace{2mm} \exists w \in \mathbb{Q}^{n+1} \smallsetminus \{ 0 \} \textrm{ such that } \langle v , w \rangle = 0 \}. \
\end{eqnarray*} 
 
 	\begin{list}{}{}
	\item $\bullet$ {\bf Proposition 1}:  Let $\pi \colon \mathbb{R}^{n+1} \rightarrow \mathbb{R}^n$.  Write $\ker(\pi) = \langle p \rangle$.  Suppose that there exists $\theta \in A_{RI}$ such that $\theta \perp p$ (i.e. $\langle \theta , p \rangle = 0$). Then $Fix( \mu') = M^T$. \\  \textit{Proof}: \quad We know that $M^T \subseteq Fix(\mu')$.  So we need to show that the opposite containment holds, i.e., show $Fix(\mu') \subseteq M^T$.  Choose $\theta$ as in the hypothesis.  Let $x \in Fix(\mu')$.  Then $0 = d \mu'_x = \pi \circ d \mu_x$.  So Im$( d \mu_x) \subseteq \ker(\pi)$.  But $\theta \perp \textrm{Im}(d \mu_x)$ by hypothesis.  That is, $\langle d \mu_x( \cdot), \theta \rangle = 0$, i.e., $d \mu^{\theta}_x = 0$.  Hence $x \in Fix(\mu^{\theta}) = Crit(\mu^{\theta})$.  Then we have that $Crit(\mu^{\theta}) = Fix( \mu^{\theta} ) \subseteq M^T.$  However, the inclusion is an equality because $Crit(\mu^{\theta}) = M^T$ since $\theta$ has rationally independent components by Lemma \ref{rationally independent}.  Thus $Fix(\mu') = M^T$ ending the proof of Proposition $1$. $\blacksquare$
	
	\end{list}

 Let 
\begin{eqnarray*} S & = & \{ p \in \mathbb{R}^{n+1} \hspace{2mm} | \hspace{2mm} p^{\perp} \subseteq A_{RD} \} \\
							& = & \{  p \in \mathbb{R}^{n+1} \hspace{2mm} | \hspace{2mm} \forall a \in p^\perp, \exists q \in \mathbb{Q}^{n+1} \smallsetminus \{ 0 \} 
\textrm{ with } \langle q,a \rangle = 0 \} \\
							& = & \{ p \in \mathbb{R}^{n+1} \hspace{2mm} | \hspace{2mm} p^\perp \subseteq \cup_{q \in \mathbb{Q}^{n+1} \smallsetminus \{ 0 \} } \hspace{1mm} q^\perp \} \\
							& \subseteq & \mathbb{R}^{n+1}.\
							\end{eqnarray*}
					
	By Proposition $1$, in order to show that there is a projection $\pi$ such that the periodic $\mathbb{R}^n$ action generated by $\mu'$ has isolated fixed points, it suffices to show that the set $S$ has measure zero.  The complement of $S$ is the union of kernels $\langle p \rangle$ of desired projections.  So if $S$ has measure zero then its complement must be nonempty.				
			
	\begin{list}{}{}	
	\item $\bullet$ {\bf Proposition 2}:  The set $S$ has measure zero in $\mathbb{R}^{n+1}$.  \
						
		\textit{Proof:} \quad To prove this we will require a preliminary result.  Let $H$ and $\{ H_i \}_{i=1}^{\infty}$ be hyperplanes in $\mathbb{R}^{n+1}$. 
		Suppose that $H \subseteq \displaystyle \cup_{i=1}^{\infty} H_i$.  Then there exists an $i \in \mathbb{N}$ such that $H \subset H_i$.  To prove this, first note that $$H = \bigcup_{i=1}^{\infty} ( H \cap H_i).$$  Suppose for contradiction that for all $i$ we have that $H \cap H_i \subsetneq H$.  If $H \cap H_i \ne H$ then $H \cap H_i$ has measure zero in $H$.  So if there is no $H_i$ with $H \cap H_i = H$, then $H$ is a countable union of sets of measure zero in $H$, which means that $H$ itself has measure zero in $H$.  This is a contradiction. Thus $ H \subset H_i$ for some $i$ and this ends the proof of the preliminary result.

	It follows that 
\begin{eqnarray*} S & = & \{ p \hspace{2mm} | \hspace{2mm} p^\perp \subseteq \cup_{q \in \mathbb{Q}^{n+1} \smallsetminus \{ 0 \}} \hspace{1mm} q^\perp \} \\
							& = & \bigcup_{q \in \mathbb{Q}^{n+1} \smallsetminus \{ 0 \} } \{ p \hspace{2mm} | \hspace{2mm} p^\perp \subseteq q^\perp \} \textrm{ , by the above claim}\\
							& = & \bigcup_{q \in \mathbb{Q}^{n+1} \smallsetminus \{ 0 \} } \{ p \hspace{2mm} | \hspace{2mm} p^\perp =  q^\perp \} \textrm{, since $p^{\perp}$ cannot be a proper subset of $q^{\perp}$}\\
\end{eqnarray*}

\noindent This is a countable union of lines. 
This completes the proof of Proposition$2$. $\blacksquare$
	\end{list}

	We now generalize the preceding arguments to establish the \textit{almost periodic} $\mathbb{R}^{n+1}$ action on $M$ case.
	
	Let $i \colon \mathbb{R}^{n+1} \rightarrow \mathbb{R}^N$ be a linear map such that the composition $$\mathbb{R}^{n+1} \rightarrow \mathbb{R}^N \rightarrow \mathbb{R}^N / \mathbb{Z}^N := T$$ has dense image in $T$.  
	
$$\xymatrix{
&&\mathfrak{t} = \mathbb{R}^N  \ar[d]^{\exp} \\
 \mathbb{R}^{n+1} \ar[r]^{\pi} & \mathbb{R}^n \ar[ur]^i \ar[r] & T = \mathbb{R}^N / \mathbb{Z}^N }
$$ 	
	\medskip

	Let $\tilde{A}_{RI} \subseteq \mathbb{R}^{n+1}$ be the set of rationally independent elements in $\mathbb{R}^{n+1}$.  That is, 
	$$\tilde{A}_{RI} = \{ \theta \in \mathbb{R}^{n+1} \hspace{2mm} | \hspace{2mm} (i \circ \pi) ( \theta ) \in \mathbb{R}^N \textrm{ has rationally independent components} \}$$

Let $\tilde{A}_{RD} \subseteq \mathbb{R}^{n+1}$ denote its complement.

\item $\bullet$ {\bf Proposition 3}:   Let $\pi \colon \mathbb{R}^{n+1} \rightarrow \mathbb{R}^n$.  Write $\ker(\pi) = \langle p \rangle$.  Suppose that there exists $\theta \in \tilde{A}_{RI}$ such that $\theta \perp p$ (i.e. $\langle \theta , p \rangle = 0$). Then $Fix( \mu') = M^T$. \\

\textit{Proof:} \quad We know that $M^T \subseteq Fix(\mu')$.  So we need to show that the opposite containment holds.  Choose $\theta$ as in the hypothesis.  Let $x \in Fix(\mu')$.  Then $0 = d \mu'_x = \pi \circ d \mu_x$.  So Im$( d \mu_x) \subseteq \ker(\pi)$.  But $\theta \perp \textrm{Im}(d \mu_x)$ by hypothesis.  That is, $\langle d \mu_x( \cdot), \theta \rangle = 0$, i.e., $d \mu^{\theta}_x = 0$.  Hence $x \in Fix(\mu^{\theta}) = Crit(\mu^{\theta})$ since $\mu^{\theta}$ is a real-valued function.  But $Crit(\mu^{\theta}) = M^T$ by Lemma \ref{rationally independent}, because $\theta$ is rationally independent. Thus, $Fix( \mu^{\theta} )  = Crit(\mu^{\theta}) \subseteq M^T$ ending the proof of Proposition $3$.  $\blacksquare$
	
	By Proposition $3$, in order to show that there is a projection $\pi$ such that the almost periodic $\mathbb{R}^n$ action generated by $\mu'$ has isolated fixed points, it suffices to show that the set $\tilde{A}_{RD}$ has measure zero in $\mathbb{R}^{n+1}$.  This is sufficient because if $\tilde{A}_{RD}$ has measure zero in $\mathbb{R}^{n+1}$ then its complement $\tilde{A}_{RI}$ must be nonempty. 
\item $\bullet$ {\bf Proposition 4}:  The complement of the set $$ \tilde{A }_{RI} = \{ \theta \in \mathbb{R}^{n+1} \hspace{2mm} | \hspace{2mm} (i \circ \pi)( \theta) \in \mathbb{R}^N \textrm{ has rationally independent components } \}$$ has measure zero in $\mathbb{R}^{n+1}$. 

	\textit{Proof:} \quad Let $\theta \in \mathbb{R}^{n+1}$.  Denote its image $(i \circ \pi)( \theta) $ by $(i \circ \pi) ( \theta ) := \tilde{ \theta } = ( \tilde{ \theta_1}, \ldots , \tilde{ \theta_N} )$.  Note that
 \begin{eqnarray*} \tilde{A}_{RD} & = &  \mathbb{R}^{n+1} \smallsetminus \tilde{A}_{RI} \\
 										& = & \{ \theta \in \mathbb{R}^{n+1} \hspace{2mm} | \hspace{2mm} \exists c \in \mathbb{Z}^N \smallsetminus \{ 0 \} \textrm{ such that } \langle c , \tilde{\theta} \rangle := \Sigma_{j=1}^N c_j \tilde{\theta}_j =  0 \}. \\
 										& = & \bigcup_{c \hspace{1mm} \in \hspace{1mm} \mathbb{Z}^{N} \smallsetminus \{ 0 \} } \{ \theta \in \mathbb{R}^{n+1} \hspace{2mm} | \hspace{2mm} \langle c, \tilde{ \theta } \rangle = 0 \} \
\end{eqnarray*} 

\noindent is a countable union of hyperplanes in $\mathbb{R}^{n+1}$.  Hence the complement of $\tilde{A}_{RI}$ has measure zero in $\mathbb{R}^{n+1}$.  $\blacksquare$

To summarize what we have done, Proposition $3$ shows us that to establish $(i)$ it is sufficient to show that the set $\tilde{A}_{RD} \subset \mathbb{R}^{n+1}$ has measure zero in $\mathbb{R}^{n+1}$.  Then by Proposition $4$ we know that $\tilde{A}_{RD}$ has measure zero.  This completes the proof of $(i)$.

For $(ii)$:  
Let $ \pi \colon \mathbb{R}^{n+1} \rightarrow \mathbb{R}^n$ be any projection such that $\pi^* (\mathbb{R}^n) \neq H$ (in $\mathbb{R}^{n+1}$), where $\pi^* := i \colon \mathbb{R}^{n} \rightarrow \mathbb{R}^{n+1}$.  Choose hyperplane $H' = \left( \pi^* \right)^{-1}H \textrm{ (the pre-image of $H$) } = \{ \xi' \in \mathbb{R}^n \hspace{2mm} | \hspace{2mm} \pi^*( \xi' ) \in H \} \subset \mathbb{R}^n$.  Observe that $H'$ has dimension $n-1$. Let $\xi' \in (H')^c$.  Let $\xi = \pi^* \xi'$.  Then the component \begin{eqnarray*} \mu^{\xi} & = & \langle \mu, \xi \rangle \\
										& = &  \langle \mu, \pi^* \xi' \rangle \\
										& = & \langle \pi \mu , \xi' \rangle \\
										& = & \langle \mu' , \xi' \rangle \\
										& = & (\mu')^{\xi'}. \
\end{eqnarray*}	

We claim that $\xi = \pi^* \xi' \in (H)^c$.
\medskip
$$ \xymatrix{
M \ar@(ul,dl)[]_{ \mathbb{R}^{n+1}} \ar[r]^(.3){\mu} \ar[dr]_(.35){\mu'} 
& (\mathbb{R}^{n+1})^* \ar[d]^{\pi = i^*} \ar[dr]^{\cdot \hspace{1mm} \xi \in \mathbb{R}^{n+1}} \\
& (\mathbb{R}^n)^* \ar[r]_(.6){\cdot \hspace{1mm} \xi' \in \mathbb{R}^n} & \mathbb{R} } $$
\medskip

This is clear from the above diagram together with the definition of $H'$.  %

Thus by hypothesis $\mu^{\xi}$ is bounded from below and satisfies condition (C).  But we saw that $\mu^{\xi} = (\mu')^{\xi'}$.  Therefore (ii) holds as wanted.

\textbf{End of Proof of Lemma \ref{good projection}.}
\end{proof}

\section{The Connectivity and Convexity Theorems}

The next Theorem, Theorem \ref{reg values residual}, may be of independent interest.  We prove that in the presence of an almost periodic $\mathbb{R}^n$ action on $M$, the set of singular values of the resulting momentum map is contained in a countable union of hyperplanes.  In particular, the set of regular values of the momentum map is residual in $\mathbb{R}^n$.  It is tempting to use the Sard-Smale Theorem \cite{SS65}, an infinite-dimensional versions of Sard's Theorem, but we cannot in the setting of this thesis.  The Sard-Smale Theorem requires that the map be Fredholm.
	
\begin{thm}
	\label{reg values residual}
	
	Let $M$ be a connected strongly symplectic Hilbert manifold.  Suppose that we have an almost periodic $\mathbb{R}^n$ action on $M$ with momentum map $\mu \colon M \rightarrow \mathbb{R}^n$.  Suppose that the $\mathbb{R}^{n}$ action has isolated fixed points.  Then the set of singular values of $\mu$ is contained in a countable union of hyperplanes.  In particular, the set of regular values of $\mu$ is residual in $\mathbb{R}^n$.
\end{thm}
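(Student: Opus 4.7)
The plan is to identify each singular point of $\mu$ with a point fixed by some positive-dimensional closed subtorus of the generated torus $T$, then to exploit that (i) only countably many subtori are relevant, (ii) each of the corresponding fixed-point sets $M^K$ is a submanifold with countably many components, and (iii) $\mu$ is constant on each such component in a certain direction, pinning its image to a single hyperplane. Let $L\colon\mathbb{R}^n\to\mathfrak{t}$ denote the Lie algebra map underlying the almost periodic structure $\mathbb{R}^n\to(S^1)^N\supseteq T$, set $V:=L(\mathbb{R}^n)\subset\mathfrak{t}$, and for $x\in M$ write $\mathfrak{t}_x$ for the Lie algebra of the stabilizer $T_x\subset T$.

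First I would pin down the singular set. Dualizing the momentum map identity, $(d\mu_x)^*\eta=d\mu^\eta_x=\iota_{X_\eta(x)}\omega_x$ for $\eta\in\mathbb{R}^n$, so $x$ is singular precisely when some nonzero $\eta$ satisfies $(d\mu_x)^*\eta=0$; strong nondegeneracy of $\omega$ then forces $X_\eta(x)=0$, i.e.\ $L(\eta)\in\mathfrak{t}_x$. Hence $x$ is singular iff $V\cap\mathfrak{t}_x\neq 0$. The identity component $K_x:=T_x^{\,0}$ is a subtorus of $T$ with $\mathfrak{k}_x=\mathfrak{t}_x$, and closed connected subgroups of $T$ correspond bijectively with primitive sublattices of $\ker(\exp\colon\mathfrak{t}\to T)\cong\mathbb{Z}^N$, which form a countable family. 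Let $\mathcal{K}$ denote the countable sub-collection of subtori $K\subset T$ with $V\cap\mathfrak{k}\neq 0$; then the singular set of $\mu$ is contained in $\bigcup_{K\in\mathcal{K}}M^K$.

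Second, I would analyze $\mu$ on each $M^K$. Applying the Local Linearization Theorem~\ref{locallinearization} to the compact group $K$ at each of its fixed points identifies a neighbourhood of any $x\in M^K$ in $M^K$ with the $K$-invariant subspace of $T_xM$, so $M^K$ is a submanifold of $M$; since $M$ is a paracompact connected Hilbert manifold modelled on a separable Hilbert space, it is second countable, and this property is inherited by $M^K$, so $M^K$ has at most countably many connected components. Now pick any $\eta\in L^{-1}(V\cap\mathfrak{k})\setminus\{0\}$: as $L(\eta)\in\mathfrak{k}$, the vector field $X_\eta$ vanishes identically on $M^K$, hence $d\mu^\eta|_{M^K}=0$, so $\mu^\eta$ is constant with some value $c_C$ on each connected component $C$ of $M^K$, forcing $\mu(C)\subset\{y\in\mathbb{R}^n:\langle y,\eta\rangle=c_C\}$, an affine hyperplane in $\mathbb{R}^n$.

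Ranging over the countable collection of pairs $(K,C)$ places all singular values of $\mu$ inside a countable union of hyperplanes. The ``in particular'' clause then follows from the Baire Category Theorem~\ref{Baire}: each such hyperplane is closed with empty interior, the countable intersection of their complements is open dense, and it lies inside the set of regular values. The main obstacle I foresee is establishing the submanifold structure of each $M^K$ in this infinite-dimensional strongly symplectic setting, which is exactly what Theorem~\ref{locallinearization} delivers; beyond that, the argument reduces to the classical observation that the momentum map is constant along directions in the stabilizer subalgebra, together with the algebraic enumeration of closed subtori of a torus by sublattices of its integer lattice.
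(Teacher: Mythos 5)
Your proposal is correct and follows essentially the same route as the paper's proof: both identify the singular set with the union of fixed-point sets $M^K$ of the countably many positive-dimensional subtori $K$ of the generated torus, use the Local Linearization Theorem \ref{locallinearization} together with second countability to see that each $M^K$ is a submanifold with countably many components, observe that $\mu$ of each component lies in an affine hyperplane because some component $\mu^{\eta}$ is constant there, and conclude via Baire. Your write-up is in fact somewhat more explicit than the paper's (e.g.\ the duality argument showing $x$ is singular iff $V\cap\mathfrak{t}_x\neq 0$, and the enumeration of subtori by primitive sublattices), but the underlying argument is the same.
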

	
\begin{rem}
	We will only use that the regular values of the momentum map are residual in $\mathbb{R}^n$ for the purpose of this thesis.
\end{rem}

\begin{proof}
	Let $T$ be the $N$-dimensional generated torus action on $M$ and let $H \subset T$ be a connected subgroup with $dim(H)>0$.  Note that $H$ must be a torus.  Let $x \in M$. 	
	
	Note that the critical points of $\mu$ are exactly those points whose stabilizer has positive dimension, and a connected component of the set of points with a fixed stabilizer of positive dimension gets mapped into a proper affine subspace of $\mathfrak{t}^* \cong \mathbb{R}^N$.  Because $M$ is second countable, it is sufficient to show that each point in $M$ has a neighbourhood in which at most countably many stabilizers occur.  Recall that 
\begin{list}{}{}
	\item[$\bullet \hspace{2mm}$] a linear representation of a compact abelian Lie group decomposes into a direct sum (in the Hilbert space sense) of subspaces, on each of which the group acts through a homomorphism to $S^1$; and
	\item[$\bullet \hspace{2mm}$] a strictly decreasing sequence of subgroups of a compact abelian group must be finite.
\end{list}

	First, the fixed point set of $H$, denoted $M^H$, coincides with that of the closure of $H$ (by continuity), so we can assume that $H$ is closed.  Consider a connected component $N$ of $M^H$, and $x \in N$.  By the Local Linearization Theorem \ref{locallinearization}, $M^H$ is a locally finite disjoint union of closed connected submanifolds.  It follows that $$Crit(\mu) = \bigcup_{\textrm{\begin{tiny}
subtori $H \subseteq T$ \end{tiny}} } M^H$$  \noindent is a countable union.

Let $j \colon \mathbb{R}^n \rightarrow T$.  Observe that $Stab_{\mathbb{R}^n}(x) = j^{-1}(H)$ where $H = Stab_T(x)$.  Then  by definition of the momentum map $$\textrm{CritValues}(\mu) =  \bigcup_{\begin{tiny} \begin{array}{c} \textrm{subtori} \hspace{0.5mm} H \subseteq T \textrm{such} \\ \textrm{that}  \hspace{0.5mm} j^{-1}(H) \subseteq \mathbb{R}^n \\ \textrm{and} \textrm{dim} \hspace{0.5mm}  ( j^{-1}(H) ) > 0 \end{array} \end{tiny}} \underbrace{\bigcup_{\begin{tiny} \begin{array}{c} \textrm{components} \\ N \hspace{0.5mm} \textrm{of} \hspace{0.5mm} M^H \end{array} \end{tiny}} \mu(N)}_{\textrm{countable union}}.$$ 
\medskip

\noindent Note that each $\mu(N)$ is contained in an affine subspace of $\mathbb{R}^n$ of positive codimension.  It follows that the complement of the set CritValues($\mu$) is a countable intersection of residual sets, and hence residual.  That is, the regular values of $\mu$ are residual.
\end{proof}

	We require one last ingredient for the proof of the Convexity Theorem, Theorem \ref{convexity}.  Namely, we require a lemma which makes explicit the relationship between statements $(A_n)$ and $(B_n)$ below.  We now state and prove this result.

\begin{lem}
	\label{a_n B_n}
	
	For every $n \in \mathbb{N}$, consider the following two statements
	
	\begin{list}{}{}
	\item{($A_n$)} Let $M$ be a connected strongly symplectic Hilbert manifold.  Suppose that we have an almost periodic $\mathbb{R}^n$ action on $M$ with momentum map $\mu \colon M \rightarrow \mathbb{R}^n$.  Suppose that the $\mathbb{R}^{n}$ action has isolated fixed points. Suppose that there exists a complete invariant Riemannian metric on $M$ such that there exists a hyperplane $H$ of $\mathbb{R}^n$ such that for all $\xi \in \mathbb{R}^n \smallsetminus H$ the map  $\mu^{\xi} \colon \textrm{M} \rightarrow \mathbb{R}$ is bounded from one side and satisfies Condition (C).  Then the set $$\{ c \in \mathbb{R}^n \hspace{2mm} | \hspace{2mm} c \textrm{ is a regular value of $\mu$ and } \mu^{-1}(c) \textrm{ is connected } \} \subseteq \mathbb{R}^n$$ is residual;
	\item{($B_n$)} Let $M$ be a connected strongly symplectic Hilbert manifold.  Suppose that we have an almost periodic $\mathbb{R}^n$ action on $M$ with momentum map $\mu \colon M \rightarrow \mathbb{R}^n$.  Suppose that the $\mathbb{R}^{n}$ action has isolated fixed points and suppose that $\mu(M)$ is closed.  Suppose that there exists a complete invariant Riemannian metric on $M$ such that there exists a hyperplane $H$ of $\mathbb{R}^n$ such that for all $\xi \in \mathbb{R}^n \smallsetminus H$ the map  $\mu^{\xi} \colon \textrm{M} \rightarrow \mathbb{R}$ is bounded from one side and satisfies Condition (C). Then the image $\mu(M) \subset \mathbb{R}^n$ is convex.
\end{list}

	Suppose that ($A_n$) is true for all $n$.  Then ($B_{n}$) is true for all $n$.
\end{lem}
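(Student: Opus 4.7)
My plan is to prove the statement by induction on $n$. The base case $n = 1$ is immediate: under the hypotheses of $(B_1)$, the image $\mu(M) \subseteq \mathbb{R}$ is the continuous image of the connected manifold $M$ and is hence connected; combined with the closedness hypothesis, $\mu(M)$ must be a closed (possibly unbounded) interval in $\mathbb{R}$, which is convex.

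For the inductive step, I would assume $(B_k)$ for $k < n$ and take $\mu \colon M \rightarrow \mathbb{R}^n$ satisfying the hypotheses of $(B_n)$. The first step is to reduce the dimension via Lemma \ref{good projection}: this provides a projection $\pi \colon \mathbb{R}^n \rightarrow \mathbb{R}^{n-1}$ such that $\mu' := \pi \circ \mu$ generates an almost periodic $\mathbb{R}^{n-1}$ action with isolated fixed points and satisfies the hyperplane boundedness and Condition (C) hypotheses of $(A_{n-1})$. Invoking the assumption $(A_{n-1})$ for $\mu'$ yields a residual set $Y \subseteq \mathbb{R}^{n-1}$ of regular values for which $(\mu')^{-1}(y)$ is connected. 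For each such $y \in Y$ the slice $\mu(M) \cap \pi^{-1}(y) = \mu\left( (\mu')^{-1}(y) \right)$ is the continuous image of a connected set, and hence is a closed connected subset of the line $\pi^{-1}(y)$, i.e.\ a closed interval.

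To conclude convexity of $\mu(M)$, I would use the elementary characterization that a closed connected subset $C \subseteq \mathbb{R}^n$ is convex if and only if $C \cap L$ is connected for every affine line $L$. Since $\mu(M)$ is closed by hypothesis and connected as the continuous image of $M$, only this line-intersection criterion remains. For the residual family of lines $L = \pi^{-1}(y)$, $y \in Y$, the criterion has been verified. For a general line $L_{pq}$ spanned by $p, q \in \mu(M)$, I would approximate it by ``good'' lines $L_k$, constructed by simultaneously varying the projection within the (residual, by the measure-zero arguments in Lemma \ref{good projection}) set of good projections and the position within the residual set $Y$, and arrange that each $L_k$ meets $\mu(M)$ in points $p_k \to p$ and $q_k \to q$. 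Each intersection $L_k \cap \mu(M)$ is then a closed interval containing both $p_k$ and $q_k$, so $[p_k, q_k] \subseteq \mu(M)$; passing to the limit and using that $\mu(M)$ is closed gives $[p, q] \subseteq \mu(M)$.

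The principal obstacle is this last approximation step: given arbitrary $p, q \in \mu(M)$, one must simultaneously (i) perturb the direction of $L_{pq}$ into a good direction (outside the measure-zero exceptional set), (ii) perturb its position so that the result lies in the residual $Y$, and (iii) ensure that the perturbed line actually meets $\mu(M)$ in points close to $p$ and $q$. Condition (iii) is the delicate part, since nearby lines need not a priori hit $\mu(M)$ near prescribed points. A clean way to handle this is to leverage the local normal form from Theorem \ref{locallinearizationsymplectic} at each fixed point (and a corresponding slice-type description elsewhere) to establish local convexity of $\mu(M)$ at every point of $\mu(M)$; one may then bypass the global approximation step by invoking the Tietze-Nakajima theorem that a closed, connected, locally convex subset of $\mathbb{R}^n$ is convex.
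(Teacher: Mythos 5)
Your reduction to slices is essentially the paper's ``Case 1'': a good projection $\pi$ from Lemma \ref{good projection} together with $(A_{n-1})$ gives, for each value $y$ in a residual set, a connected level set $(\mu')^{-1}(y)$ whose $\mu$-image is a connected (and, by hypothesis, closed) subset of the line $\pi^{-1}(y)$, so any two points of $\mu(M)$ lying on such a good line are joined by a segment inside $\mu(M)$. The problem is the globalization, and there your proposal has a genuine gap. The Tietze--Nakajima route requires local convexity of $\mu(M)$ at every point of the image; this is nowhere established in the thesis and does not follow from Theorem \ref{locallinearizationsymplectic} alone, since that theorem gives a normal form only at fixed points of the torus action, and the ``corresponding slice-type description elsewhere'' you invoke is not available in this text. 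Local convexity of the momentum image is itself a substantial theorem (it is the heart of the Guillemin--Sternberg approach) and cannot be imported as a bypass. Your alternative, approximating the line $L_{pq}$ by good lines, founders on exactly the point you flag as (iii): nothing guarantees that a nearby good line meets $\mu(M)$ at all, let alone in points converging to $p$ and $q$.

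The paper avoids both difficulties by perturbing points of $M$ rather than lines in $\mathbb{R}^n$: given $x_0, x_1 \in M$ with $\mu(x_0)=p$ and $\mu(x_1)=q$, it approximates them by points $x_0', x_1' \in M$ such that $\mu(x_1')-\mu(x_0') \in \ker(A^T)$ for a good projection $A^T$ and such that $A^T\mu(x_0')$ is a regular value lying in the residual set furnished by $(A_n)$. Because $x_0'$ and $x_1'$ are actual points of $M$, their images automatically lie in $\mu(M)$ and on the same good line, so the Case 1 argument applies and gives $[\mu(x_0'),\mu(x_1')] \subseteq \mu(M)$; letting $x_0' \to x_0$ and $x_1' \to x_1$ and using the closedness of $\mu(M)$ yields $[p,q] \subseteq \mu(M)$. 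To close your argument you would need to carry out the perturbation on the domain side in this way (or genuinely prove local convexity); as written, the proposal does not complete the general case.
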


\begin{proof}

	Note that $(B_1)$ trivially holds; For an almost periodic $\mathbb{R}$ action the momentum mapping $\mu \colon M \rightarrow \mathbb{R}$ is continuous.  Since $M$ is connected, it follows that $\mu(M) \subset \mathbb{R}$ is connected; $\mu(M)$ is an interval.  But connectedness is convexity in $\mathbb{R}$.  Therefore, $(B_1)$ is true.

	We want to show that $(B_{n+1})$ is true, i.e., we want to show that given any two distinct points in $\mu(M) \subset \mathbb{R}^{n+1}$ then the line segment joining them is also in $\mu(M)$. This proof follows the method of McDuff and Salamon \cite{MS98}.

		\noindent \textbf{Case 1}: \textit{The ``regular value'' case } \ 
	
	Choose an injective matrix  $A \in \mathbb{R}^{(n+1) \times n}$ such that (good projection) $\pi := A ^T \colon \mathbb{R}^{n+1} \rightarrow \mathbb{R}^n$ satisfies conditions (i) and (ii) of Lemma \ref{good projection} and such that $c' \in \mathbb{R}^n$ is a regular value of the restricted momentum map and is in the (residual) set  of values for which the restricted momentum map is connected.  Consider the restricted almost periodic $\mathbb{R}^n$ action on $M$.  This action is Hamiltonian with momentum map $\mu_A := A^T \circ \mu \colon M \rightarrow \mathbb{R}^n$.
	$$ \xymatrix{
                            & M \ar[r]^{\mu} 
                             \ar@{-->}[dr]_{\mu_A}
                            & \mathbb{R}^{n+1}            \ar[d]^{A^T}   \\
&& \mathbb{R}^{n}
} $$ \		
	
	Choose $x_0' \in M$ such that it is in the $c'$ level set of $\mu_A$. 
	Notice that $x \in \mu_A^{-1}(c') \Leftrightarrow A^T \mu_A(x) = c' = A^T \mu_A(x_0').$
\noindent Therefore the set $\mu_A^{-1}(c')$ can be written in the form 
$$\mu_A^{-1}(c') = \{ x \in M \hspace{2mm} | \hspace{2mm} \mu(x) - \mu(x_0') \in \ker(A^T) \}.$$

By assumption, $\mu_A^{-1}(c')$ is connected, in fact path connected.  

Let $x_1' \in \mu_A^{-1}(c')$ be another point in the same level set.   If $\mu(x_1') - \mu(x_0') \in \ker(A^T)$ then every convex combination of $\mu(x_0')$ and $\mu(x_1')$ is in $\mu(M)$.  We provide the details:  

\noindent Let $\gamma \colon [0,1] \rightarrow \mu^{-1}_A(c')$ with $\gamma(0) = x_0'$, $\gamma(1)=x_1'$ be the path connecting $x_0'$ and $x_1'$.  Observe that dim$\left( \ker(A^T) \right) = 1$ because $A$ is injective by hypothesis.  This implies that $A^T$ is surjective.  Then $\mu \left( \gamma (t) \right) - \mu(x_0') \in \ker(A^T)$ for each $t \in [0,1]$.  Hence, every convex combination of $\mu(x_0')$ and $\mu(x_1')$ must lie in $\mu(M)$, thus completing the proof of Case $1$.
	
		\begin{figure}[htpb]
	\begin{center}
	\resizebox{12cm}{6.8cm}{\includegraphics{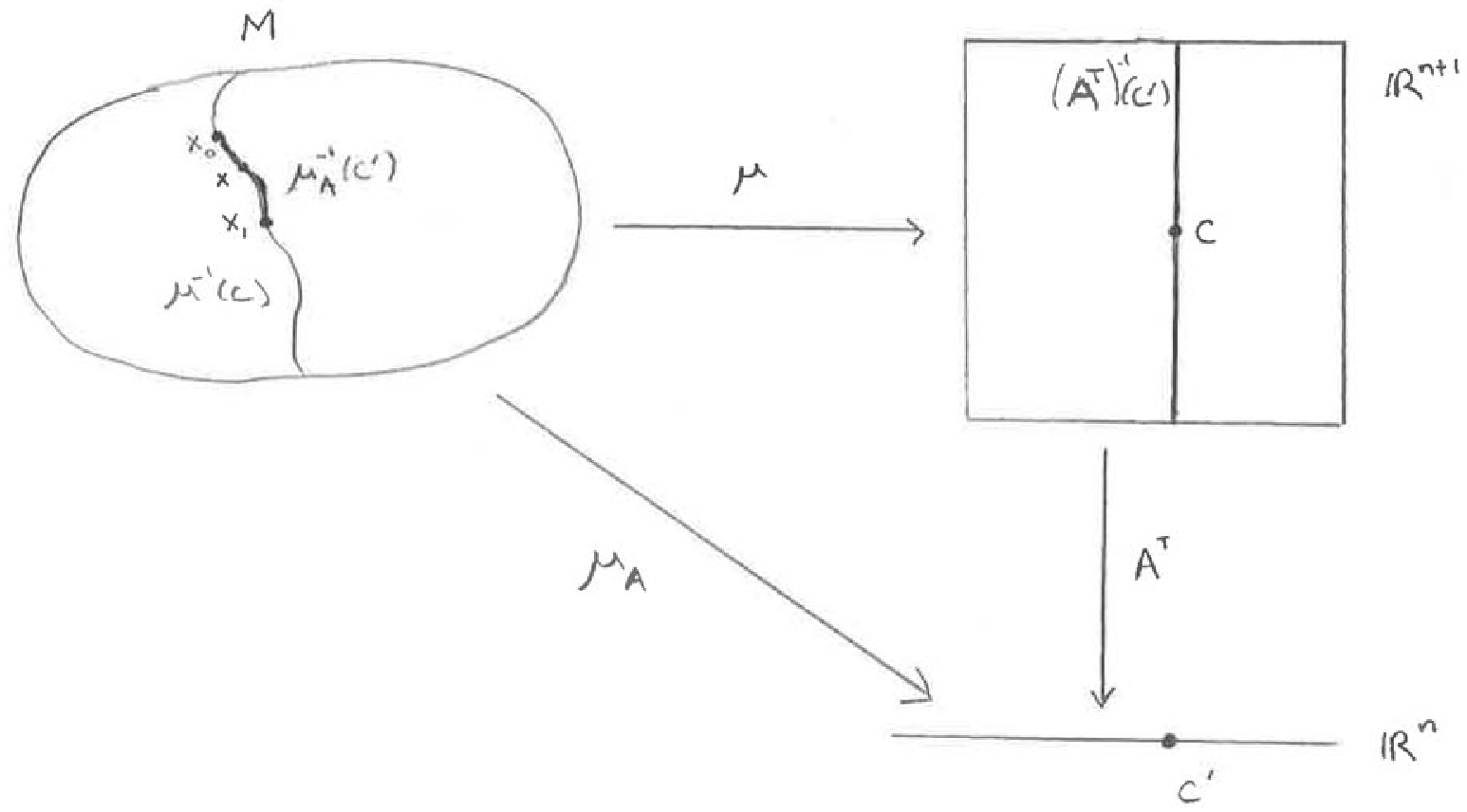}}
	\end{center}
	\caption{}
	\label{fig:ker(A^T)}
	\end{figure}
	
	\noindent \textbf{Case 2}: \textit{The ``general'' case } \
	
	Let $x_0$, $x_1$ be distinct arbitrary points in $M$.  
		
	We claim that $x_0$ and $x_1$ can be approximated arbitrarily closely by points $x_0'$, $x_1'$ with the property that $\mu(x_1') - \mu(x_0') \in \ker(A^T)$ for some injective matrix $A \in \mathbb{R}^{(n+1) \times n}$ such that $\pi := A^T$ satisfies conditions (i) and (ii) of Lemma \ref{good projection}.  With a further perturbation we may assume that $A^T \mu(x_0')$ is a regular value of $\mu_A$ and is in the (residual) set of values for which the level set of $\mu_A$ is connected  (by applying hypothesis ($A_n$)  to $\mu_A$).  To see this, first recall that the set of regular values of $\mu$ is residual in $\mathbb{R}^{n+1}$ by Theorem \ref{reg values residual}.  But a residual set in a complete metric space (such as $\mathbb{R}^{n+1}$ ) is dense in $\mathbb{R}^{n+1}$.  It follows that the set of regular values of $\mu$ is dense in $\mu(M)$.  By a similar argument applied to $\mu_A$ it can be established that the set of regular values of $\mu_A$ is dense in $\mu_A(M)$; Note that our assumptions imply that this restricted almost periodic $\mathbb{R}^n$ action on $M$ with momentum map $\mu_A$  satisfies conditions $(i)$ and $(ii)$ of Lemma \ref{good projection} (in particular, $\mu_A$ has isolated fixed points). Moroever, note that the intersection of the image of $\mu_A$ with the residual set described in ($A_n$) is dense in the momentum image.

Now, by Case $1$, every convex combination of $\mu( x_{0}' )$ and $\mu(x_{1}' )$ lies in $\mu(M)$.  Then our convexity result follows; since the image of $\mu$ is closed, by taking limits as $x_{0}' \rightarrow x_0$ and $x_{1}' \rightarrow x_1$ we obtain that $(1-t)\mu(x_0) + t \mu(x_1) \in \mu(M)$ \noindent for all $ 0 \leq t \leq 1$.

Taken as a whole, the statement ($B_{n}$) holds.
\end{proof}

	Our main result is the following.

\begin{thm}[Connectivity Theorem]
	\label{connectedness}

	Let $M$ be a connected strongly symplectic Hilbert manifold.  Suppose that we have an almost periodic $\mathbb{R}^n$ action on $M$ with momentum map $\mu : M \rightarrow \mathbb{R}^n$.  Suppose that the $\mathbb{R}^{n}$ action has isolated fixed points.  Suppose that there exists a complete invariant Riemannian metric on $M$ such that there exists a hyperplane $H$ of $\mathbb{R}^n$ such that for all $\xi \in \mathbb{R}^n \smallsetminus H$ the map  $\mu^{\xi} \colon \textrm{M} \rightarrow \mathbb{R}$ is bounded from one side and satisfies Condition (C).  Then the momentum mapping $\mu$ satisfies
	
	\begin{list}{}{}
	\item[($A$)] The set $\{ c \in \mathbb{R}^n \hspace{2mm} | \hspace{2mm} c \textrm{ is a regular value of $\mu$ and } \mu^{-1}(c) \textrm{ is connected } \} \subseteq \mathbb{R}^n$ is residual.

	\end{list}
	
\end{thm}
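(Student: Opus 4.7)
The plan is to establish $(A_n)$ by induction on $n$. For the base case $n=1$, the hypothesis forces $\mu$ (or $-\mu$) to be bounded from below and to satisfy Condition~(C); the isolated fixed points hypothesis makes $\mu$ a Morse function; Theorem~\ref{even index} ensures that no critical point of $\mu$ has index or coindex equal to~$1$; so Theorem~\ref{connectedlevels} gives that every level set of $\mu$ is connected. Combined with Theorem~\ref{reg values residual} (which shows that the regular values of $\mu$ form a residual set), the set appearing in $(A_1)$ coincides with the full set of regular values of $\mu$ and is therefore residual. This is essentially the combination of Corollary~\ref{base case} and Theorem~\ref{reg values residual}.

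For the inductive step, I would assume $(A_n)$; by Lemma~\ref{a_n B_n} we may then freely use $(B_n)$ as well. Given the data for an $\mathbb{R}^{n+1}$-action, apply Lemma~\ref{good projection} to produce a projection $\pi\colon\mathbb{R}^{n+1}\to\mathbb{R}^n$ for which $\mu':=\pi\circ\mu$ generates an almost periodic $\mathbb{R}^n$-action with isolated fixed points and which is bounded from one side and satisfies Condition~(C) off some hyperplane $H'\subset\mathbb{R}^n$; arrange further that $\ker(\pi)\not\subset H$ (the exceptional set of $\pi$ has measure zero inside the good projections), and fix $\xi\in\ker(\pi)\smallsetminus H$. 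The inductive hypothesis applied to $\mu'$ yields a residual set $R'\subset\mathbb{R}^n$ of regular values with $N_{c'}:=(\mu')^{-1}(c')$ connected. For $c'\in R'$, the $\mathbb{R}^{n+1}$-action preserves $N_{c'}$ (since the momentum map of an abelian Hamiltonian action is action-invariant), and the restricted function $f:=\mu^\xi|_{N_{c'}}$ corresponds, after passing to the symplectic reduction $\overline{N}_{c'}:=N_{c'}/T'$ by the subtorus $T'\subset T$ generated by the image of $\mathbb{R}^n\to T$, to the momentum map $\bar f$ of an induced almost periodic $\mathbb{R}$-action on the strongly symplectic quotient. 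After a further thinning of $R'$ to ensure that $T'$ acts freely on $N_{c'}$, Corollary~\ref{base case} applied to $(\overline{N}_{c'},\bar f)$ gives that every level set of $\bar f$ is connected; because $T'$-orbits are connected, this lifts to connectedness of every level set of $f$ on $N_{c'}$, and these level sets are precisely the fibers $\mu^{-1}(c)$ with $\pi(c)=c'$. Intersecting the residual set $\pi^{-1}(R')\subset\mathbb{R}^{n+1}$ with the residual set of regular values of $\mu$ from Theorem~\ref{reg values residual} produces the residual set required by $(A_{n+1})$.

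The main obstacle will be the symplectic-reduction step: one must verify that $\overline{N}_{c'}$ is a strongly symplectic Hilbert manifold with complete induced metric, that $\bar f$ inherits both the one-sided boundedness and Condition~(C) from $\mu^\xi$, and that the thinning of $R'$ to values at which $T'$ acts freely (as opposed to merely locally freely) preserves residuality. These checks form an infinite-dimensional analogue of standard reduction theory and none of them is automatic. A cleaner alternative would be a direct Morse--Bott argument on $N_{c'}$ itself, using Theorem~\ref{locallinearizationsymplectic} to analyze the local model of $f$ near each critical $T'$-orbit and noting that the transverse index and coindex are even (hence never~$1$), thereby bypassing the freeness issue; this route, however, requires infinite-dimensional Morse--Bott theory not developed in the earlier chapters.
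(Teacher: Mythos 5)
Your base case and the overall inductive scaffolding (good projection via Lemma \ref{good projection}, induction hypothesis applied to $\mu'=\pi\circ\mu$, connectedness of the fibers of the remaining component over the connected level $N_{c'}$, then intersection with the regular values from Theorem \ref{reg values residual}) match the paper. The genuine gap is in how you get connectedness of the fibers of $\mu^{\xi}|_{N_{c'}}$. The paper does \emph{not} pass to the symplectic quotient $N_{c'}/T'$; it works directly on the submanifold $N_{c'}$ and proves, as a standalone proposition inside the induction step, that $\mu_{k+1}|_{N_{c'}}$ is itself a Morse function none of whose critical points have index or coindex equal to one. The mechanism is a Lagrange-multiplier argument: a critical point of $\mu_{k+1}|_{N}$ is a critical point on all of $M$ of $\phi=\mu_{k+1}+\sum_i\lambda_i\mu_i$, Theorem \ref{even index} gives even index and coindex for $\phi$ on $M$, and a transversality argument (the Hamiltonian vector fields $X_{\mu_i}$ are tangent to $C=Crit(\phi)$, and $C$ is symplectic, so the $d\mu_i$ remain independent on $T_xC$) shows that $C$ meets $N$ transversally, whence $\phi|_N$ is Morse and the parity of index and coindex survives restriction. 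Theorem \ref{connectedlevels} then applies directly to $N_{c'}$. This is exactly the ingredient your proposal is missing, and it is what makes the whole quotient construction unnecessary.

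Your replacement step, reduction by $T'$ followed by Corollary \ref{base case} on $\overline{N}_{c'}$, does not go through as stated, and the obstacles you list are not mere technicalities. Regularity of $c'$ gives only discrete stabilizers on $N_{c'}$, not freeness, so $\overline{N}_{c'}$ is at best an orbifold; there is no argument that the set of $c'$ over which $T'$ acts freely is residual (it can be empty); no infinite-dimensional reduction theorem is available in the paper to give $\overline{N}_{c'}$ the structure of a strongly symplectic Hilbert manifold with a complete metric for which $\bar f$ satisfies Condition (C); and Corollary \ref{base case} additionally requires the induced $\mathbb{R}$-action downstairs to have isolated fixed points, which you do not address. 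Your suggested Morse--Bott alternative is closer in spirit to the paper, but as you note it invokes theory the thesis does not develop, whereas the paper's Lagrange-multiplier proposition shows the restricted function is honestly Morse, so ordinary Theorem \ref{connectedlevels} suffices. You should also note the paper's preliminary dichotomy (reducible versus irreducible $\mu$), which disposes of the degenerate case where some $d\mu_i$ is dependent on the others before the projection argument begins.
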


\begin{thm}[Convexity Theorem]
	\label{convexity}
	
	Let $M$ be a connected strongly symplectic Hilbert manifold.  Suppose that we have an almost periodic $\mathbb{R}^n$ action on $M$ with momentum map $\mu : M \rightarrow \mathbb{R}^n$.  Suppose that the $\mathbb{R}^{n}$ action has isolated fixed points and suppose that $\mu(M)$ is closed.  Suppose that there exists a complete invariant Riemannian metric on $M$ such that there exists a hyperplane $H$ of $\mathbb{R}^n$ such that for all $\xi \in \mathbb{R}^n \smallsetminus H$ the map  $\mu^{\xi} \colon \textrm{M} \rightarrow \mathbb{R}$ is bounded from one side and satisfies Condition (C).  Then the momentum mapping $\mu$ satisfies
	
	\begin{list}{}{}
	
	\item[($B$)] the image $\mu(M)$ is convex.
	
	\end{list}
	
\end{thm}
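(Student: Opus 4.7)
The plan is to deduce the Convexity Theorem from the Connectivity Theorem via Lemma \ref{a_n B_n}, which already establishes that if statement $(A_n)$ holds for every $n$, then statement $(B_n)$ holds for every $n$. Thus it suffices to prove the Connectivity Theorem, statement $(A)$, which I will establish by induction on $n$, in the spirit of Atiyah's argument in \cite{MA82}. The base case $n=1$ is immediate: with the hyperplane $H = \{0\}$, the hypothesis says $\mu$ or $-\mu$ is bounded from below and satisfies Condition (C); by Theorem \ref{even index}, the Morse function $\mu^\xi$ has no critical point of index or coindex one, so Theorem \ref{connectedlevels} yields that \emph{every} level set of $\mu$ is connected (this is Corollary \ref{base case}), and the set of regular values is residual by Theorem \ref{reg values residual}.

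For the inductive step, assume $(A_n)$ and let $\mu : M \to \mathbb{R}^{n+1}$ satisfy the hypotheses of $(A_{n+1})$. Apply Lemma \ref{good projection} to choose a projection $\pi : \mathbb{R}^{n+1} \to \mathbb{R}^n$ such that $\mu' := \pi \circ \mu$ generates an almost periodic $\mathbb{R}^n$-action with isolated fixed points and has a hyperplane $H' \subset \mathbb{R}^n$ outside of which the components are bounded from one side and satisfy Condition (C). By the inductive hypothesis $(A_n)$, the set $R_n$ of regular values $c' \in \mathbb{R}^n$ of $\mu'$ for which $(\mu')^{-1}(c')$ is connected is residual. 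For any $c' \in R_n$, the level set $N := (\mu')^{-1}(c')$ is a connected closed submanifold of $M$. Choose $\xi \in \mathbb{R}^{n+1}$ spanning $\ker(\pi)$ outside $H$, and consider $f := \mu^\xi \restriction N$. The aim is to apply Theorem \ref{connectedlevels} to $f$ on $N$: the restricted Riemannian metric is complete because $N$ is closed in $M$; $f$ is bounded from one side because $\mu^\xi$ is; and $\mathrm{Crit}(f)$ consists of the intersection of $N$ with the fixed point set of the generated torus $T$, since any critical point of $f$ on $N$ corresponds to an $\mathbb{R}^{n+1}$-fixed point by the argument of Lemma \ref{rationally independent}. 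The index/coindex of each such critical point is even by the symplectic normal form produced in the proof of Theorem \ref{even index}, applied to the symplectic subspace $T_pN \subset T_pM$ (which is symplectic because $d\mu'_p$ is a submersion and $N$ is a regular level set of a momentum map for a subaction). Once these hypotheses are verified, Theorem \ref{connectedlevels} gives that every level set of $f$ on $N$ is connected; but a level set of $f$ at value $c_{n+1}$ is precisely $\mu^{-1}(c)$ where $c \in \mathbb{R}^{n+1}$ is the point with $\pi(c) = c'$ and $\xi$-component $c_{n+1}$.

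To complete $(A_{n+1})$, one verifies that the set of $c \in \mathbb{R}^{n+1}$ that are regular values of $\mu$ with $\pi(c) \in R_n$ is residual in $\mathbb{R}^{n+1}$: by Theorem \ref{reg values residual} the regular values of $\mu$ form a residual set, and $\pi^{-1}(R_n)$ is residual because $\pi$ is an open surjective linear map (preimages of dense open sets under an open continuous surjection are dense open). The intersection of two residual sets is residual, so $(A_{n+1})$ follows, and Lemma \ref{a_n B_n} then delivers $(B_{n+1})$ once $(A_n)$ has been proven for all $n$. The main obstacle is verifying that the restriction $f = \mu^\xi \restriction N$ satisfies the hypotheses of the Connected Levels Theorem \ref{connectedlevels}, in particular that (a) Condition (C) is inherited by the restriction to $N$ (which should follow because $\mu'$ is a submersion near $N$, so Palais-Smale sequences for $f$ on $N$ are also Palais-Smale sequences for $\mu^\xi$ on $M$ once one controls the component of $\nabla \mu^\xi$ normal to $N$), and (b) the critical points of $f$ are strongly nondegenerate with even index and coindex, which requires a careful equivariant normal form argument at each fixed point to show that the Hessian of $\mu^\xi$ restricted to $T_pN$ remains nondegenerate and splits into complex (hence even-dimensional) weight spaces under the generated torus action, using the $T$-invariant $\omega$-compatible complex structure from Lemma \ref{complex structure}.
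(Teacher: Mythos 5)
Your overall strategy is exactly the paper's: deduce the Convexity Theorem by feeding statement $(A_n)$ for all $n$ into Lemma \ref{a_n B_n}, and prove $(A_n)$ by induction, with Corollary \ref{base case} as the base case, Lemma \ref{good projection} supplying the projection, the Connected Levels Theorem \ref{connectedlevels} applied to the remaining momentum component restricted to $N = (\mu')^{-1}(c')$, and residuality of $\pi^{-1}(G_{\mu'})$ intersected with the regular values of $\mu$ (Theorem \ref{reg values residual}) to close the induction. (The paper additionally splits the induction step into the cases $\mu$ reducible and irreducible, disposing of the reducible case by dropping a dependent component; that omission on your part is minor.)

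There is, however, one concrete misstep in your verification of the hypotheses of Theorem \ref{connectedlevels}: you assert that $Crit(\mu^{\xi}|_N) = N \cap M^{T}$. This cannot be right. A fixed point of the full action is a critical point of \emph{every} component of $\mu$, so a regular level $N$ of $\mu'$ contains no point of $M^{T}$ at all; your description would force $\mu^{\xi}|_N$ to have empty critical set, contradicting the fact that a function bounded below and satisfying Condition (C) on a complete manifold attains its minimum. The correct identification, and the one the paper uses, is by Lagrange multipliers: $x \in N$ is critical for $\mu^{\xi}|_N$ if and only if $d\mu^{\xi}_x + \sum_{i=1}^{k} \lambda_i \, d\mu_i|_x = 0$ for some $\lambda \in \mathbb{R}^{k}$, i.e.\ if and only if $x \in Crit(\phi)$ for $\phi = \langle \mu, (\lambda,1)\rangle$. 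The set $C = Crit(\phi)$ is the fixed-point set of a possibly \emph{proper} subtorus, hence in general a positive-dimensional symplectic submanifold rather than a collection of isolated points. The paper's Proposition then shows $C$ is transverse to $N$ (using that $\omega$ is nondegenerate on $T_xC$ and that the Hamiltonian fields $X_{\mu_i}$ are tangent to $C$), deduces that $\phi|_N$ is Morse with critical set $C \cap N$, and only then invokes Theorem \ref{even index} for the evenness of index and coindex. Your closing paragraph gestures at the right nondegeneracy and evenness verification, but as written the critical-set identification would derail the argument; the transversality step is the missing ingredient. (You are right that the inheritance of Condition (C) by the restriction to $N$ also needs an argument; the paper passes over that point silently.)
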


 \begin{rem}
The Convexity Theorem, Theorem \ref{convexity}, applies to finite-dimensional connected symplectic manifolds but eliminates the compactness assumption in the Atiyah-Guillemin-Sternberg Convexity Theorem \ref{finite convexity}. 

 \end{rem}

	We are ready to prove the main result of this thesis, the Connectivity Theorem \ref{connectedness}.

\begin{proof}[\textbf{Proof of Theorem \ref{connectedness}}]

	Consider the statement 
\begin{list}{}{}	
	\item{($A_n$):} Let $M$ be a connected strongly symplectic Hilbert manifold.  Suppose that we have an almost periodic $\mathbb{R}^n$ action on $M$ with momentum map $\mu \colon M \rightarrow \mathbb{R}^n$.  Suppose that the $\mathbb{R}^{n}$ action has isolated fixed points. Suppose that there exists a complete invariant Riemannian metric on $M$ such that there exists a hyperplane $H$ of $\mathbb{R}^n$ such that for all $\xi \in\mathbb{R}^n \smallsetminus H$ the map  $\mu^{\xi} \colon \textrm{M} \rightarrow \mathbb{R}$ is bounded from one side and satisfies Condition (C).  Then the set $$\{ c \in \mathbb{R}^n \hspace{2mm} | \hspace{2mm} c \textrm{ is a regular value of $\mu$ and } \mu^{-1}(c) \textrm{ is connected } \} \subseteq \mathbb{R}^n$$ is residual.
\end{list}

	\noindent Notice that $(A_n)$ applies to all $M$ and every $\mu$ on $M$.  By Lemma \ref{a_n B_n}, it is sufficient to prove statement ($A_n$) holds for all $n \in \mathbb{N}$.  We proceed by induction on $n$.
	
	Base Case:  In the case $n=1$, we have an almost periodic $\mathbb{R}$-action and thus $\mathfrak{t} = \mathbb{R}$ and $\mathfrak{t}^* = \mathbb{R}$, hence the momentum mapping $\mu \colon M \rightarrow \mathbb{R}$ is a smooth $\mathbb{R}$-valued function.  By Corollary \ref{base case}, $\mu^{-1}(c)$ is connected for every $c \in \mathbb{R}$, i.e., the set $$\{ c \in \mathbb{R} \hspace{2mm} | \hspace{2mm} \mu^{-1}(c) \textrm{ is connected } \} = \mathbb{R}.$$  Then the base case $(A_1)$ holds because the set $\{ c \in \mathbb{R} \hspace{2mm} | \hspace{2mm} c \textrm{ is a regular value of $\mu$ and } \mu^{-1}(c) \textrm{ is connected } \}$ is residual.  
	
	Induction Step:  Let $k \in \mathbb{N}$ be arbitrary.  Assume that $(A_k)$ is true for all possible almost periodic $\mathbb{R}^k$ actions on $M$ and let $\mu_1, \mu_2, \ldots , \mu_{k+1}$ be the components of a momentum mapping $\mu \colon M \rightarrow \mathbb{R}^{k+1}$ satisfying the hypothesis conditions of Theorem \ref{convexity}.  We want to show that $(A_{k+1})$ is true.  We have two cases to consider: \\
	1.  $\mu$ is reducible; and \\
	2. $\mu$ is irreducible.
	
	We say that $\mu$ is said to be \textbf{irreducible} if the $1$-forms $d\mu_1, d\mu_2, \ldots , d\mu_{n+1}$ are linearly independent, i.e.,  $$\alpha_1 d\mu_1 (m)(v) + \cdots \alpha_{n+1} d \mu_{n+1}(m)(v) = 0$$ \noindent at all points $m \in M$ and all vectors $v \in T_mM$ if and only if $\alpha_1 = \cdots = \alpha_{n+1} =0$.  We say that $\mu$ is \textbf{reducible} otherwise.
	  
 	If $\mu$ is reducible, then we are finished;  in this case there exists an $i \in \mathbb{N}$, $1 \leq i \leq k+1$, such that $d \mu_i$ is a linear combination of the other $1$-forms.  So we can drop $d \mu_i$ and apply our inductive hypothesis.  Thus, by the induction hypothesis the set of $c \in \mathbb{R}^{k+1}$ such that $c$ is a regular value of $\mu$ and $\mu^{-1}(c)$ is connected, is residual in $\mathbb{R}^{k+1}$.
 	
 	Let us assume that $\mu$ is irreducible.
 	By Lemma \ref{good projection}, there exists a projection $\pi := A^T \colon \mathbb{R}^{k+1} \rightarrow \mathbb{R}^k$ such that the restricted momentum map $\mu' := \pi \circ \mu$ satisfies all of the properties $(i)$ and $(ii)$ in Lemma \ref{good projection}.  Fix such a projection $\pi$.  Let $$G_{\mu'} := \{ c' \in \mathbb{R}^{k} \hspace{2mm} | \hspace{2mm} c' \textrm{ is a regular value of $\mu'$ and } (\mu')^{-1}\left( c' \right)  \textrm{ is connected} \} \subseteq \mathbb{R}^k.$$ 	
 	$$ \xymatrix{
                            & M \ar[r]^{\mu} 
                             \ar@{-->}[dr]_{\mu'}
                            & \mathbb{R}^{k+1}            \ar[d]^{\pi}   \\
&& \mathbb{R}^{k}
} $$ \

	Notice that $\mu'$ is the momentum map of a restricted almost periodic $\mathbb{R}^k$ action on $M$.  Note that there exists a basis of $\mathbb{R}^{k+1}$ so that $\pi$ drops the last coordinate.  Without loss of generality we may assume this is the standard basis.

  Let $c = ( c_1, \ldots , c_{k+1} ) \in \mathbb{R}^{k+1}$.  Consider $N := \mu^{-1}_1(c_1) \cap \cdots \cap \mu^{-1}_k(c_k)$.  Suppose that $c'$ is a regular value of $\mu'$.  It follows that: 
  
  	\begin{list}{}{}
	\item $\bullet$ the subset $N \subset M$ is a submanifold (of codimension $k$) in $M$ by the Implicit Function Theorem, and
	\item $\bullet$ the $1$-forms $(d\mu_i) (x)$, $1 \leq i \leq k$, are linearly independent for \textit{all} $x \in N$.
	\end{list}

\noindent Moreover, suppose that $\pi(c) \in G_{\mu'}$.  Then $N$ is connected by the definition of $G_{\mu'}$.
 	
 	\noindent Next, let us consider the restricted function $\mu_{k+1}|_N \colon N \rightarrow \mathbb{R}$.  
 
	\item {\bf Proposition}:  The function  $\mu_{k+1}|_N$ is a Morse function none of whose critical points have index or coindex equal to one in $N$.

	\begin{list}{}{}		
\item[\underline{Step 1}]:  \textit{We define a function $\phi \colon M \rightarrow \mathbb{R}$  and show that it has nondegenerate critical points of even index and coindex in $M$} \
 	
  Note that 
  given some $\lambda = ( \lambda_1, \ldots , \lambda_k) \in \mathbb{R}^k$ and $\mu'(x) = ( \mu_i(x), \ldots, \mu_k(x)) \in \mathbb{R}^k$ then $\langle \mu'(x), \lambda \rangle = \displaystyle \sum_{i=1}^k \lambda_i \mu_i(x)$.  Recall that a point $x \in N$ is a critical point of $\mu_{k+1}|_N$ if and only if there exist some constant $\lambda = ( \lambda_1, \ldots , \lambda_k ) \in \mathbb{R}^k$ such that $$d \mu_{k+1}(x)(v) + \sum_{i=1}^k \lambda_i d \mu_i(x)(v) = 0$$ for all $v \in T_xM$.  Therefore, $x$ is a critical point \textit{on $M$} for the function $\phi := \langle \mu, \lambda \rangle \colon M \rightarrow \mathbb{R}$ where $\lambda = (\lambda_1, \ldots, \lambda_k, 1 ) \in \mathbb{R}^{k+1}$.  That is, $$\phi = \mu_{k+1} + \sum_{i=1}^k \lambda_i \mu_i.$$   	Notice that $\phi$ is a Morse function because it has nondegenerate critical points (since $\mu_{k+1}$ has nondegenerate critical points and $\mu_{k+1}$ and $\phi$ differ only by the constant $\sum_{i=1}^k \lambda_i \mu_i$).  Thus, by Lemma \ref{even index} we know that no critical points of $\phi$ have index (coindex) equal to one in $M$.
 	
\item[\underline{Step 2}]:  \textit{Show that the restricted function $\phi|_N$ is a Morse function} \  	
  	
 Let $C := Crit(\phi) \subset M$ be the critical point set of $\phi$.  Let $x \in N$.  We wish to demonstrate that the manifold $C$ intersects $N$ transversally at $x$ (i.e. $T_xM = T_xC + T_xN$ ).  This means that the $1$-forms $d \mu_i(x) \colon T_xM \rightarrow \mathbb{R}$, $1 \leq i \leq k$, remain linearly independent when restricted to the subspace $T_xC$ (because this would show that the dual vector space to $T_xN + T_xC$ has the same codimension as $T_xM$ since the $d \mu_i(x)$, $ 1 \leq i \leq k$, vanish on $T_xN$).  Thus, it is sufficient to prove that $d \mu_i(x), \ldots , d \mu_k(x)$ remain linearly independent on $T_xC$.
 	
	To begin with observe that\
	\item $\bullet$ the vector fields $X_i := X_{\mu_i}$ (given by $d \mu_i = \iota_{X_i}$) for $ i = 1, \ldots , k$ must all lie tangent to $C$;\

 	We have that 
 	\begin{eqnarray*} 0 & = & d \mu_i (X_{\phi}) \\
								& = & \iota_{X_i} \omega(X_{\phi}) \\
								& = & \omega (X_i , X_{\phi}) \\
								& = & - \omega (X_{\phi} , X_i ) \\
								& = & - \iota_{X_{\phi}} \omega (X_i) \\
								& = & - d \phi( X_i). \
	\end{eqnarray*}
Thus, $\phi$ is constant on the level curves of $\mu_i$.  But then the Hamiltonian flow of $\mu_i$ must preserve $C$.  Therefore the (Hamiltonian) vector fields $X_i$ are tangent to $C$.

Thus $X_i(x) \in T_xC$ for $i = 1, \ldots , k$.
 	
 	\item $\bullet$ $\hspace{2mm} T_xC$ is a symplectic vector space;\
 	
 	$C$ is a symplectic submanifold of $M$ by Lemma \ref{rationally independent} because $C$ is a fixed point set of a torus action.  Therefore $T_xC$ is a symplectic vector space.
 	
 	This means that $\omega_x$ is nondegenerate on $T_xC$.  So for all $\lambda = ( \lambda_1, \ldots , \lambda_k) \in \mathbb{R}^k$ with not all $\lambda_i$ zero, there exists a nonzero vector $v \in T_xC$ such that
 	\begin{eqnarray*} 0 & \neq & \omega_x \left( \sum_{i=1}^k \lambda_i X_i(x) , v \right) \\
 								& = &  \sum_{i=1}^k \lambda_i \iota_{X_i(x)} \omega_x(v) \\
 								& = &  \sum_{i=1}^k \lambda_id \mu_i(x)(v). \
 	\end{eqnarray*}
 	Hence $d \mu_i(x)$, for $i = 1 , \ldots , k$, are linearly independent on $T_xC$.  Therefore $C$ is transverse to $N$.
 	
 	Now the fact that $T_xM = T_xN + T_xC$ implies that 
 	$\left( T_xC \right)^{\perp} \subseteq T_xN$.  From this notice that $H_x(\phi)$, the Hessian of $\phi$ at $x$, is nondegenerate on $T_xN \cap \left( T_xC \right)^{\perp}$ because $T_xM  \cap \left( T_xC \right)^{\perp} = T_xN \cap \left( T_xC \right)^{\perp}$ and so $$T_xN = T_xN \cap T_xC + T_xN \cap \left( T_xC \right)^{\perp}.$$  In particular, this means that the restricted function $\phi|_N \colon N \rightarrow \mathbb{R}$ is a Morse function with critical point set $C \cap N$.
 	  
 \item[\underline{Step 3}]:  \textit{Show that the function $\mu_{k+1}|_N$ has no critical points of index or coindex equal to one in $N$} \
 	
 	\indent Observe that by Lemma \ref{even index}, the function $\phi|_N$  has critical points of even index and coindex since $\phi|_N$ has nondegenerate critical points (by Step 2).  It then follows that $\mu_{k+1}|_N$ has nondegenerate critical points with even index and coindex because $\mu_{k+1}|_N$ only differs from $\phi$ by a constant, namely the constant $\sum_{i=1}^k \lambda_i c_i$, by definition of $\phi$.  This completes the proof of the proposition.
 	
 	\end{list}
 	
	By the proposition and by Theorem \ref{connectedlevels}, the level set of $\mu_{k+1}|_N$ is connected for \textit{every} $c_{k+1} \in \mathbb{R}$, i.e., $\left( \mu_{k+1}|_N \right)^{-1}(c_{k+1}) \subseteq N$ is connected for all $c_{k+1} \in \mathbb{R}$.  Hence $$\mu^{-1}(c) = N \cap \mu_{k+1}^{-1}(c_{k+1})$$ is connected 
for all $ c \in \pi^{-1}(c')$.  So the level set $\mu^{-1}(c)$ is connected for all $ c \in \pi^{-1} \left( G_{\mu'} \right).$  
But by the induction hypothesis 
the set $G_{\mu'}$ 
is residual in $\mathbb{R}^k$.  This implies that the set $$\pi^{-1} \left(  G_{\mu'} \right) \subseteq \mathbb{R}^{k+1}$$ is residual in $\mathbb{R}^{k+1}$ because $\pi^{-1} \left(  G_{\mu'} \right)$ is homeomorphic to $G_{\mu'} \times \mathbb{R}$. 
	
Let $G_{\mu} := \{ c \in \mathbb{R}^{k+1} \hspace{2mm} | \hspace{2mm} c \textrm{ is a regular value of $\mu$ and $\mu^{-1}(c)$ is connected } \} \subseteq \mathbb{R}^{k+1}.$ 	

By the definition of $G_{\mu'}$, the result just proven, and the definition of $G_{\mu}$, the set 
$$\pi^{-1} \left(  G_{\mu'} \right) \bigcap \left\lbrace \textrm{ regular values of $\mu$ } \right\rbrace \subseteq G_{\mu}.$$
It follows that $G_{\mu}$ is residual in $\mathbb{R}^{k+1}$.

\end{proof}

 \begin{proof}[\textbf{Proof of Theorem \ref{convexity}}]

This proof follows the method of Atiyah \cite{MA82} where $n = \textrm{dim}(\mathbb{R}^n)$.   Consider the statements ($A_n$) and ($B_n$) of Lemma \ref{a_n B_n}.

\noindent Then the statement ``image of $\mu$ is convex" holds if and only if ($B_n$) holds for all $n$.

Note that ($A_n$) holds for all $n$ by the Connectedness Theorem, Theorem \ref{connectedness}.  It follows that ($B_n$) holds for all $n$ by Lemma \ref{a_n B_n}.  Hence, the image $\mu(M)$ is convex.

\end{proof}

\begin{rem}
The results of the Connectivity Theorem \ref{connectedness} and the Convexity Theorem \ref{convexity} also apply to finite-dimensions where the manifold is not required to be compact or where the map is not required to be proper.
\end{rem}

\begin{rem}
We wonder whether the assumptions of our Connectedness Theorem, Theorem \ref{connectedness}, imply that the image of the momentum map is closed.  We do not know counterexamples.  Moreover, from Palais we know that for real-valued functions many consequences that follow from the image being closed are true.
\end{rem}

 \begin{rem}
 
 In light of the Connectivity Theorem \ref{connectedness} and the Convexity Theorem \ref{convexity}, directions for future research could include:
 
 \begin{list}{}{}
	\item[$\bullet \hspace{2mm}$] establishing connectivity of the level set $\mu^{-1}(c)$ for all regular values $c$ of the momentum map $\mu$;
	\item[$\bullet \hspace{2mm}$] establishing connectivity of the level set $\mu^{-1}(c)$ for all critical values $c$ of the momentum map $\mu$;
	\item[$\bullet \hspace{2mm}$] generalizing the Connectivity and Convexity Theorems so as to apply to Morse-Bott functions;
	\item[$\bullet \hspace{2mm}$] developing an infinite-dimensional non-abelian convexity result.
	
\end{list}

\end{rem}

\chapter{Example - The Based Loop Group}

	The purpose of this chapter is to provide examples of Theorem \ref{convexity}, the convexity main theorem.

\section{Example:  The Based Loop Group }

\subsubsection*{The Loop Group}  

	Let $G$ be a compact, connected and simply connected Lie group.  Fix a $G$-invariant inner product $\left\langle \cdot , \cdot \right\rangle$ on the Lie algebra $\mathfrak{g}$.  The \textit{loop group}, which we denote by $M_1$, is defined as the set of maps $S^1 \rightarrow G$ that are Sobolev class $H^1$.  Recall that a map $f \colon S^1 \rightarrow G$ is said to be \textit{Sobolev class $H^1$} if $f$ is absolutely continuous and $f^{-1}f' \in L^2(S^1, \mathfrak{g})$. 
	
	The space $M_1 = H^1(S^1, G)$ is an infinite-dimensional Hilbert manifold (cf. \cite[section $\S 13$]{RP63} and \cite[Section $\S 3$]{PS86}).  It carries a left invariant Riemannian metric, called the $H^1$ metric.  
The $H^1$ metric is uniquely determined by its restriction to the Lie algebra of $M_1$ which is $H^1(S^1, \mathfrak{g} )$ (the tangent space at the constant loop $e$ ).  That is, if we fix an $Ad(G)$-invariant metric, $( \cdot , \cdot )$, on $\mathfrak{g}$ then the $H^1$ metric is determined by 
	$$ \left\langle \gamma , \eta \right\rangle_e = \frac{1}{2 \pi} \int_0^{2\pi} \left( \gamma ( \theta) , \eta (\theta) \right) d \theta + \frac{1}{2 \pi} \int_0^{2 \pi} \left( \gamma'( \theta) , \eta' ( \theta) \right) d \theta ,$$

for $\gamma, \eta \in $Lie$(M_1) = H^1(S^1 , \mathfrak{g})$.

\subsubsection*{The Based Loop Group}	
	
	The  subset $\Omega G$ of $M_1$ consisting of those loops $f \colon S^1 \rightarrow G$ for which $f(1) \hspace{1mm} ( = e)$ is the identity element in $G$ is called the \textit{based loop group}.  Notice that $\Omega G$ is a closed submanifold of $M_1$ whose Lie algebra consists of those maps $\tilde{f} \colon S^1 \rightarrow \mathfrak{g}$ such that $\tilde{f}(1)=0$, i.e., $T_{\tilde{f}} \Omega G \cong H^1(S^1 , \mathfrak{g} ) / \mathfrak{g}$.  Moreover, the $H^1$ metric defined on $M_1$ induces a complete metric on $\Omega G$ (which we will denote by $\left\langle \cdot , \cdot \right\rangle$).  See Palais \cite[Section $\S 13$ Theorem $6$]{RP63}.  So $\Omega G$ is a connected Riemannian Hilbert manifold.

It can be seen (see \cite[Atiyah-Pressley, 
Section $\S 2$]{AP83}) that the formula $$\omega( \gamma, \eta ) = \frac{1}{2 \pi} \int_0^{2 \pi} \left\langle \gamma'( \theta), \eta ( \theta ) \right\rangle d \theta$$ \noindent where $\gamma, \eta \in H^1(S^1, \mathfrak{g} )$, defines a skew-symmetric bilinear form on $H^1(S^1, \mathfrak{g})$.  Moreover, $\omega$  is strongly nondegenerate.  Extending $\omega$ by left translations gives a left invariant closed $2$-form $\omega$ on $\Omega G$ (cf. \cite{PS86}, \cite[Section $\S 4$]{AP83}).  Thus, $\left( \Omega G, \omega \right)$ is strongly symplectic.

\subsubsection*{Group Actions on $\Omega G$}	

The rotation group $S^1$ acts on $\Omega G$ by ``rotating the loop":   \\ 
 		if $\gamma \in \Omega G$ and $e^{i \theta} \in S^1$, $\theta \in [0, 2 \pi]$, then $\left( e^{i \theta} \gamma \right)(s) := \gamma(s+ \theta) \gamma(\theta)^{-1}$.
 		
 		Let $T$ be the maximal torus of $G$.  Then $T$ acts on $\Omega G$ by conjugation:  \\ 
 		if  $\gamma \in \Omega G$ and $t \in T$, then $(t \gamma)(s) := t \gamma(s) t^{-1}$.
 		
 		Note that these actions commute and they are Hamiltonian \cite{PS86}.
 		
\begin{rem}
 The action $T \times S^1 \circlearrowright \Omega G$ is a special case of an almost periodic $\mathbb{R}^n$ action on $\Omega G$. 
\end{rem} 		
 		
 		The resulting $T \times S^1$ momentum map $\mu \colon \Omega G \rightarrow Lie( T \times S^1 ) \cong \mathfrak{t}^* \oplus \mathbb{R}^* \cong \mathfrak{t} \oplus \mathbb{R}$ is given by $\mu = p \oplus E$ with
  \begin{eqnarray*}
  E(f) & := & \frac{1}{4 \pi} \int_0^{2 \pi} || f( \theta )^{-1} f'(\theta) ||^2 \hspace{1mm} d \theta \hspace{11mm} \textrm{\textbf{Energy Functional}} \\
  p(f)	& := & pr_{\mathfrak{t}} \left( \frac{1}{2 \pi} \int_0^{2 \pi} \underbrace{f (\theta)^{-1} f'( \theta )}_{\in \mathfrak{g}} \hspace{1mm} d \theta \right) \hspace{5mm} \textrm{\textbf{Momentum Functional}} \
\end{eqnarray*}

\noindent where $pr_{\mathfrak{t}} \colon \mathfrak{g} \rightarrow \mathfrak{t}$ is the projection onto the Lie algebra of $T$.

\subsubsection*{Morse Theory for the Components of $\mu$}	

In this subsection we discuss the fact that a certain set of components of the momentum map $\mu \colon \Omega G \rightarrow \mathfrak{t} \oplus \mathbb{R}$ satisfy Condition (C) with respect to the $H^1$ metric.  

Note that the image of the momentum map $\mu = p \oplus E$ lies in $\mathfrak{t} \oplus \mathbb{R}$ which we can identify with its dual and with $\mathbb{R}^{N-1} \oplus \mathbb{R} \cong \mathbb{R}^N$.  Choose a hyperplane $H \subset \mathbb{R}^N$ such that $H = \{ x \in \mathbb{R}^N \hspace{1mm} | \hspace{1mm} x = ( 0 , x_2, \ldots , x_{N} ) \}$.  Then observe that for each $\xi \in \mathbb{R}^N \smallsetminus  H$ the $\mu^{\xi}$ component of the momentum map may be written as $$\displaystyle \mu^{\xi}(f) = x_1E(f) + \sum_{i=2}^N x_i p_i(f),$$ \noindent where $x_1 \neq 0$ and $f \in \Omega G$.  The fact that for each $\xi \in \mathbb{R}^N \smallsetminus  H$, $\mu^{\xi}$ is bounded from one side and satisfies Condition (C) follows from \cite[Proposition $2.9$]{HHJM06} whose proof relies on results of \cite{CLT89}.

\subsubsection*{Connectedness of Level Sets}

Let us briefly review what is known about the connectivity with regards to the based loop group.  

Recall that in \cite{HHJM06} Harada, Holm, Jeffrey, and Mare proved that any level set of the momentum map $\mu$ of the $T \times S^1$ action restricted to $\Omega_{\textrm{alg}}$ is connected (for regular or singular values of the momentum map) \ref{lisa 1}.  Note that the subset $\Omega_{\textrm{alg}}$ of $\Omega G$ could be equipped with the \textit{subspace topology} induced from the inclusion $\Omega_{\textrm{alg}} \hookrightarrow \Omega G$.  However, $\Omega_{\textrm{alg}}$ can also be equipped with a \textbf{direct limit topology} induced by the Grassmannian model (see \cite{HHJM06}, Section $\S 2$) for the algebraic loop group.  It turns out that the direct limit topology on $\Omega_{\textrm{alg}}$ is the appropriate topology for Theorem \ref{lisa 1}.  Harada, Holm, Jeffrey, and Mare also proved in \cite{HHJM06} that any level set of the momentum map $\mu$ for the $T \times S^1$ action on $\Omega G$ is connected provided that $c$ is a regular value of $\mu$ (with respect to the $H^1$ metric) \ref{lisa 2 regular}. 
In \cite{ALM10} Mare proved that the level set of $\mu^{-1}(c)$ of the momentum map for the $T \times S^1$ action on $\Omega G$ is connected for singular values of $\mu$.  His argument works for the space of $C^{\infty}$ loops and also for the space of loops of Sobolev class $H^s$ for any $ s \geq 1$.  

In terms of the results for this thesis, the Connectivity Theorem \ref{connectedness} establishes that in the presence of an almost periodic $\mathbb{R}^n$ action on $\Omega G$ (with momentum map $\mu$),  the set $\{ c \in \mathbb{R}^n \hspace{2mm} | \hspace{2mm} c \textrm{ is a regular value of $\mu$ and } \mu^{-1}(c) \textrm{ is connected } \} \subseteq \mathbb{R}^n$ is residual.

\subsubsection*{Convexity}

Let $R := T \times S^1$ act on $\Omega G$ as described above in the subsection ``Group Actions on $\Omega G$".  Atiyah and Pressley \cite{AP83}  showed in Theorem \ref{loop group convexity} that the image of the momentum map $\mu = p \oplus E$ is convex.  So the Convexity Theorem \ref{convexity} reproduces this known convexity result when $M = \Omega G$.

\addcontentsline{toc}{chapter}{Bibliography}
\nocite{*}
\bibliographystyle{plain}
\bibliography{ut-thesis}

\end{document}